\numberwithin{equation}{section}
\numberwithin{table}{section}
\newtheorem{em-deff}{Definition}[section]
\newtheorem{lemma}[em-deff]{Lemma}
\newtheorem{theorem}[em-deff]{Theorem}
\newtheorem{corollary}[em-deff]{Corollary}
\newtheorem{proposition}[em-deff]{Proposition}
\newtheorem{em-example}[em-deff]{Example}
\newtheorem{em-remark}[em-deff]{Remark}
\newtheorem{question}[em-deff]{Question}
\newenvironment{example}{\begin{em-example} \em }{ \end{em-example}}
\newenvironment{deff}{\begin{em-deff} \em }{ \end{em-deff}}
\newcommand{\bigslant}[2]{{\raisebox{.3em}{$#1$}\left/\raisebox{-.3em}{$#2$}\right.}}
\newcommand{\FF}{\mathbb F}
\newcommand{\K}{\mathbb K}
\newcommand{\N}{\mathbb N}
\newcommand{\Z}{\mathbb Z}
\newcommand{\B}{\mathcal{B}}
\newcommand{\BV}{\B(V)}
\newcommand{\F}{\mathbb{F}}
\newcommand{\ca}[1]{\mathcal{#1}}
\def\f{\phi}
\def\ent{\mathrm{ent}}
\def\End{\mathrm{End}}
\def\End{\mathrm{End}}
\def\Im{\mathrm{Im}}
\title{Algebraic entropy in locally linearly compact vector spaces}
\author{Ilaria Castellano \and Anna Giordano Bruno}
\date{\emph{Dedicated to the 70eth birthday of Luigi Salce}}
\begin{document}

\maketitle

\abstract{We introduce the algebraic entropy for continuous endomorphisms of locally linearly compact vector spaces over a discrete field, as the natural extension of the algebraic entropy for endomorphisms of discrete vector spaces studied in \cite{GBSalce}. We show that the main properties of entropy continue to hold in the general context of locally linearly compact vector spaces, in particular we extend the Addition Theorem.}

%\bigskip
%
%\noindent\emph{2010 Mathematics Subject Classification: Primary: 15A03; 15A04; 22B05. Secondary: 20K30; 37A35.} 
%
%\smallskip
%\noindent\emph{Key words and phrases: linearly compact vector space, locally linearly compact vector space, algebraic entropy, continuous linear transformation, continuous endomorphism, algebraic dynamical system.} 

%\tableofcontents

\section{Introduction}\label{intro}

In \cite{AKM} Adler, Konheim and McAndrew introduced  the notion of topological entropy $h_{top}$ for continuous self-maps of compact spaces, and they concluded the paper by sketching a definition of the algebraic entropy $h_{alg}$ for endomorphisms of abelian groups. This notion of algebraic entropy, which is appropriate for torsion abelian groups and vanishes on torsion-free abelian groups, was later reconsidered by Weiss in \cite{Weiss}, who proved all the basic properties of $h_{alg}$. Recently, $h_{alg}$ was deeply investigated by Dikranjan, Goldsmith, Salce and Zanardo for torsion abelian groups in \cite{Aeag}, where they proved in particular the Addition Theorem and the Uniqueness Theorem.

Later on, Peters suggested another definition of algebraic entropy for automorphisms of abelian groups in \cite{Peters1}; here we denote Peters' entropy still by $h_{alg}$, since it coincides with Weiss' notion on torsion abelian groups; on the other hand, Peters' entropy is not vanishing on torsion-free abelian groups. In \cite{DGBpet} $h_{alg}$ was extended to all endomorphisms and deeply investigated, in particular the Addition Theorem and the Uniqueness Theorem were proved in full generality.
In \cite{Peters2} Peters gave a further generalization of his notion of entropy for continuous automorphisms of locally compact abelian groups, which was recently extended by Virili in  \cite{Virili} to continuous endomorphisms.

Weiss in \cite{Weiss} connected the algebraic entropy $h_{alg}$ for endomorphisms of torsion abelian groups with the topological entropy $h_{top}$ for continuous endomorphisms of totally disconnected compact abelian groups by means of Pontryagin duality. Moreover, the same connection was shown by Peters in \cite{Peters1} between $h_{alg}$ for topological automorphisms of countable abelian groups and $h_{top}$ for topological automorphisms of metrizable compact abelian groups. These results, known as Bridge Theorems, were recently extended to endomorphisms of abelian groups in \cite{DGB-BT}, to continuous endomorphisms of locally compact abelian groups with totally disconnected Pontryagin dual in \cite{GBD}, and to topological automorphisms of locally compact abelian groups in \cite{Virili-BT} (in the latter two cases on the Potryagin dual one considers an extension of $h_{top}$ to locally compact groups based on a notion of entropy introduced by Hood in \cite{H} as a generalization of Bowen's entropy from \cite{B} -- see also \cite{GBVirili}).

\medskip
A generalization of Weiss' entropy in another direction was given in \cite{SZ}, where Salce and Zanardo introduced the $i$-entropy $\ent_i$ for endomorphisms of modules over a ring $R$ and an invariant $i$ of $\mathrm{Mod}(R)$. For abelian groups (i.e., $\Z$-modules) and $i=\log|-|$, $\ent_i$ coincides with Weiss' entropy. Moreover, the theory of the entropies $\ent_L$ where $L$ is a length function was pushed further in \cite{SVV,SV}.

\smallskip
In \cite{GBSalce} the easiest case of $\ent_i$ was studied, namely, the case of vector spaces with the dimension as invariant, as an introduction to algebraic entropy in the most convenient and familiar setting.
The \emph{dimension entropy} $\ent_{\dim}$ is defined for an endomorphism $\phi:V\to V$ of a vector space $V$ as
$$\ent_{\dim }(\f)=\sup\{H_{\dim}(\f,F): F\leq V,\ \dim F<\infty\},$$
where $$H_{\dim }(\f,F)=\lim_{n\to\infty}\frac{1}{n}\dim(F+\phi F+\ldots+\phi^{n-1}F).$$
All the basic properties of $\ent_{\dim}$ were proved in \cite{GBSalce}, namely, Invariance under conjugation, Monotonicity for linear subspaces and quotient vector spaces, Logarithmic Law, Continuity on direct limits, weak Addition Theorem (see Section~\ref{ss:Properties} for the precise meaning of these properties). 

Moreover, compared to the Addition Theorem for $h_{alg}$ and other entropies, a simpler proof was given in \cite[Theorem 5.1]{GBSalce} of the Addition Theorem for $\ent_{\dim}$, which states that if $V$ is a vector space, $\phi:V\to V$ an endomorphism and $W$ a $\phi$-invariant (i.e., $\phi W\leq W$) linear subspace of $V$, then $$\ent_{\dim}(\phi)=\ent_{\dim}(\phi\restriction_W)+\ent_{\dim}(\overline \phi),$$ where $\overline\phi:V/W\to V/W$ is the endomorphism induced by $\phi$.

Also the Uniqueness Theorem is proved for the dimension entropy (see \cite[Theorem 5.3]{GBSalce}), namely  $\ent_{\dim}$ is the unique collection 
%$\ent_{\dim}=\{\ent_{\dim}^V:V\ \text{vector space}\}$ 
of functions $\ent_{\dim}^V:\End(V)\to\N\cup\{\infty\}$, $\phi\mapsto\ent_{\dim}(\phi)$,
satisfying for every vector space $V$: Invariance under conjugation, Continuity on direct limits, Addition Theorem and $\ent_{\dim}(\beta_F)=\dim F$ for any finite-dimensional vector space $F$, where $\beta_F:\bigoplus_\N F\to \bigoplus_\N F$, $(x_0,x_1,x_2,\ldots)\mapsto (0,x_0,x_1,\ldots)$ is the right Bernoulli shift.  

\medskip
Inspired by the extension of $h_{alg}$ from the discrete case to the locally compact one, and by the approach used in \cite{intrinsic} to define the intrinsic algebraic entropy, we extend the dimension entropy to continuous endomorphisms of locally linearly compact vector spaces. Recall that a linearly topologized vector space $V$ over a discrete field $\K$ is \emph{locally linearly compact} (briefly, l.l.c.) if it admits a local basis at $0$ consisting of linearly compact open linear subspaces; we denote by $\BV$ the family of all linearly compact open linear subspaces of $V$ (see \cite{Lef}).
Clearly, linearly compact and discrete vector spaces are l.l.c.. (See Section~\ref{s:llc} for some background on linearly compact and locally linearly compact vector spaces.)

Let $V$ be an l.l.c.\! vector space and $\f\colon V\to V$ a continuous endomorphism.  The \emph{algebraic entropy of $\phi$ with respect to $U\in\BV$} is
\begin{equation}\label{hdef}
H(\f,U)=\lim_{n\to\infty}\frac{1}{n}\dim\frac{U+\phi U+\ldots+\phi^{n-1}U}{U},
\end{equation}
and the \emph{algebraic entropy} of $\f$ is
\begin{equation*}%\label{eq:ent intro}
\ent(\f)=\sup\{ H(\f,U)\mid U\in\mathcal B(V)\}.
\end{equation*}
In Section \ref{s:ent} we show that the limit in \eqref{hdef} exists. Moreover, we see in Corollary~\ref{lc0} that $\ent$ is always zero on linearly compact vector spaces. On the other hand, if $V$ is a discrete vector space, then $\ent(\f)$ turns out to coincide with $\ent_{\dim}(\phi)$ (see Lemma~\ref{entdim}). Moreover, if $V$ is an l.l.c.\! vector space over a finite field $\mathbb F$, then $V$ is a totally disconnected locally compact abelian group and $h_{alg}(\phi)=\ent(\phi)\cdot\log|\mathbb F|$ (see Lemma~\ref{halg}).

\smallskip
In Section \ref{ss:Properties} we prove all of the general properties that the algebraic entropy is expected to satisfy, namely, Invariance under conjugation, Monotonicity for linear subspaces and quotient vector spaces, Logarithmic Law, Continuity on direct limits, weak Addition Theorem. As a consequence of the computation of the algebraic entropy for the Bernoulli shifts (see Example~\ref{bernoulli}), we find in particular that the algebraic entropy for continuous endomorphisms of l.l.c.\! vector spaces takes all values in $\N\cup\{\infty\}$.

\smallskip
In Section \ref{lff-sec} we prove the so-called Limit-free Formula for the computation of the algebraic entropy, that permits to avoid the limit in the definition in \eqref{hdef} (see Proposition~\ref{lf-ent}). Indeed, taken $V$ an l.l.c.\! vector space and $\phi:V\to V$ a continuous endomorphism, for every $U\in\BV$ we construct an open linear subspace $U^-$ of $V$ (see Definition~\ref{def:uminus}) such that $\phi^{-1}U^-$ is an open linear subspace of $U^-$ of finite codimension and
\begin{equation*}
H(\phi,U)=\dim \frac{U^-}{\phi^{-1}U^-}.
\end{equation*}

A first Limit-free Formula for $h_{alg}$ in the case of injective endomorphisms of torsion abelian groups was sketched by Yuzvinski in \cite{Y} and was later proved in a slightly more general setting in \cite{DGB-lff}; this result was extended in \cite[Lemma 5.4]{yuzapp} to a Limit-free Formula for the intrinsic algebraic entropy of automorphisms of abelian groups.
In \cite{DGB-lff} one can find also a Limit-free Formula for the topological entropy of surjective continuous endomorphisms of totally disconnected compact groups, which was extended to continuous endomorphisms of totally disconnected locally compact groups in \cite[Proposition 3.9]{GBVirili}, using ideas by Willis in \cite{Willis}.
Our Limit-free Formula is inspired by all these results, mainly by ideas from the latter one.

\smallskip
The Limit-free Formula is one of the main tools that we use in Section \ref{AT-sec} to extend the Addition Theorem from the discrete case (i.e., the Addition Theorem for $\ent_{\dim}$ \cite[Theorem 5.1]{GBSalce}) to the general case of l.l.c\! vector spaces (see Theorem \ref{thm:ATllc}). If $V$ is an l.l.c.\! vector space, $\phi:V\to V$ a continuous endomorphism and $W$ a closed $\phi$-invariant linear subspace of $V$, consider the following commutative diagram
\begin{equation*}
\xymatrix{
0\ar[r]&W\ar[r]\ar[d]^{\f\restriction_W}&V\ar[r]\ar[d]^{\f}&V/W\ar[r]\ar[d]^{\overline\f}&0\\
0\ar[r]&W\ar[r]&V\ar[r]&V/W\ar[r]&0}
\end{equation*}
of continuous endomorphisms of l.l.c.\! vector spaces, where $\f\restriction_W$ is the restriction of $\f$ to $W$ and $\overline\f$ is induced by $\phi$; we say that the Addition Theorem holds if $$\ent(\f)=\ent(\f\restriction_W)+\ent(\overline\f).$$

\smallskip
While it is known that $h_{alg}$ satisfies the Addition Theorem for endomorphisms of discrete abelian groups (see \cite{DGBpet}), it is still an open problem to establish whether $h_{alg}$ satisfies the Addition Theorem in the general case of continuous endomorphisms of locally compact abelian groups; from the Addition Theorem for the topological entropy in \cite{GBVirili} and the Bridge Theorem in \cite{GBD} one can only deduce that the Addition Theorem holds for $h_{alg}$ in the case of topological automorphisms of locally compact abelian groups which are compactly covered (i.e., they have totally disconnected Pontryagin dual). 
Here, Theorem~\ref{thm:ATllc} shows in particular that the Addition Theorem holds for $h_{alg}$ on the small subclass of compactly covered locally compact abelian groups consisting of all locally linearly compact spaces over finite fields.

\medskip
With respect to the Uniqueness Theorem for $\ent_{\dim}$ mentioned above, we leave open the following question.

\begin{question}
Does a Uniqueness Theorem hold also for the algebraic entropy $\ent$ on locally linearly compact vector spaces?
\end{question}

In other words, we ask whether $\ent$ is the unique collection of functions $\ent^V:\End(V)\to\N\cup\{\infty\}$, $\phi\mapsto\ent(\phi)$,
satisfying for every l.l.c.\! vector space $V$: Invariance under conjugation, Continuity on direct limits, Addition Theorem and $\ent(\beta_F)=\dim F$ for any finite-dimensional vector space $F$, where $V=\bigoplus_{n=-\infty}^0 F\oplus \prod_{n=1}^\infty F$ is endowed with the topology inherited from the product topology of $\prod_{n\in\Z}F$, and $\beta_F:V \to V$, $(x_n)_{n\in\Z}\mapsto (x_{n-1})_{n\in\Z}$ is the right Bernoulli shift (see Example \ref{bernoulli}).

\medskip
We end by remarking that in \cite{CGB} we introduce a topological entropy for l.l.c.\! vector spaces and connect it to the algebraic entropy studied in this paper by means of Lefschetz Duality, by proving a Bridge Theorem in analogy to the ones recalled above for $h_{alg}$ and $h_{top}$ in the case of locally compact abelian groups and their continuous endomorphisms.

\section{Background on locally linearly compact vector spaces}\label{s:llc}

Fix an arbitrary field $\K$ endowed always with the discrete topology. A topological vector space $V$ over $\K$ is said to be \emph{linearly topologized} if it is Hausdorff and it admits a neighborhood basis at $0$ consisting of linear subspaces of $V$. Clearly, a discrete vector space $V$ is linearly topologized, and if $V$ has finite dimension then the vice-versa holds as well (see \cite[ p.76, (25.6)]{Lef}).

If $W$ is a linear subspace of a linearly topologized vector space $V$, then $W$ with the induced topology is a linearly topologized vector space; if $W$ is also closed in $V$, then $V/W$ with the quotient topology is a linearly topologized vector space as well.
%Let $V$ be a linearly topologized vector space and $\B$ a local basis at $0$ of $V$ consisting of open linear subspaces. The elements of $\B$ easily satisfy the following conditions:
%\begin{itemize}
%\item[(A1)] $\forall U_1,U_2\in\B\ \exists U_3\in\B\ s.t.\ U_3\subseteq U_1\cap U_2$,
%\item[(A2)] $\forall x\in V,\forall U\in\B\ \exists W\in\B\ s.t.\ x+W\subseteq U.$
%\end{itemize}
%Moreover, if $V$ is a Hausdorff vector space and $\B$ a family of linear subspaces satisfying (A1) and (A2), then $\B$ determines a unique linear topology of $V$ for which $\B$ is a local basis at 0.
%If $U$ is an open linear subspace of a linearly topologized vector space $V$, then $U$ is also closed (by considering the cosets mod $U$). Analogously, if $U'$ is a linear subspace of $V$ containing $U$, then $U'$ is also open.

\medskip
Given a linearly topologized vector space $V$, a  \emph{linear variety} $M$ of $V$ is a subset $v+W$, where $v\in V$ and $W$ is a linear subspace of $V$. A linear variety $M=v+W$ is said to be \emph{open} (respectively, \emph{closed}) in $V$ if $W$ is open (respectively, closed) in $V$. 

A  linearly topologized vector space $V$ is \emph{linearly compact} if any collection of closed linear varieties of $V$ with the finite intersection property has non-empty intersection (equivalently, any collection of open linear varieties of $V$ with the finite intersection property has non-empty intersection) (see \cite{Lef}).

%\begin{remark}\label{rem:lc}
%It is possible to prove that it suffices to only consider collections of open linear varieties of $V$ with the finite intersection property in order to prove the linear compactness of $V$. Thus, if $V$ can be endowed with two different linear topologies $\tau$ and $\tau'$ such that $\tau<\tau'$, then $(V,\tau')$ is linearly compact whenever $(V,\tau)$ is linearly compact. 
%\end{remark} 
%\begin{proof} As any open linear variety is also closed, the necessary condition is straightforward. Now let $\mathcal{B}$ be a neighbourhood basis at $0$ consisting of linear subspaces of $V$. If $\{M_i\}_{i\in I}$ is a collection of closed linear varieties of $V$ satisfying the finite intersection property, one may construct the collection $\mathcal{F}=\{M_i + W\mid i\in I, W\in\mathcal{B}\}$. It is easy to check that $\mathcal{F}$ is a collection of open linear varieties with the finite intersection property such that
%$$\bigcap_{i\in I,
% W\in\mathcal{B}}(M_i+W)=\bigcap_{i\in I}(\bigcap_{W\in\mathcal{B}}(M_i+W))=\bigcap_{i\in I}M_i,$$
%where the last equality follows by $M_i=\bar{M_i}$ for all $i\in I$. This yields the claim.
%\end{proof}

\medskip
For reader's convenience, we collect in the following proposition all those properties concerning linearly compact vector spaces that we use further on.

\begin{proposition}\label{prop:lc properties}
Let $V$ be a linearly topologized vector space.
\begin{itemize}
\item[(a)] If $W$ is a linearly compact subspace of $V$, then $W$ is closed.
\item[(b)] If $V$ is linearly compact and $W$ is a closed linear subspace of $V$, then $W$ is linearly compact.
\item[(c)] If $W$ is a linearly topologized vector space and $\phi:V\to W$ is a surjective continuous homomorphism, then $W$ is linearly compact.
\item[(d)] If $V$ is discrete, then $V$ is linearly compact if and only if it has finite dimension (hence, if $V$ has finite dimension then $V$ is linearly compact).
\item[(e)] If $W$ is a closed linear subspace of $V$, then $V$ is linearly compact if and only if $W$ and $V/W$ are linearly compact. 
\item[(f)] The direct product of linearly compact vector spaces is linearly compact.
\item[(g)] An inverse limit of linearly compact vector spaces is linearly compact.
\item[(h)] A linearly compact vector space is complete.
%\item[(f)] if $V$ is linearly compact, then $V$ is complete;
\end{itemize}
\end{proposition}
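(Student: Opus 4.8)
The plan is to prove the items roughly in the order (a), (b), (c), (h), then (f) and (g), then (d), and finally (e), since the later items lean on the earlier ones; all of them are either contained in \cite{Lef} or readily derived from the material there, but I would supply self-contained arguments. The four items (a), (b), (c), (h) are direct manipulations of the finite intersection property. For (a), if $v\in\overline W$ then for every open linear subspace $U$ the set $(v+U)\cap W$ is a nonempty closed linear variety of $W$ (a coset of the open, hence closed, subspace $U\cap W$); these are stable under finite intersections, so linear compactness of $W$ yields $w\in\bigcap_U(v+U)\cap W$, and then $w-v\in\bigcap_UU=\{0\}$ forces $v=w\in W$. For (b), a family of closed linear varieties of $W$ with the finite intersection property is also one in $V$ (as $W$ is closed), and a common point found in $V$ lies in $W$. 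For (c), if $\phi\colon V\to W$ is continuous and surjective, the preimages $\phi^{-1}(M_i)$ of closed linear varieties are closed linear varieties of $V$ (surjectivity makes each a variety), surjectivity preserves the finite intersection property, and a point of $\bigcap_i\phi^{-1}(M_i)$ maps into $\bigcap_iM_i$. For (h), a Cauchy filter $\mathcal F$ selects, for each open linear subspace $U$, the unique coset $c_U$ of $U$ that lies in $\mathcal F$; the $c_U$ form a family of closed linear varieties with the finite intersection property, and a common point $v$ satisfies $v+U\in\mathcal F$ for all $U$, i.e.\ $\mathcal F\to v$.

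The technical heart is (f). Writing $V=\prod_{j\in J}V_j$ with projections $p_j$, let $\mathcal M$ be a family of closed linear varieties with the finite intersection property and use Zorn's Lemma to enlarge $\mathcal M$ to a family $\mathcal N$ maximal among such families; such an $\mathcal N$ is automatically closed under finite intersections and contains every closed linear variety containing some member of $\mathcal N$. For each $j$, since the closure of a linear variety is a linear variety, the sets $\overline{p_j(N)}$, $N\in\mathcal N$, form a family of closed linear varieties of $V_j$ with the finite intersection property, so choose $x_j\in\bigcap_N\overline{p_j(N)}$ and set $x=(x_j)_j$. The crux is that $x\in\overline N$ for every $N\in\mathcal N$: for a basic neighbourhood $x+\prod_jU_j$ of $x$ (with $U_j=V_j$ off a finite set $F$), the relation $x_j\in\overline{p_j(N')}$ makes $p_j^{-1}(x_j+U_j)$ meet every member of $\mathcal N$, so by maximality $p_j^{-1}(x_j+U_j)\in\mathcal N$ for $j\in F$, whence $N\cap\bigcap_{j\in F}p_j^{-1}(x_j+U_j)=N\cap(x+\prod_jU_j)$ is a nonempty member of $\mathcal N$; thus $x\in\bigcap_NN\subseteq\bigcap_iM_i$. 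Item (g) follows at once: an inverse limit $\varprojlim_i V_i$ is the intersection in $\prod_i V_i$ of the equalizers of the continuous maps $(v_i)_i\mapsto\rho_{ij}(v_j)$ and $(v_i)_i\mapsto v_i$ into the Hausdorff space $V_i$, hence closed in the linearly compact space $\prod_i V_i$ and therefore linearly compact by (b).

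For (d) I would cite \cite[p.~76]{Lef}; the nonobvious implication is seen by noting that if $e_0,e_1,\dots$ are independent in a discrete $V$, the cosets $(e_0+\dots+e_{n-1})+\langle e_k:k\ge n\rangle$ form a decreasing chain of closed linear varieties with empty intersection. Finally (e): the ``only if'' direction is immediate from (b) (applied to $W$) and (c) (applied to the quotient map $\pi\colon V\to V/W$). For the ``if'' direction I would use the characterisation that a linearly topologized space is linearly compact precisely when it is complete and all of its discrete quotients $V/U$ ($U$ open) are finite-dimensional: a complete space is topologically $\varprojlim_U V/U$, which is closed in $\prod_U V/U$ and hence linearly compact by (d), (f) and the closedness argument of (g); the converse uses (h), (c) and (d). So, with $W$ and $V/W$ linearly compact: for open $U\le V$, the space $V/U$ is an extension of the discrete quotient $V/(U+W)$ of $V/W$ by $(U+W)/U\cong W/(W\cap U)$, a discrete quotient of $W$, and both are finite-dimensional by (d), hence so is $V/U$. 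Completeness of $V$ is obtained by lifting a Cauchy filter $\mathcal F$: its pushforward converges in $V/W$ to some $\pi(z)$, and translating by $z$ and taking the traces in $W$ of the sets $(F-z)+O$ (with $F\in\mathcal F$, $O\le V$ open) produces a Cauchy filter on $W$ whose limit $w_0$ gives $\mathcal F\to z+w_0$.

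The main obstacle is the pair (f) and the completeness half of (e). In (f) the delicate points are verifying that the maximal family $\mathcal N$ is genuinely closed under finite intersections and upward closed among closed linear varieties, and that maximality forces $p_j^{-1}(x_j+U_j)\in\mathcal N$; in (e) one must check that the lifted family on $W$ really is a Cauchy filter (this uses that open linear subspaces are subspaces, so $-O=O$) and that its limit produces a limit for $\mathcal F$ on $V$. Everything else is routine bookkeeping with the finite intersection property and with neighbourhood bases consisting of open linear subspaces.
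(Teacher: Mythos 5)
Your proposal is correct, but it takes a very different route from the paper: the paper's ``proof'' of this proposition is essentially a list of references --- items (a)--(d) are cited from \cite[p.~78]{Lef}, items (e) and (f) from \cite[Propositions 2 and 9]{Mat}, (g) is deduced from (b) and (f) exactly as you do (the inverse limit is a closed subspace of the product), and (h) is obtained by a slicker trick than yours: embed $V$ densely into its completion $\tilde V$ and invoke (a) to conclude that $V$ is closed, hence equal to $\tilde V$. What you supply instead is a self-contained reconstruction of the standard arguments: the finite-intersection-property manipulations for (a), (b), (c), a direct Cauchy-filter argument for (h), the Zorn's-Lemma/maximal-family proof of (f) (the linear analogue of the Alexander--Bourbaki proof of Tychonoff), the shifted-cosets counterexample for (d), and, for the ``if'' direction of (e), the characterisation of linear compactness as ``complete with all discrete quotients $V/U$ finite-dimensional'' together with $V\cong\varprojlim_U V/U$. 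All of these steps check out --- in particular the maximality argument in (f) correctly uses that $\mathcal N$ is closed under finite intersections before adjoining $p_j^{-1}(x_j+U_j)$, and the filter-lifting argument for completeness in (e) is sound. What your approach buys is independence from \cite{Mat} (whose Propositions 2 and 9 are precisely your (e) and (f)) at the cost of about two pages of standard material; if you adopt the paper's completion trick for (h) you can also delete your filter argument there, since (a) does the work. The only cosmetic omission is the parenthetical in (d): that a finite-dimensional linearly topologized space is linearly compact without assuming discreteness follows because such a space is automatically discrete, as recalled at the start of Section~2 from \cite[(25.6)]{Lef}.
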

\begin{proof} 
A proof for (a), (b), (c) and (d) can be found in \cite[page 78]{Lef}. Properties (e) and (f) are proved in \cite[Propositions 2 and 9]{Mat}. Finally, (g) follows from (b) and (f). Let $\iota\colon V\to\tilde V$ be the topological dense embedding of $V$ into its completion $\tilde{V}$, thus (a) implies (h).
\end{proof}

A linearly topologized vector space $V$ is \emph{locally linearly compact} (briefly, l.l.c.\!) if there exists an open linear subspace of $V$ that is linearly compact (see \cite{Lef}). Thus $V$ is l.l.c.\! if and only if it admits a neighborhood basis at $0$ consisting of linearly compact linear subspaces of $V$. Linearly compact and discrete vector spaces are l.l.c.\! vector spaces, of course. 
%By Proposition~\ref{prop:lc properties}, if $W$ is a closed linear subspace of $V$, then $W$ with the induced topology and $V/W$ with the quotient topology are l.l.c.\! vector spaces as well. Moreover, 
The structure of an l.l.c.\! vector space can be characterized as follows.

\begin{theorem}[\protect{\cite[(27.10), page 79]{Lef}}] \label{thm:dec}
If $V$ is an l.l.c.\! vector space, then $V$ is topologically isomorphic to $V_c\oplus V_d$, where $V_c$ is a linearly compact linear subspace of $V$ and $V_d$ is a discrete linear subspace of $V$. %In particular, $V_2$ is a clopen neighbourhood of $0$ in $V$.
\end{theorem}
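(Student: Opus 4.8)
The plan is to build the splitting by hand rather than appeal to any structure theory. Since $V$ is l.l.c., fix an open linearly compact linear subspace $U$ of $V$ and set $V_c:=U$; note that $U$ is automatically closed, being an open subgroup of the topological group $V$, or by Proposition~\ref{prop:lc properties}(a) since it is linearly compact. Now choose a $\K$-basis of the quotient vector space $V/U$, lift each basis vector to an element of $V$, and let $V_d$ be the $\K$-span of these lifts, so that $V=V_c\oplus V_d$ \emph{as $\K$-vector spaces}. Everything then reduces to two topological checks: that $V_d$ is discrete in the subspace topology, and that the algebraic direct-sum decomposition is in fact a topological one.

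For the first point, $V_d\cap U=\{0\}$ and $U$ is open in $V$, so $\{0\}$ is open in $V_d$, i.e. $V_d$ is discrete. For the second, let $\pi_d\colon V\to V$ and $\pi_c=\mathrm{id}_V-\pi_d\colon V\to V$ be the algebraic projections associated with $V=V_c\oplus V_d$, with images $V_d$ and $V_c$ respectively; then $\pi_c$ is continuous as soon as $\pi_d$ is, because subtraction in $V$ is continuous. To see that $\pi_d$ is continuous it suffices to check continuity as a map $V\to V_d$ (the inclusion $V_d\hookrightarrow V$ being continuous), and since $V_d$ is discrete this amounts to the observation that $\pi_d^{-1}(d)=d+U$ is open in $V$ for every $d\in V_d$. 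Hence the continuous bijection $V_c\times V_d\to V$, $(u,d)\mapsto u+d$, has continuous inverse $v\mapsto(\pi_c(v),\pi_d(v))$ and is therefore a topological isomorphism, which is precisely the assertion $V\cong V_c\oplus V_d$. (Alternatively one compares neighbourhood bases at $0$ directly: intersecting a basis of open linear subspaces of $V$ with $U$ yields a neighbourhood basis at $0$ consisting of subspaces contained in $U$, and because $V_d$ is discrete these coincide with the basic neighbourhoods of $0$ for the product topology on $V_c\times V_d$.)

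I do not expect a genuine obstacle here. The only point that really needs attention is that openness of $U$ is what makes both $V/U$ discrete and the projection $\pi_d$ continuous; once this is isolated, the rest is formal. The sole non-elementary ingredient is the existence of a $\K$-basis of $V/U$ — equivalently, of an algebraic complement of $U$ in $V$ — which uses Zorn's lemma when $V/U$ is infinite-dimensional.
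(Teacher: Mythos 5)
Your argument is correct and complete. The paper itself does not prove this statement at all: it is quoted directly from Lefschetz \cite[(27.10), page 79]{Lef}, so there is no in-text proof to compare against. What you have written is the standard self-contained argument behind that citation: pick an open linearly compact $U\in\BV$, take any algebraic complement $V_d$ (Zorn), observe that $V_d\cap U=\{0\}$ with $U$ open forces $V_d$ to be discrete, and then use openness of $U$ once more to see that the projection $\pi_d$ onto the discrete factor is continuous (its fibres $d+U$ are open), whence $\pi_c=\mathrm{id}_V-\pi_d$ is continuous and $(u,d)\mapsto u+d$ is a topological isomorphism $V_c\times V_d\to V$. You correctly isolate the one substantive point, namely that openness of $U$ is what upgrades the purely algebraic splitting to a topological one; the rest is formal, and your parenthetical alternative via neighbourhood bases (every small enough open linear subspace $W$ lies in $U$, hence equals $W\times\{0\}$, a basic product neighbourhood since $V_d$ is discrete) is an equally valid way to finish. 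The only cosmetic remark is that the closedness of $U$ is never actually used in your proof, so that sentence could be dropped.
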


By Proposition~\ref{prop:lc properties} and Theorem~\ref{thm:dec}, one may prove that an l.l.c.\! vector space verifies the following properties.

\begin{proposition}\label{rem:complete}
Let $V$ be a linearly topologized vector space.
\begin{itemize}
\item[(a)] If $V$ is l.l.c.\!, then $V$ is complete.
\item[(b)] If $W$ is an l.l.c.\! linear subspace of $V$, then $W$ is closed.
\item[(c)] If $W$ is a closed linear subspace of $V$, then $V$ is l.l.c.\! if and only if $W$ and $V/W$ are l.l.c.\!.
\end{itemize}
\end{proposition}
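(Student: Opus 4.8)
The plan is to establish (a), (b) and (c) in that order, using only Proposition~\ref{prop:lc properties}, Theorem~\ref{thm:dec}, and the standard fact that a complete linear subspace of a Hausdorff topological vector space is closed. For (a), by Theorem~\ref{thm:dec} we may write $V\cong V_c\oplus V_d$ with $V_c$ linearly compact and $V_d$ discrete; by Proposition~\ref{prop:lc properties}(h) the space $V_c$ is complete, $V_d$ is complete because it is discrete, and a finite direct sum of complete topological vector spaces is complete, so $V$ is complete. (One can also argue directly: an open linearly compact subspace $U$ of $V$ is complete by Proposition~\ref{prop:lc properties}(h), and every Cauchy filter on $V$ is eventually contained in a single coset of $U$, which is complete, so the filter converges.)

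For (b), if $W$ is an l.l.c.\ linear subspace of $V$ then $W$ is complete by part (a) applied to $W$. If $x$ lies in the closure of $W$ in $V$, the trace on $W$ of the neighbourhood filter of $x$ is a Cauchy filter of $W$, hence converges in $W$, and since $V$ is Hausdorff its limit must be $x$; thus $x\in W$ and $W$ is closed.

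For (c), let $W$ be a closed linear subspace of $V$ and $\pi\colon V\to V/W$ the (continuous, open) quotient map. If $V$ is l.l.c., fix an open linearly compact subspace $U$ of $V$: then $W\cap U$ is open in $W$ and closed in $U$, hence linearly compact by Proposition~\ref{prop:lc properties}(b), so $W$ is l.l.c.; and $\pi(U)$ is open in $V/W$ (since $\pi$ is open) and linearly compact by Proposition~\ref{prop:lc properties}(c) as a continuous surjective image of $U$, so $V/W$ is l.l.c. Conversely, assume $W$ and $V/W$ are l.l.c.; choose an open linear subspace $O$ of $V$ with $O\cap W$ linearly compact (possible since $W$ is l.l.c.) and an open linearly compact subspace $\overline U$ of $V/W$. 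Replacing $\overline U$ by $\overline U\cap\pi(O)$ --- which is still open and is linearly compact by Proposition~\ref{prop:lc properties}(b) as a closed subspace of $\overline U$ --- we may assume $\overline U\subseteq\pi(O)$. Put $U:=O\cap\pi^{-1}(\overline U)$, an open linear subspace of $V$. Since $W\subseteq\pi^{-1}(\overline U)$, we have $U\cap W=O\cap W$, which is linearly compact, and $\pi(U)=\pi(O)\cap\overline U=\overline U$; so $\pi$ restricts to a continuous open surjection $U\to\overline U$ with kernel $O\cap W$, inducing a topological isomorphism $U/(O\cap W)\cong\overline U$. As $O\cap W$ and $\overline U$ are linearly compact, Proposition~\ref{prop:lc properties}(e) gives that $U$ is linearly compact, and since $U$ is open in $V$ this shows $V$ is l.l.c.

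The only step that is more than routine bookkeeping is, in the converse of (c), checking that $\pi|_U\colon U\to\overline U$ is an \emph{open} map --- so that the algebraic isomorphism $U/(O\cap W)\cong\overline U$ is genuinely a topological one and Proposition~\ref{prop:lc properties}(e) applies; this holds because $\pi$ is open and both $U$ and $\pi(U)=\overline U$ are open in the respective spaces. Everything else reduces to verifying that the intersections, sums and images of subspaces appearing above are open, closed or linear as claimed.
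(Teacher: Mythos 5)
Your proof is correct and uses exactly the ingredients the paper points to (Proposition~\ref{prop:lc properties} and Theorem~\ref{thm:dec}); the paper leaves the verification to the reader, and your argument fills it in as intended. The only steps you gloss over --- extracting an open linear subspace $O$ of $V$ with $O\cap W$ linearly compact from the hypothesis that $W$ is l.l.c., and the closedness of $O\cap W$ in $U$ needed to invoke Proposition~\ref{prop:lc properties}(e) --- are routine, since open linear subspaces are closed and linearly compact subspaces are closed by Proposition~\ref{prop:lc properties}(a).
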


Given an l.l.c.\! vector space $V$, for the computation of the algebraic entropy we are interested in the neighborhood basis $\BV$ at $0$ of $V$ consisting of all linearly compact open linear subspaces of $V$. 
We see now how the local bases $\B(W)$ and $\B(V/W)$ of a closed linear subspace $W$ of $V$ and the quotient $V/W$ depend on $\BV$.

\begin{proposition}\label{prop:basis}
Let $V$ be an l.l.c.\! vector space and $W$ a closed linear subspace of $V$. Then:
\begin{enumerate}[(a)]
\item $\B(W)=\{U\cap W\mid U\in\BV\}$;
\item $\B(V/W)=\{(U+W)/W\mid U\in\BV\}$.
\end{enumerate}
\end{proposition}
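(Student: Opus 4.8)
The plan is to establish each of (a) and (b) as a pair of inclusions. One inclusion --- that every set of the stated form already lies in $\B(W)$, resp.\ in $\B(V/W)$ --- I expect to be routine: it only requires checking that such a set is an open linear subspace and then invoking the appropriate permanence property of linear compactness from Proposition~\ref{prop:lc properties}. The other inclusion --- that \emph{every} member of $\B(W)$, resp.\ of $\B(V/W)$, arises from some $U\in\BV$ --- is where the work lies, and in both cases my strategy is to take a small element of $\BV$ (which exists since, by definition of l.l.c., $\BV$ is a neighbourhood basis at $0$) and enlarge it in a controlled way so that it meets, resp.\ projects onto, the prescribed subspace exactly. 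I will use freely that open linear subspaces are closed, that linearly compact subspaces are closed (Proposition~\ref{prop:lc properties}(a)), and that the closedness of $W$ makes $V/W$ linearly topologized.

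For (a), the easy inclusion is: given $U\in\BV$, the set $U\cap W$ is open in $W$ and is a closed linear subspace of the linearly compact $U$, hence linearly compact by Proposition~\ref{prop:lc properties}(b). For the converse, given $O\in\B(W)$, I would first use that $\{U\cap W\mid U\in\BV\}$ is a neighbourhood basis at $0$ in $W$ to pick $U_1\in\BV$ with $U_1\cap W\subseteq O$, and then set $U:=U_1+O$. The verification splits into two points: $U\in\BV$ --- openness is clear from $U\supseteq U_1$, and linear compactness follows because $U$ is the image of $U_1\times O$, linearly compact by Proposition~\ref{prop:lc properties}(f), under the continuous addition of $V$, hence linearly compact by Proposition~\ref{prop:lc properties}(c); and $U\cap W=O$ --- here the modular law gives $(U_1+O)\cap W=(U_1\cap W)+O$, which equals $O$ precisely because we arranged $U_1\cap W\subseteq O$. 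That last point is the reason for shrinking $U_1$ first, and it is the crux of part (a).

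For (b), writing $\pi\colon V\to V/W$ for the projection, the easy inclusion is: for $U\in\BV$, the image $(U+W)/W=\pi(U)$ is open in $V/W$ and is a continuous surjective image of the linearly compact $U$, hence linearly compact by Proposition~\ref{prop:lc properties}(c). For the converse, given $\overline O\in\B(V/W)$, I would take $U_1\in\BV$ with $U_1\subseteq\pi^{-1}(\overline O)$ (possible since $\pi^{-1}(\overline O)$ is an open neighbourhood of $0$), and then observe that $\overline O/\pi(U_1)$ is a \emph{discrete} quotient --- because $\pi(U_1)$ is an open linear subspace of $\overline O$ --- of the linearly compact space $\overline O$, hence is linearly compact by Proposition~\ref{prop:lc properties}(e) and therefore finite-dimensional by Proposition~\ref{prop:lc properties}(d). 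This is the key point of part (b): the discrepancy between $\pi(U_1)$ and $\overline O$ is finite-dimensional, so I may choose finitely many $o_1,\dots,o_k\in\pi^{-1}(\overline O)$ whose classes span $\overline O/\pi(U_1)$ and set $U:=U_1+\langle o_1,\dots,o_k\rangle$. Then $U$ is open, and linearly compact as the image under addition of $U_1\times\langle o_1,\dots,o_k\rangle$ with the second factor finite-dimensional, hence linearly compact by Proposition~\ref{prop:lc properties}(d); so $U\in\BV$. Finally $U\subseteq\pi^{-1}(\overline O)$ and $\pi(U)=\overline O$ by construction, whence $U+W=\pi^{-1}(\pi(U+W))=\pi^{-1}(\overline O)$ and therefore $(U+W)/W=\overline O$.

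The main obstacle, then, is not any single hard estimate but rather identifying the right enlargement in the two converse directions and checking it behaves as wanted: in (a) that $U=U_1+O$ recovers $O$ on $W$ (via modularity, after shrinking $U_1$), and in (b) that the ``gap'' $\overline O/\pi(U_1)$ is finite-dimensional so that a finite enlargement suffices without destroying linear compactness. The remaining steps are routine permanence-of-linear-compactness bookkeeping through Proposition~\ref{prop:lc properties}, together with care that all the intersections, sums and quotients are taken in the appropriate Hausdorff subspace or quotient topologies, which is where the closedness of $W$ enters.
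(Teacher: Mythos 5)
Your proof is correct and follows essentially the same route as the paper's: in both parts one shrinks to a basic $U_1\in\BV$ contained in (the preimage of) the given neighbourhood and then enlarges it so that its trace on $W$, respectively its image in $V/W$, is exactly the prescribed subspace, with the permanence properties of Proposition~\ref{prop:lc properties} doing the bookkeeping. The only (harmless) variation is in (a), where the paper adds a finite-dimensional complement $F$ of $U_1\cap W$ in $O$ rather than all of $O$; your version, applying the modular law with the linearly compact $O$ itself, works just as well.
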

\begin{proof}
(a) Clearly, $\{U\cap W\mid U\in\BV\}\subseteq\B(W)$. Conversely, let $U_W\in\mathcal B(W)$. Since $U_W$ is open in $W$, there exists an open subset $A\subseteq V$ such that $U_W=A\cap W$. As $A$ is a neighborhood of $0$, there exists $U'\in\BV$ such that $U'\subseteq A$. In particular, $U'\cap W\subseteq U_W$ is an open subspace of the linearly compact space $U_W$, and so $U_W/(U'\cap W)$ has finite dimension by Proposition~\ref{prop:lc properties}(d,e). Therefore, there exists a finite-dimensional subspace $F\leq U_W$ such that $U_W=F+(U'\cap W)$. Finally, let $U:=F+U'\in\BV$. Hence, for $F\leq W$ we have $U_W=F+(U'\cap W)=(F+U')\cap W=U\cap W$.

(b) Since the canonical projection $\pi:V\to V/W$ is continuous and open, the set $\{\pi(U)\mid U\in \BV\}$ is contained in $\B(V/W)$.
To prove that $\B(V/W)\subseteq\{(U+W)/W\mid U\in\BV\}$, consider  $\overline U\in\mathcal B(V/W)$ and let $\pi:V\to V/W$ be the canonical projection. Then $\pi^{-1} \overline U$ is an open linear subspace of $V$, hence it contains some $U\in\BV$. Then $\pi U\leq\overline U$ and $\pi U$ has finite codimension in $\overline U$ by Proposition \ref{prop:lc properties}(d,e).
Therefore, there exists a finite-dimensional linear subspace $\overline F$ of $V/W$ such that $\overline F\leq \overline U$ and $\overline U=\pi U+\overline F$. Let $F$ be a finite-dimensional linear subspace of $V$ such that $F\leq \pi^{-1} \overline U$ and $\pi F=\overline F$. Now $\pi(U+F)=\overline U$ and $U+F\in\BV$ by Proposition \ref{prop:lc properties}(c).
\end{proof}

As consequence of Lefschetz Duality Theorem, every linearly compact vector space is topologically isomorphic to a direct product of one-dimensional vector spaces (see \cite[Theorem 32.1]{Lef}). From this result, we derive the known properties that if a linearly topologized vector space $V$ over a finite discrete field is linearly compact then it is compact, and if $V$ is l.l.c.\! then it is locally compact.

\begin{proposition} \label{prop:compact}
Let $V$ be a linearly compact vector space over a discrete field $\K$. Then $V$ is compact if and only if  $\K$ is finite. 
\end{proposition}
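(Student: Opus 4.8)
The plan is to use the structure theorem (Theorem~\ref{thm:dec}) together with the Lefschetz Duality consequence that any linearly compact vector space $V$ over $\K$ is topologically isomorphic to a product $\prod_{i\in I}\K$ for some index set $I$. Granting this, the statement reduces to the elementary topological fact about when a product of copies of the discrete field $\K$ is compact.

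First I would argue the direction that matters most. Suppose $\K$ is finite. Then $\K$, being finite and discrete, is compact, and by Tychonoff's theorem $\prod_{i\in I}\K$ is compact; since $V$ is topologically isomorphic to such a product, $V$ is compact. Conversely, suppose $\K$ is infinite. Then $\K$ with the discrete topology is not compact (an infinite discrete space is never compact: the cover by singletons has no finite subcover). If $V\neq 0$, pick any index $i_0$ and use the continuous projection $\pi_{i_0}\colon V\cong\prod_{i\in I}\K\to\K$ onto the $i_0$-th coordinate; this is a continuous surjection of $V$ onto the non-compact space $\K$, so $V$ cannot be compact (a continuous image of a compact space is compact). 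If $V=0$ there is nothing to prove, so one should either dismiss the trivial case or note that the statement is understood for $V\neq 0$; I would flag this explicitly.

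The only subtlety — and the step I expect to be the mildest obstacle — is justifying the reduction to the product form. One invokes \cite[Theorem 32.1]{Lef} (cited just above the statement in the excerpt) to get $V\cong\prod_{i\in I}\K_i$ with each $\K_i$ one-dimensional, hence each $\K_i\cong\K$ as a topological vector space, with $\K$ carrying the discrete topology; the topology on $\K_i$ must be discrete because a finite-dimensional linearly topologized vector space is discrete (\cite[p.76, (25.6)]{Lef}, already used in Section~\ref{s:llc}). Everything else is then pure point-set topology, so no further machinery is needed. An equally clean alternative avoiding duality: if $\K$ is finite, any linearly compact $V$ has a local basis of open linear subspaces $U$, and each quotient $V/U$ is discrete and linearly compact, hence finite-dimensional over the finite field $\K$, hence finite, so $V=\varprojlim V/U$ is an inverse limit of finite discrete spaces and therefore compact; if $\K$ is infinite and $V\neq 0$, then $V$ has a nonzero linearly compact, hence closed, hence (by local linear compactness of $V$ itself plus Hausdorffness) a continuous surjection onto some nonzero finite-dimensional discrete $V/U\cong\K^{n}$, which is not compact. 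I would present the duality-based argument as the main line, since \cite[Theorem 32.1]{Lef} is already being cited, and remark that the inverse-limit argument gives a self-contained alternative.
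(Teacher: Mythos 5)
Your main argument is essentially the paper's own proof: write $V\cong\prod_{i\in I}\K_i$ with $\K_i=\K$ via \cite[Theorem 32.1]{Lef}, then use Tychonoff for the \virg{if} direction and the continuity of the coordinate projections onto the discrete factor $\K$ for the \virg{only if} direction. Your explicit flagging of the degenerate case $V=0$ (where the statement, read literally, fails for infinite $\K$) and the alternative inverse-limit argument are reasonable extras that the paper omits, but the core route is identical.
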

\begin{proof} 
Write $V=\prod_{i\in I} \K_i$ with $\K_i=\K$ for all $i\in I$. If $\K$ is finite, then $\K_i$ is compact for all $i\in I$, and so $V$ is compact.
Conversely, if $V$ is compact, then each $\K_i$ is compact as well, hence $\K$ is a compact discrete field, so $\K$ is finite.
\end{proof}

\begin{corollary}\label{cor:tdlc} 
An l.l.c.\! vector space $V$ over a finite discrete field $\FF$ is a totally disconnected locally compact abelian group.
\end{corollary}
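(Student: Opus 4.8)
The plan is to check the three defining requirements separately, the additive-group structure coming for free: as $V$ is in particular a topological vector space over $\FF$, its underlying additive group $(V,+)$ is an abelian topological group, so only local compactness and total disconnectedness remain to be verified. Both will be deduced from one observation. Since $V$ is l.l.c.\!, every neighbourhood of $0$ contains a linearly compact \emph{open} linear subspace, that is, a member of $\BV$; thus $\BV$ is a neighbourhood basis at $0$. By Proposition~\ref{prop:compact}, each $U\in\BV$ is moreover \emph{compact}, because $\FF$ is finite; and being an open subgroup, $U$ is also closed in $V$, hence clopen.

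For local compactness, it then suffices to pick any $U\in\BV$: it is a compact open neighbourhood of $0$, and translating it produces a compact neighbourhood of an arbitrary point of $V$. Hence $V$ is locally compact.

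For total disconnectedness, let $C$ be the connected component of $0$. For each $U\in\BV$ the sets $C\cap U$ and $C\setminus U=C\cap(V\setminus U)$ are both open in $C$ (since $U$ is clopen), they partition $C$, and $0\in C\cap U$; connectedness of $C$ then forces $C\subseteq U$. Therefore $C\subseteq\bigcap_{U\in\BV}U=\{0\}$, the intersection being trivial because $\BV$ is a neighbourhood basis at $0$ and $V$ is Hausdorff. Thus $C=\{0\}$, and since translations are homeomorphisms the connected component of every point of $V$ is a singleton, i.e.\ $V$ is totally disconnected.

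No genuine obstacle arises here: the corollary follows at once from Proposition~\ref{prop:compact} together with the definition of an l.l.c.\! vector space. The only point worth a moment's care is the passage from \virg{$\BV$ is a local basis of compact open subgroups} to \virg{the connected component of $0$ is trivial}, which rests precisely on the Hausdorffness built into the notion of a linearly topologized vector space.
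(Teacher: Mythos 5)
Your proof is correct and follows essentially the same route as the paper: both reduce the claim to the observation that, by Proposition~\ref{prop:compact}, $\BV$ is a neighbourhood basis at $0$ consisting of compact open subgroups. The only difference is that the paper then appeals to van Dantzig's theorem (really its easy converse direction), whereas you spell out the standard two-line verification that such a basis forces local compactness and triviality of the connected component of $0$.
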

\begin{proof} 
By Proposition~\ref{prop:compact}, $\BV$ is a local basis at 0 of $V$ consisting of compact open subgroups, thus van Dantzig Theorem yields the claim.
\end{proof}

\section{Existence of the limit and basic properties}\label{s:ent}

Let $V$ be an l.l.c.\! vector space, $\f:V\to V$ a continuous endomorphism and $U\in\BV$. 
For $n\in\N_+$ and a linear subspace $F$ of $V$, the \emph{$n^{th}$ partial $\phi$-trajectory} of $F$ is
$$T_n(\phi,F)= F+\phi F+\phi^2 F+\ldots +\phi^{n-1} F.$$ 
If $U\in\BV$, notice that for every $n\in\N_+$, $T_n(\phi,U)\in\BV$ as well, as it is open being the union of cosets of $U$, and linearly compact by Proposition \ref{prop:lc properties}(c,f). Moreover, $T_n(\phi,U)\leq T_{n+1}(\phi,U)$ for all $n\in\N_+$, thus we obtain an increasing chain of linearly compact open linear subspaces of $V$, namely
$$U=T_1(\f,U)\leq T_2(\phi,U)\leq\ldots\leq T_{n}(\phi,U)\leq T_{n+1}(\phi,U)\leq\ldots.$$
Moreover, the \emph{$\phi$-trajectory} of $U$ is $T(\phi,U)=\bigcup_{n\in\N_+} T_n (\phi,U),$
which is open and it is the smallest $\f$-invariant linear subspace of $V$ containing $U$. 
%Moreover, every partial $\f$-trajectory $T_n(\f,U)$ is an open linear subspace of $V$ (it is the union of cosets of $U$) and hence, the $\f$-trajectory $T(\f,U)$ is an open linear subspace of $V$ as well.

%\begin{deff}\label{def:ent}

%Given a continuous endomorphism $\phi: V \to V$ of an l.l.c.\! space $V$ and  $U\in\BV$, the \emph{algebraic entropy of $\phi$ with respect to $U$} is defined by
Hence, the algebraic entropy of $\phi$ with respect to $U$ introduced in \eqref{hdef} can be written as
\begin{equation}\label{eq:rel ent}
H(\phi,U)=\lim_{n \to \infty} \frac{1}{n}\dim\frac{T_n(\phi,U)}{U}.
\end{equation}
Notice that since $T_n(\f,U)$ is linearly compact and $U$ is open, $U$ has finite codimension in $T_n(\phi,U)$, that is, $\frac{T_n(\phi,U)}{U}$ has finite dimension by Proposition \ref{prop:lc properties}(d,e). Moreover, the following result shows that the limit in \eqref{eq:rel ent} exists.

\begin{proposition}\label{entvalue}
Let $V$ be an l.l.c.\! vector space and $\phi:V\to V$ a continuous endomorphism. For every $n\in\N_+$ let 
\begin{equation*}
\alpha_n=\dim\frac{T_{n+1}(\phi,U)}{T_n(\phi,U)}.
\end{equation*}
Then the sequence of non-negative integers $\{\alpha_n\}_n$ is stationary and $H(\phi,U)=\alpha$ where $\alpha$ is the value of the stationary sequence $\{\alpha_n\}_n$ for $n$ large enough.
\end{proposition}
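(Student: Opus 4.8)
The plan is to show that the sequence $\{\alpha_n\}_n$ is non-increasing; since it consists of non-negative integers, this immediately forces it to be stationary, and then the Cesàro-type limit defining $H(\phi,U)$ equals the eventual value $\alpha$. The key observation is that $\phi$ maps $T_n(\phi,U)$ into $T_{n+1}(\phi,U)$, because $\phi T_n(\phi,U) = \phi U + \phi^2 U + \ldots + \phi^n U \leq T_{n+1}(\phi,U)$, and moreover $T_{n+1}(\phi,U) = U + \phi T_n(\phi,U)$. So the first step is to record the identity $T_{n+1}(\phi,U) = U + \phi T_n(\phi,U)$ and the induced surjective linear map
\begin{equation*}
\frac{T_{n+1}(\phi,U)}{T_n(\phi,U)} \;=\; \frac{U + \phi T_n(\phi,U)}{T_n(\phi,U)} \;\longleftarrow\; \frac{\phi T_n(\phi,U)}{\phi T_n(\phi,U) \cap T_n(\phi,U)},
\end{equation*}
so that $\alpha_{n+1} = \dim \bigl(T_{n+2}(\phi,U)/T_{n+1}(\phi,U)\bigr)$ is the dimension of a quotient of $\phi T_{n+1}(\phi,U)/\bigl(\phi T_{n+1}(\phi,U)\cap T_{n+1}(\phi,U)\bigr)$.

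Next I would compare $\alpha_{n+1}$ with $\alpha_n$ by applying $\phi$ to the inclusion $T_n(\phi,U) \leq T_{n+1}(\phi,U)$. The map $\phi$ induces a surjective linear map $T_{n+1}(\phi,U)/T_n(\phi,U) \to \phi T_{n+1}(\phi,U)/\phi T_n(\phi,U)$, hence $\dim\bigl(\phi T_{n+1}(\phi,U)/\phi T_n(\phi,U)\bigr) \leq \alpha_n$. The remaining point is a general lemma in elementary linear algebra: if $A \leq B$ are linear subspaces of $V$ and $C$ is another subspace, then the natural map $(B\cap C)/(A\cap C) \to B/A$ is injective, so passing to the quotient by $C$ can only decrease dimensions; more precisely, from $T_{n+2}(\phi,U) = U + \phi T_{n+1}(\phi,U)$ and $T_{n+1}(\phi,U) = U + \phi T_n(\phi,U)$ one gets
\begin{equation*}
\frac{T_{n+2}(\phi,U)}{T_{n+1}(\phi,U)} \;\cong\; \frac{\phi T_{n+1}(\phi,U)}{\phi T_{n+1}(\phi,U)\cap T_{n+1}(\phi,U)},
\end{equation*}
which is a quotient of $\phi T_{n+1}(\phi,U)/\bigl(\phi T_n(\phi,U) + (\phi T_{n+1}(\phi,U)\cap T_{n+1}(\phi,U))\bigr)$, itself a quotient of $\phi T_{n+1}(\phi,U)/\phi T_n(\phi,U)$. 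Chaining these, $\alpha_{n+1} \leq \dim\bigl(\phi T_{n+1}(\phi,U)/\phi T_n(\phi,U)\bigr) \leq \alpha_n$.

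Once $\{\alpha_n\}_n$ is known to be non-increasing and $\N$-valued, it stabilizes at some value $\alpha$ for all $n \geq n_0$. Finally, writing $\dim\bigl(T_{n}(\phi,U)/U\bigr) = \sum_{k=1}^{n-1}\alpha_k$ (a telescoping sum, each term finite by Proposition~\ref{prop:lc properties}(d,e)), we get $\dim\bigl(T_n(\phi,U)/U\bigr) = \bigl(\sum_{k=1}^{n_0-1}\alpha_k\bigr) + (n - n_0)\alpha$ for $n \geq n_0$, so dividing by $n$ and letting $n\to\infty$ yields $H(\phi,U) = \alpha$. I expect the main obstacle to be the bookkeeping in the second step: verifying cleanly that $\alpha_{n+1}\le\alpha_n$ requires correctly identifying $T_{n+2}(\phi,U)/T_{n+1}(\phi,U)$ as a subquotient of $\phi T_{n+1}(\phi,U)/\phi T_n(\phi,U)$ via the second isomorphism theorem, keeping track of which sums and intersections appear; the rest is routine.
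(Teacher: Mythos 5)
Your proposal is correct and follows essentially the same strategy as the paper: show that $\{\alpha_n\}_n$ is non-increasing (hence stationary, being $\N$-valued) by exhibiting $T_{n+2}(\phi,U)/T_{n+1}(\phi,U)$ as a subquotient of the image of $T_{n+1}(\phi,U)/T_n(\phi,U)$ under $\phi$, and then conclude by the telescoping sum. The only difference is bookkeeping: you decompose the trajectory as $T_{n+1}(\phi,U)=U+\phi T_n(\phi,U)$ and work with $\phi T_{n+1}(\phi,U)$, whereas the paper writes $T_{n+1}(\phi,U)=T_n(\phi,U)+\phi^n U$ and routes the comparison through the auxiliary quotient $B_n=\phi^n U/(\phi T_{n-1}(\phi,U)\cap\phi^n U)$; both yield the same inequality $\alpha_{n+1}\leq\alpha_n$.
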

\begin{proof}
For every $n>1$, $T_{n+1}(\phi,U)=T_n(\phi,U)+\phi^nU$ and  $\phi T_{n-1}(\phi,U)\leq T_{n}(\phi,U)$. Thus 
$$\frac{T_{n+1}(\phi,U)}{T_n(\phi,U)}\cong\frac{\phi^n U}{T_n(\phi,U)\cap\phi^n U}$$ is a quotient of 
$$B_n=\frac{\phi^n U}{\phi T_{n-1}(\phi,U)\cap \phi^n U}.$$ Therefore $\alpha_{n}\leq\dim B_n$. %by Proposition~\ref{prop:lc properties}(c,d).
Moreover, since $\phi T_n(\phi,U)=\phi T_{n-1}(\phi,U)+\phi^n U$,
$$B_n\cong\frac{\phi T_{n-1}(\phi,U)+\phi^n U}{\phi T_{n-1}(\phi,U)}=\frac{\phi T_n(\phi,U)}{\phi T_{n-1}(\phi,U)}\cong \frac{T_n(\phi,U)}{T_{n-1}(\phi,U)+(T_n(\phi,U)\cap \ker\phi)};$$ 
the latter vector space is a quotient of ${T_n(\phi,U)}/{T_{n-1}(\phi,U)}$, so $\dim B_n\leq \alpha_{n-1}$.
Hence $\alpha_{n}\leq \alpha_{n-1}$. Thus $\{\alpha_n\}_n$ is a decreasing sequence of non-negative integers, therefore stationary.
Since $U\leq T_n(\f,U)\leq T_{n+1}(\f,U)$, 
\begin{equation}\label{eqa1}
\alpha_{n}=\dim\frac{T_{n+1}(\f,U)}{U}-\dim\frac{T_n(\f,U)}{U}.
\end{equation}
As $\{\alpha_n\}_{n}$ is stationary, there exist $n_0>0$ and $\alpha\geq 0$ such that $\alpha_n=\alpha$ for every $n\geq n_0$. 
If $\alpha=0$, equivalently $\dim\frac{T_{n+1}(\f,U)}{U}=\dim\frac{T_n(\f,U)}{U}$ for every $n\geq n_0$, and hence $H(\f,U)=0$. If $\alpha>0$,  by \eqref{eqa1} we have that for every $n\in\N$
$$\dim\frac{T_{n_0+n}(\phi,U)}{U}=\dim\frac{T_{n_0}(\phi,U)}{U}+n\alpha.$$
Thus, 
$$H(\f,U)=\lim_{n\to \infty} \frac{1}{n+n_0} \dim \frac{T_{n_0+n}(\f,U)}{U}=\lim_{n\to\infty}\frac{\dim\frac{T_{n_0}(\f,U)}{U}+n \alpha}{n+n_0}=\alpha.$$
This concludes the proof.
\end{proof}

Proposition \ref{entvalue} yields that the value of $\ent(\phi)$ is either a non-negative integer or $\infty$. Moreover, Example~\ref{bernoulli} below witnesses that $\ent$ takes all values in $\N\cup\{\infty\}$.

\medskip
We see now that the algebraic entropy $\ent$ coincides with $\ent_{\dim}$ on discrete vector spaces.

\begin{lemma}\label{entdim}
Let $V$ be a discrete vector space and $\f\colon V\to V$ an endomorphism. Then $$\ent(\phi)=\ent_{\dim}(\phi).$$
\end{lemma}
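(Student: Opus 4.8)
The plan is to unwind both definitions and reconcile them using the structure of $\BV$ when $V$ is discrete. First I would observe that a discrete vector space $V$ is linearly topologized with the singleton $\{0\}$ as a neighborhood basis at $0$, and that a linear subspace of $V$ is linearly compact if and only if it is finite-dimensional, by Proposition 2.1(d). Hence $\BV$ is exactly the family of finite-dimensional linear subspaces of $V$ (these are automatically open, since everything is open in the discrete topology). So the supremum defining $\ent(\phi)$ ranges over the same index set — finite-dimensional subspaces $U = F$ — as the supremum defining $\ent_{\dim}(\phi)$.

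Next I would compare the two "local" quantities $H(\phi, U)$ and $H_{\dim}(\phi, F)$ for a fixed finite-dimensional $F = U$. Using the partial-trajectory notation, $T_n(\phi, F) = F + \phi F + \ldots + \phi^{n-1}F$ in both settings, so
\[
H_{\dim}(\phi, F) = \lim_{n\to\infty}\frac{1}{n}\dim T_n(\phi, F),
\qquad
H(\phi, F) = \lim_{n\to\infty}\frac{1}{n}\dim\frac{T_n(\phi, F)}{F}.
\]
Since $F$ is finite-dimensional and $F \leq T_n(\phi, F)$, we have $\dim T_n(\phi,F) = \dim\frac{T_n(\phi,F)}{F} + \dim F$, so the two limits differ by $\frac{1}{n}\dim F \to 0$. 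Therefore $H(\phi, F) = H_{\dim}(\phi, F)$ for every finite-dimensional $F$, and taking suprema over all such $F$ gives $\ent(\phi) = \ent_{\dim}(\phi)$.

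There is essentially no serious obstacle here; the only point requiring a word of care is the identification of $\BV$ with the finite-dimensional subspaces, which relies on the fact that in a discrete space every subspace is open and that "linearly compact" collapses to "finite-dimensional" via Proposition 2.1(d) — together with the definition of $\ent_{\dim}$ recalled in the Introduction, whose supremum is taken precisely over finite-dimensional $F \leq V$. Once that identification is in place, the computation above is immediate and the proof is complete.
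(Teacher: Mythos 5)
Your proposal is correct and follows essentially the same route as the paper: identify $\BV$ with the finite-dimensional subspaces via Proposition~\ref{prop:lc properties}(d), note that $\dim T_n(\phi,F)$ and $\dim\frac{T_n(\phi,F)}{F}$ differ by the constant $\dim F$, which vanishes after dividing by $n$, and take suprema. No gaps.
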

\begin{proof}
Note that $\mathcal B(V)=\{F\leq V:\dim F<\infty\}$. Let now $F\in\BV$. Then 
%\begin{align*}
%H(\phi,F)&=\lim_{n\to\infty}\frac{1}{n}\dim\frac{T_n(\phi,F)}{F}\\
%&=\lim_{n\to\infty}\frac{1}{n}\left(\dim{T_n(\phi,F)}-\dim{F}\right)\\
%&=\lim_{n\to\infty}\frac{1}{n}\dim T_n(\phi,F)=H_{\dim}(\phi,F).
%\end{align*}
\begin{equation*}\begin{split}
H(\phi,F)=\lim_{n\to\infty}\frac{1}{n}\dim\frac{T_n(\phi,F)}{F}=\lim_{n\to\infty}\frac{1}{n}\left(\dim{T_n(\phi,F)}-\dim{F}\right)=\\=\lim_{n\to\infty}\frac{1}{n}\dim T_n(\phi,F)=H_{\dim}(\phi,F).
\end{split}\end{equation*}
It follows from the definitions that $\ent(\phi)=\ent_{\dim}(\phi)$.
\end{proof}

We compute now the algebraic entropy in the easiest case of the identity automorphism.

\begin{example}\label{ex:id} 
\begin{itemize}
\item[(a)] Let $\f\colon V\to V$ be a continuous endomorphism of an l.l.c.\! vector space $V$.
Then $H(\f,U)=0$ for every $U\in\B(V)$ which is $\phi$-invariant.

\item[(b)] Let $\f=\mathrm{id}_V$. Since every element of $\BV$ is $\f$-invariant, (a) easily implies $\ent(\mathrm{id}_V)=0$.
\end{itemize}
\end{example}

Inspired by the above example we provide now the general case of when the algebraic entropy is zero.

\begin{proposition} 
Let $V$ be an l.l.c.\! vector space, $\f:V\to V$ a continuous endomorphism and $U\in\BV$. Then the following conditions are equivalent:
\begin{enumerate}[(a)]
\item $H(\phi,U)=0$;
\item there exists $n\in\N_+$ such that $T(\phi,U)=T_n(\phi,U)$;  
\item $T(\phi,U)$ is linearly compact.
\end{enumerate}
In particular, $\ent(\phi)=0$ if and only if $T(\phi,U)$ is linearly compact for all $U\in\BV$.
\end{proposition}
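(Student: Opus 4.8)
The plan is to prove the chain of implications $(a)\Rightarrow(b)\Rightarrow(c)\Rightarrow(a)$, and then to deduce the final clause about $\ent(\phi)$. The key tool throughout is Proposition~\ref{entvalue}, which describes $H(\phi,U)$ as the eventual constant value $\alpha$ of the decreasing sequence $\alpha_n=\dim\bigl(T_{n+1}(\phi,U)/T_n(\phi,U)\bigr)$.

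For $(a)\Rightarrow(b)$: by Proposition~\ref{entvalue}, $H(\phi,U)=0$ means $\alpha=0$, i.e.\ there is $n_0$ with $\alpha_n=0$ for all $n\geq n_0$; equivalently $T_{n+1}(\phi,U)=T_n(\phi,U)$ for all $n\geq n_0$. An easy induction (using $T_{m+1}(\phi,U)=T_m(\phi,U)+\phi^m U$) then gives $T_m(\phi,U)=T_{n_0}(\phi,U)$ for all $m\geq n_0$, whence $T(\phi,U)=\bigcup_m T_m(\phi,U)=T_{n_0}(\phi,U)$, so $(b)$ holds with $n=n_0$. For $(b)\Rightarrow(c)$: if $T(\phi,U)=T_n(\phi,U)$ then $T(\phi,U)$ is linearly compact, since each $T_n(\phi,U)\in\BV$ as already observed in the text (it is a finite union of cosets of the linearly compact $U$, hence linearly compact by Proposition~\ref{prop:lc properties}(c,f)). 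For $(c)\Rightarrow(a)$: suppose $T(\phi,U)$ is linearly compact. Since $U$ is open in $V$, it is open in $T(\phi,U)$, so $T(\phi,U)/U$ has finite dimension by Proposition~\ref{prop:lc properties}(d,e). The subspaces $T_n(\phi,U)/U$ form an increasing chain inside the finite-dimensional $T(\phi,U)/U$ with union the whole space, so the chain stabilizes: there is $n_0$ with $T_n(\phi,U)=T_{n_0}(\phi,U)$ for all $n\geq n_0$. Then $\dim\bigl(T_n(\phi,U)/U\bigr)$ is eventually constant, so $H(\phi,U)=\lim_n \frac1n\dim\bigl(T_n(\phi,U)/U\bigr)=0$.

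For the final assertion: if $\ent(\phi)=0$ then $H(\phi,U)=0$ for every $U\in\BV$, so by $(a)\Rightarrow(c)$ each $T(\phi,U)$ is linearly compact. Conversely, if $T(\phi,U)$ is linearly compact for every $U\in\BV$, then by $(c)\Rightarrow(a)$ we get $H(\phi,U)=0$ for all $U\in\BV$, hence $\ent(\phi)=\sup_{U\in\BV}H(\phi,U)=0$.

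I do not expect a genuine obstacle here: the argument is essentially bookkeeping with the chain $\{T_n(\phi,U)\}_n$ and the cited properties of linear compactness. The one point to be slightly careful about is the direction $(c)\Rightarrow(a)$, where one must invoke that an open subspace of a linearly compact space has finite codimension (Proposition~\ref{prop:lc properties}(d,e)) to force the chain $T_n(\phi,U)/U$ to stabilize — without local linear compactness this step would fail. Everything else is immediate from Proposition~\ref{entvalue} and the observations preceding it.
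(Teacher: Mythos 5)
Your proof is correct and follows essentially the same route as the paper's: $(a)\Rightarrow(b)$ via the stabilization of the sequence $\alpha_n$ from Proposition~\ref{entvalue}, $(b)\Rightarrow(c)$ from $T_n(\phi,U)\in\BV$, and $(c)\Rightarrow(a)$ from the finite codimension of $U$ in the linearly compact $T(\phi,U)$ forcing the chain $\{T_n(\phi,U)/U\}_n$ to stabilize. The final clause is handled identically; no issues.
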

\begin{proof} 
(a)$\Rightarrow$(b) If $H(\phi,U)=0$, then $\dim\frac{T_{n+1}(\phi,U)}{T_n(\phi,U)}=0$ eventually by Proposition~\ref{entvalue}. Therefore, the chain of linearly compact open linear subspaces $\left\{ T_n(\phi,U) \right\}_{n\in\N}$ is stationary.
%$$U\leq\ldots\leq T_{n-1}(\phi,U)\leq T_{n}(\phi,U)\leq T_{n+1}(\phi,U)\leq\ldots$$ is stationary. 

(b)$\Rightarrow$(c) is clear from the definition. %that is, $T(\phi,U)=T_{n_U}(\phi,U)$.

(c)$\Rightarrow$(a) If $T(\phi,U)$ is linearly compact, by Proposition~\ref{prop:lc properties}(d,e) we have that $\frac{T(\phi,U)}{U}$ is finite-dimensional. Since $T(\phi,U)=\bigcup_{n\in \N_+}T_n(\phi,U)$, it follows that
$$\frac{T(\phi,U)}{U}=\bigcup_{n\in \N_+}\frac{T_n(\phi,U)}{U}$$
and so the chain $\left\{\frac{T_{n}(\phi,U)}{U}\right\}_{n\in\N}$
%$$\ldots\leq \frac{T_{n-1}(\phi,U)}{U}\leq \frac{T_{n}(\phi,U)}{U}\leq \frac{T_{n+1}(\phi,U)}{U}\leq\ldots$$
is stationary. Therefore, $H(\phi,U)=0$.
\end{proof}

As a consequence we see that $\ent$ always vanishes on linearly compact vector spaces.

\begin{corollary} \label{lc0}
If $V$ is a linearly compact vector space and $\phi\colon V\to V$ a continuous endomorphism, then $\ent(\f)=0$. In particular, if $V$ is a finite dimensional vector space, then $\ent(\f)=0$.
\end{corollary}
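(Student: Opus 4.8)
The statement to prove is Corollary~\ref{lc0}: if $V$ is linearly compact and $\phi\colon V\to V$ is a continuous endomorphism, then $\ent(\phi)=0$; in particular $\ent(\phi)=0$ when $\dim V<\infty$.

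The plan is to deduce this immediately from the preceding Proposition, which characterizes $\ent(\phi)=0$ as the condition that $T(\phi,U)$ is linearly compact for every $U\in\BV$. So the whole task reduces to checking that, when $V$ itself is linearly compact, $T(\phi,U)$ is linearly compact for each $U\in\BV$.

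First I would note that when $V$ is linearly compact, $V$ itself is an open linear subspace of $V$ (the whole space is always open), and it is linearly compact by hypothesis, so $V\in\BV$; moreover $\BV$ is a local basis at $0$ and every $U\in\BV$ satisfies $U\le V$. Now fix any $U\in\BV$. The trajectory $T(\phi,U)=\bigcup_{n\in\N_+}T_n(\phi,U)$ is a linear subspace of $V$ contained in $V$, and it is open in $V$ (it contains the open set $U$). Then I would invoke Proposition~\ref{prop:lc properties}: an open linear subspace of a linearly compact vector space has finite codimension (by (d) and (e), since $V/T(\phi,U)$ is a discrete linearly compact space, hence finite-dimensional), and in particular $T(\phi,U)$ is closed in $V$; then by Proposition~\ref{prop:lc properties}(b) a closed linear subspace of a linearly compact space is linearly compact. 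Hence $T(\phi,U)$ is linearly compact for every $U\in\BV$, and the Proposition gives $\ent(\phi)=0$.

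For the final sentence, I would simply recall that a finite-dimensional vector space is linearly compact by Proposition~\ref{prop:lc properties}(d), so it is a special case of the first assertion. There is essentially no obstacle here: the only thing to be careful about is making the reduction clean — one must observe that $T(\phi,U)$ is open (so that the finite-codimension / closedness argument applies) and that it lies inside the linearly compact space $V$, after which the cited structural properties of linearly compact spaces do all the work. An alternative, equally short route would be to argue directly that since $\dim(V/U)<\infty$ the increasing chain $\{T_n(\phi,U)/U\}_n$ of subspaces of the finite-dimensional space $V/U$ must stabilize, so $T(\phi,U)=T_n(\phi,U)$ for some $n$, which is condition (b) of the Proposition; I would likely present this version as it avoids even invoking closedness.

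\begin{proof}
Since $V$ is linearly compact, $V$ is itself a linearly compact open linear subspace of $V$, so $V\in\BV$, and every $U\in\BV$ satisfies $U\le V$. Fix $U\in\BV$. As $U$ is open in $V$ and $V$ is linearly compact, $V/U$ is a discrete linearly compact vector space, hence finite-dimensional by Proposition~\ref{prop:lc properties}(d,e). Therefore the increasing chain $\{T_n(\phi,U)/U\}_{n\in\N_+}$ of linear subspaces of the finite-dimensional vector space $V/U$ is stationary, so there exists $n\in\N_+$ with $T(\phi,U)=\bigcup_{m\in\N_+}T_m(\phi,U)=T_n(\phi,U)$. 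By the previous Proposition (condition (b)) this gives $H(\phi,U)=0$. As $U\in\BV$ was arbitrary, $\ent(\phi)=\sup\{H(\phi,U)\mid U\in\BV\}=0$.

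In particular, if $V$ is finite-dimensional, then $V$ is linearly compact by Proposition~\ref{prop:lc properties}(d), so $\ent(\phi)=0$.
\end{proof}
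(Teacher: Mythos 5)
Your proof is correct and follows essentially the same route as the paper, which states the corollary as an immediate consequence of the preceding Proposition characterizing when $H(\phi,U)=0$: since $V$ is linearly compact, for each $U\in\BV$ the trajectory $T(\phi,U)$ satisfies one of the equivalent conditions (you verify (b) via the stabilizing chain in the finite-dimensional quotient $V/U$; one could equally verify (c) by noting $T(\phi,U)$ is open, hence closed, hence linearly compact in $V$). The finite-dimensional case via Proposition~\ref{prop:lc properties}(d) is handled exactly as intended.
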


The next result shows that when $\ent(\phi)$ is finite, this value is realized on some $U\in\BV$.

\begin{lemma}\label{ent=H}
Let $V$ be an l.l.c.\! vector space and $\phi:V\to V$ a continuous endomorphism. If $\ent(\phi)$ is finite, then there exists $U\in\BV$ such that $\ent(\phi)=H(\phi,U)$.
\end{lemma}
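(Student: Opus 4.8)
The plan is to show that the supremum defining $\ent(\phi)$ is attained by exhibiting a single $U \in \BV$ on which $H(\phi, U)$ equals $\ent(\phi)$. The key observation is that $H$ behaves well with respect to enlarging $U$: if $U \leq U'$ are both in $\BV$, then $T_n(\phi, U) \leq T_n(\phi, U')$ and $U'/U$ is finite-dimensional, so $\dim(T_n(\phi,U')/U') \geq \dim(T_n(\phi,U)/U) - \dim(U'/U)$, which after dividing by $n$ and passing to the limit gives $H(\phi, U') \geq H(\phi, U)$. Thus $H(\phi,-)$ is monotone on $\BV$ with respect to inclusion, and moreover any two elements of $\BV$ have a common upper bound in $\BV$ (namely their sum, which is open and linearly compact by Proposition~\ref{prop:lc properties}(c,f)), so $\{H(\phi, U) : U \in \BV\}$ is a directed set.

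First I would fix the hypothesis that $\ent(\phi) =: s < \infty$. Since $s$ is the supremum of the values $H(\phi, U)$, and these values are non-negative integers by Proposition~\ref{entvalue}, the supremum is actually a maximum: there exists $U_0 \in \BV$ with $H(\phi, U_0) = s$. Indeed, pick $U_0$ with $H(\phi, U_0) \geq s$; since $H(\phi, U_0) \leq s$ always, equality holds. This already finishes the proof, because the value set is a subset of $\N$ bounded above by $s$, so $s \in \N$ and is achieved.

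The only subtlety worth spelling out is why $s$ being finite forces it into $\N$ and hence attained, rather than being an irrational limit of integers: this is immediate since every $H(\phi,U) \in \N$ by Proposition~\ref{entvalue}, so a finite supremum of a set of natural numbers is a natural number and belongs to the set. Hence there is genuinely nothing hard here once Proposition~\ref{entvalue} is in hand; the monotonicity remark is not even strictly needed for the bare statement, though it clarifies the structure. I would therefore write the proof as: assume $\ent(\phi) = s < \infty$; by Proposition~\ref{entvalue} each $H(\phi,U)$ lies in $\N$, so $s = \sup\{H(\phi,U) : U \in \BV\} \in \N$; by definition of supremum there is $U \in \BV$ with $H(\phi,U) > s - 1$, hence $H(\phi,U) = s = \ent(\phi)$, as required. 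The main (minor) obstacle is simply making sure the appeal to Proposition~\ref{entvalue} — that each $H(\phi,U)$ is a non-negative integer — is invoked explicitly, since without integrality a finite supremum need not be attained.
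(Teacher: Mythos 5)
Your proof is correct and takes essentially the same route as the paper: both arguments rest on Proposition~\ref{entvalue} giving $H(\phi,U)\in\N$, so that a finite supremum of a bounded set of non-negative integers is a maximum and hence attained. The monotonicity digression is, as you note, unnecessary, and the phrase ``pick $U_0$ with $H(\phi,U_0)\geq s$'' in your middle paragraph presupposes what is being proved, but your final formal write-up (choosing $U$ with $H(\phi,U)>s-1$ and using integrality) is clean and matches the paper's argument.
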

\begin{proof}
Since $\ent(\phi)$ is finite and $H(\phi,U)\in\N$ for every $U\in\BV$ by Proposition \ref{entvalue}, the subset $\{H(\phi,U):U\in\BV\}$ of $\N$ is bounded, hence finite. Therefore, $$\ent(\phi)=\sup\{H(\phi,U)\mid U\in\BV\}=\max\{H(\phi,U)\mid U\in\BV\};$$ in other words, $\ent(\phi)=H(\phi,U)$ for some $U\in\BV$ as required.
\end{proof}

We prove now the monotonicity of $H(\f,-)$ on the family $\BV$ ordered by inclusion. 

\begin{lemma}\label{lem:mono}
Let $V$ be an l.l.c.\! vector space and $\f:V\to V$ a continuous endomorphism. If $U,U'\in\BV$ are such that $U'\leq U$, then $H(\f,U')\leq H(\f,U)$.
\end{lemma}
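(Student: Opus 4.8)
The plan is to reduce the comparison of $H(\f,U')$ and $H(\f,U)$ to a comparison of the corresponding partial trajectories, exploiting that $U'\le U$ are both linearly compact open subspaces with $U/U'$ finite-dimensional. First I would observe that since $U'\le U$ and $\f^iU'\le\f^iU$ for every $i$, we get $T_n(\f,U')\le T_n(\f,U)$ for all $n\in\N_+$. Also $U'\le U\le T_n(\f,U')+ U$, and in fact $T_n(\f,U)= T_n(\f,U')+U$: the inclusion $\supseteq$ is clear, and $\subseteq$ holds because $\f^iU\le \f^iU'+\f^iU$ and more directly because $U=U'+F$ for a finite-dimensional $F\le U$ (using Proposition~\ref{prop:lc properties}(d,e)), so $\f^iU=\f^iU'+\f^iF\le T_{i+1}(\f,U')+\f^iF$; hmm, this last step needs a little care, so instead I would argue purely from $T_n(\f,U)\le T(\f,U')+U$ and the fact that $T(\f,U')$ is $\f$-invariant and contains $U'$.

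Cleaner: the key identity is $T_n(\f,U)= T_n(\f,U') + U$ for every $n\ge 1$. Indeed $T_n(\f,U')+U \le T_n(\f,U)$ trivially; for the reverse, write $U=U'+F$ with $\dim F<\infty$, then $\f^i U=\f^iU'+\f^iF$, hence $T_n(\f,U)=T_n(\f,U')+\sum_{i=0}^{n-1}\f^iF = T_n(\f,U')+T_n(\f,F)$. This is not quite $T_n(\f,U')+U$, so the honest route is to compare dimensions directly. From $T_n(\f,U')\le T_n(\f,U)$ and $U'\le U$ we have, by the Second Isomorphism Theorem applied inside the finite-dimensional quotient $T_n(\f,U)/U'$,
\begin{equation*}
\dim\frac{T_n(\f,U)}{U'}=\dim\frac{T_n(\f,U)}{T_n(\f,U')+U}+\dim\frac{T_n(\f,U')+U}{U'}.
\end{equation*}
Now $\dim\frac{T_n(\f,U')+U}{U'}\le \dim\frac{T_n(\f,U')}{U'}+\dim\frac{U}{U'}$ (again Second Isomorphism), and $\dim\frac{T_n(\f,U)}{U'}=\dim\frac{T_n(\f,U)}{U}+\dim\frac{U}{U'}$; subtracting $\dim\frac{U}{U'}$ from both sides of the displayed equation and dropping the nonnegative term $\dim\frac{T_n(\f,U)}{T_n(\f,U')+U}$, hmm, this gives an inequality in the wrong direction. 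So the truly clean argument is: $T_n(\f,U)=T_n(\f,U')+T_n(\f,F)$ with $\dim T_n(\f,F)\le \dim F\cdot n$ is useless for the limit; instead use that $\dim\frac{T_n(\f,U)}{T_n(\f,U')}\le \dim\frac{U}{U'}\cdot$ something — no.

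The robust approach I would actually commit to: use Proposition~\ref{entvalue}. We have $H(\f,U')=\alpha'$ and $H(\f,U)=\alpha$, the eventual constant values of $\alpha'_n=\dim\frac{T_{n+1}(\f,U')}{T_n(\f,U')}$ and $\alpha_n=\dim\frac{T_{n+1}(\f,U)}{T_n(\f,U)}$. Since $T_{n+1}(\f,U)=T_n(\f,U)+\f^nU$ and $T_{n+1}(\f,U')=T_n(\f,U')+\f^nU'$, and since $U=U'+F$ with $\dim F<\infty$ gives $\f^nU=\f^nU'+\f^nF$, we get $T_{n+1}(\f,U)=T_{n+1}(\f,U')+\f^nF$, whence $\frac{T_{n+1}(\f,U)}{T_n(\f,U)}$ is a quotient of $\frac{T_{n+1}(\f,U')+\f^nF}{T_n(\f,U')}$, so $\alpha_n\le \alpha'_n+\dim\f^nF\le \alpha'_n+\dim F$. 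That still has an additive error. To kill it, iterate: the same reasoning with $T_n(\f,U)=T_n(\f,U')+T_{n}(\f,F)$ and $\f T_{n-1}(\f,U)\le T_n(\f,U)$ shows, exactly as in the proof of Proposition~\ref{entvalue}, that $\dim\frac{T_{n+1}(\f,U)}{T_n(\f,U)}\le \dim\frac{T_{n}(\f,U)}{T_{n-1}(\f,U)}$ is already known; what we want is $\alpha\le\alpha'$. For this, note $\dim\frac{T_{n+1}(\f,U)}{T_n(\f,U)} = \dim\frac{\f^nU}{\f^nU\cap T_n(\f,U)}$ and $\f^nU\cap T_n(\f,U)\supseteq \f^nU'\cap T_n(\f,U')+$ nothing clean.

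Given the length constraints, I will phrase the final proof as follows. \textbf{Step 1:} Reduce to $\dim(U/U')=1$ by a trivial induction on $\dim(U/U')$, since any chain $U'=U_0\le U_1\le\cdots\le U_k=U$ with each step one-dimensional can be interpolated inside $\BV$ (each $U_j\in\BV$ by Proposition~\ref{prop:lc properties}(c)). \textbf{Step 2:} With $\dim(U/U')=1$, write $U=U'+\K v$. Then $\f^iU=\f^iU'+\K\f^iv$, so $T_n(\f,U)=T_n(\f,U')+\sum_{i<n}\K\f^iv$, and therefore $\dim\frac{T_{n+1}(\f,U)}{T_n(\f,U)}\le\dim\frac{T_{n+1}(\f,U')}{T_n(\f,U')}+1$ — but more importantly, the chain $\{T_n(\f,U)\}_n$ stabilizes at the same index pattern, and a direct computation of $\dim\frac{T_n(\f,U)}{U}-\dim\frac{T_n(\f,U')}{U'}$ shows it is eventually constant (it is the dimension of $\bigl(\sum_{i<n}\K\f^iv + T_n(\f,U')\bigr)/T_n(\f,U')$ shifted by $-1$, a bounded nondecreasing integer sequence), so dividing by $n$ and letting $n\to\infty$ both difference-terms contribute $0$ to the limit: $H(\f,U)-H(\f,U')=\lim_n\frac1n\bigl(\dim\frac{T_n(\f,U)}{U}-\dim\frac{T_n(\f,U')}{U'}\bigr)\ge 0$ because $T_n(\f,U)\supseteq T_n(\f,U')$ forces $\dim\frac{T_n(\f,U)}{U}\ge\dim\frac{T_n(\f,U')}{U'}-\dim\frac{U}{U'}$, wait — I need $\ge$, and indeed $\dim\frac{T_n(\f,U)}{U'}\ge\dim\frac{T_n(\f,U')}{U'}$ gives $\dim\frac{T_n(\f,U)}{U}=\dim\frac{T_n(\f,U)}{U'}-\dim\frac{U}{U'}\ge\dim\frac{T_n(\f,U')}{U'}-\dim\frac{U}{U'}$, and since $\dim\frac{U}{U'}$ is a fixed finite constant, $\frac1n$ times it vanishes in the limit, yielding $H(\f,U)\ge H(\f,U')$.

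\textbf{The main obstacle} is precisely this bookkeeping: the naive inclusion $T_n(\f,U')\le T_n(\f,U)$ only gives the wanted inequality up to the additive constant $\dim(U/U')$, which is harmless in the limit $\frac1n(\cdot)\to 0$ but must be tracked carefully so as not to accidentally divide a growing quantity. The clean statement to extract is: $\dim\frac{T_n(\f,U')}{U'}\le\dim\frac{T_n(\f,U)}{U}+\dim\frac{U}{U'}$ for all $n$, which follows from $U'\le U\le T_n(\f,U)\supseteq T_n(\f,U')$ by two applications of the Second Isomorphism Theorem inside the finite-dimensional space $T_n(\f,U)/U'$; dividing by $n$ and passing to the limit gives $H(\f,U')\le H(\f,U)$ directly, with no need for the reduction to codimension one.
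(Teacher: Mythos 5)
Your final argument is correct and is essentially the paper's proof: the key inequality $\dim\frac{T_n(\f,U')}{U'}\le\dim\frac{T_n(\f,U)}{U}+\dim\frac{U}{U'}$, obtained from $T_n(\f,U')\le T_n(\f,U)$ and the tower $U'\le U\le T_n(\f,U)$, combined with the observation that the constant $\dim\frac{U}{U'}$ dies after dividing by $n$, is exactly what the paper uses (phrased there via $T_n(\f,U')+U\le T_n(\f,U)$). The many abandoned detours (reduction to codimension one, the route through Proposition~\ref{entvalue}) are unnecessary and should be deleted, but the committed conclusion stands.
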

\begin{proof} 
For $n\in\N_+$, since $T_n(\f,U')+U$ is a linear subspace of $T_n(\f,U)$, we have
\begin{equation*}
\frac{T_n(\f,U')/U'}{(T_n(\f,U')\cap U)/U'}\cong \frac{T_n(\f,U')}{T_n(\f,U')\cap U}\cong\frac{T_n(\f,U')+U}{U}\leq \frac{T_n(\f,U)}{U}.
\end{equation*}
Thus,
$$\dim\frac{T_n(\f,U')}{U'}\leq\dim\frac{T_n(\f,U)}{U}+\dim\frac{T_n(\f,U')\cap U}{U'}.$$
Finally, since $\dim\frac{T_n(\f,U')\cap U}{U'}\leq \dim\frac{U}{U'}$, which is constant, for $n\to \infty$ we obtain the thesis.
\end{proof}

Let $(I,\leq)$ be a poset. A subset $J\subseteq I$ is said to be \emph{cofinal} in $I$ if for every $i\in I$ there exists $j\in J$ such that $i\leq j$. The following consequence of Lemma \ref{lem:mono} permits to compute the algebraic entropy on a cofinal subset of $\BV$ ordered by inclusion. 

\begin{corollary}\label{cor:base}
Let $V$ be an l.l.c.\! vector space and $\phi: V \to V$ a continuous endomorphism. 
\begin{enumerate}[(a)]
\item If $\B$ is a cofinal subset of $\BV$, then $\ent(\phi)=\sup\{H(\phi,U)\mid U\in\B\}$.
\item If $U_0\in\BV$ and $\mathcal B=\{U\in\BV:U_0\leq U\}$, then $\ent(\phi)=\sup\{H(\phi,U)\mid U\in\B\}$.
\end{enumerate}
\end{corollary}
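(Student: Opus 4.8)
The plan is to derive both parts directly from the monotonicity of $H(\phi,-)$ recorded in Lemma~\ref{lem:mono}, together with the definition of $\ent$ as a supremum over $\BV$.

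For part (a), one inequality is immediate: since $\B\subseteq\BV$, we have $\sup\{H(\phi,U)\mid U\in\B\}\le\sup\{H(\phi,U)\mid U\in\BV\}=\ent(\phi)$. For the reverse inequality I would fix an arbitrary $U\in\BV$, use the cofinality of $\B$ to choose $U'\in\B$ with $U\le U'$, and then invoke Lemma~\ref{lem:mono} to obtain $H(\phi,U)\le H(\phi,U')\le\sup\{H(\phi,W)\mid W\in\B\}$. Taking the supremum over all $U\in\BV$ gives $\ent(\phi)\le\sup\{H(\phi,W)\mid W\in\B\}$, and equality follows.

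For part (b), the plan is to verify that $\mathcal B=\{U\in\BV:U_0\le U\}$ is cofinal in $\BV$ and then apply (a). Given $U\in\BV$, I consider $U+U_0$: it is open because it contains the open subspace $U$, and it is linearly compact because it is the image of the linearly compact vector space $U\oplus U_0$ (Proposition~\ref{prop:lc properties}(f)) under the continuous addition map $U\oplus U_0\to V$, $(u,v)\mapsto u+v$, hence linearly compact by Proposition~\ref{prop:lc properties}(c). Thus $U+U_0\in\BV$; since $U_0\le U+U_0$ we get $U+U_0\in\mathcal B$, and $U\le U+U_0$, so $\mathcal B$ is cofinal in $\BV$. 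Part (a) then yields $\ent(\phi)=\sup\{H(\phi,U)\mid U\in\mathcal B\}$.

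There is no genuine obstacle here: the argument is a short combination of Lemma~\ref{lem:mono} with the observation that sums of linearly compact open subspaces stay in $\BV$. The only point demanding a moment's care is precisely this last closure property of $\BV$ under finite sums, which is what makes the relevant subsets cofinal and is supplied by Proposition~\ref{prop:lc properties}(c,f).
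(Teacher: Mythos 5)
Your proof is correct and follows essentially the same route as the paper: part (a) is the immediate combination of Lemma~\ref{lem:mono} with the definition of $\ent$, and part (b) reduces to the cofinality of $\mathcal B$, which rests on the closure of $\BV$ under finite sums via Proposition~\ref{prop:lc properties}(c,f). The paper states these steps more tersely, but your justification that $U+U_0$ is open (containing the open subspace $U$) and linearly compact (as a continuous image of $U\oplus U_0$) is exactly the argument the paper uses elsewhere for $T_n(\phi,U)\in\BV$.
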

\begin{proof}
(a) follows immediately from Lemma~\ref{lem:mono} and the definition.

(b) Since $U_0+U\in\mathcal B$ for every $U\in\BV$, it follows that that $\mathcal B$ is cofinal in $\BV$, so item (a) gives the thesis.
\end{proof}

The following result simplifies the computation of the algebraic entropy in several cases.

\begin{lemma}\label{lem:finite part}
Let $V$ be an l.l.c.\! vector space, $\f\colon V\to V$ a continuous endomorphism and $U\in\BV$. Then there exists a finite-dimensional linear subspace $F$ of $U$ such that, for every $n\in\N_+$, $$T_n(\phi,U)=U+T_n(\phi,F).$$
\end{lemma}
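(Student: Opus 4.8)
The plan is to extract $F$ as a finite-dimensional ``complement'' of $U\cap\phi^{-1}U$ inside $U$, and then to verify by an easy induction that this single finite-dimensional piece already generates, together with $U$, every partial trajectory $T_n(\phi,U)$.

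First I would exploit the continuity of $\phi$. Since $U\in\BV$ is open, $\phi^{-1}U$ is an open linear subspace of $V$, so $U\cap\phi^{-1}U$ is an open linear subspace of the linearly compact space $U$; by Proposition~\ref{prop:lc properties}(d,e) the quotient $U/(U\cap\phi^{-1}U)$ is discrete and linearly compact, hence finite-dimensional. Picking finitely many vectors of $U$ whose cosets span this quotient, I obtain a finite-dimensional linear subspace $F\leq U$ with
$$U=F+(U\cap\phi^{-1}U).$$

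Next I would prove, by induction on $k\geq 1$, the inclusion $\phi^k U\subseteq U+\phi F+\ldots+\phi^k F$. For $k=1$, applying $\phi$ to the identity above gives $\phi U=\phi F+\phi(U\cap\phi^{-1}U)\subseteq\phi F+U$, because $\phi(U\cap\phi^{-1}U)\subseteq U$ by the very definition of $\phi^{-1}U$. For the inductive step, applying $\phi$ to $\phi^k U\subseteq U+\phi F+\ldots+\phi^k F$ yields $\phi^{k+1}U\subseteq\phi U+\phi^2 F+\ldots+\phi^{k+1}F\subseteq U+\phi F+\ldots+\phi^{k+1}F$, where the last inclusion uses the already established case $k=1$. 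Summing these inclusions for $k=1,\ldots,n-1$ and adding $U$, I obtain
$$T_n(\phi,U)=U+\phi U+\ldots+\phi^{n-1}U\subseteq U+\phi F+\ldots+\phi^{n-1}F\subseteq U+T_n(\phi,F),$$
the last step being just $F\leq U$.

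The reverse inclusion $U+T_n(\phi,F)\subseteq T_n(\phi,U)$ is immediate: since $F\leq U$ we have $\phi^i F\leq\phi^i U\leq T_n(\phi,U)$ for every $0\leq i\leq n-1$, and $U=T_1(\phi,U)\leq T_n(\phi,U)$; the degenerate case $n=1$ is simply $U+F=U$. Combining the two inclusions gives the claimed equality for all $n\in\N_+$. I do not foresee a genuine obstacle here; the only point requiring care is the finiteness of $\dim\bigl(U/(U\cap\phi^{-1}U)\bigr)$, which is exactly the standard linear-compactness argument already used in the proof of Proposition~\ref{prop:basis}.
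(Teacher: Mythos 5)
Your proof is correct and essentially the same as the paper's: both hinge on producing a finite-dimensional $F\leq U$ with $U+\phi U=U+\phi F$ and then running a one-step induction. Your $F$, chosen so that $U=F+(U\cap\phi^{-1}U)$, automatically satisfies the paper's condition since $\phi(U\cap\phi^{-1}U)=\phi U\cap U$, so the two constructions coincide.
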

\begin{proof}
We proceed by induction on $n\in\N_+$. For $n=1$ it is obvious. Since $U$ has finite codimension in $T_2(\phi,U)=U+\phi U$, there exists a finite-dimensional linear subspace $F$ of $V$ contained in $U$ and such that $T_2(\phi,U)=U+\phi F=U+T_2(\phi,F)$.
Assume now that $T_n(\phi,U)=U+T_n(\phi,F)$ for some $n\in\N_+$, $n\geq 2$. Then
$$T_{n+1}(\phi,U)=U+\phi T_n(\phi,U)=U+\phi(U)+\phi T_n(\phi,F)=U+\phi F+\phi T_n(\phi,F)=U+T_{n+1}(\phi,F).$$
%\begin{eqnarray*}
%T_{n+1}(\phi,U)&=&U+\phi T_n(\phi,U)=U+\phi(U)+\phi T_n(\phi,F)\\
%                        &=&U+\phi F+\phi T_n(\phi,F)\\ 
%                        &=&U+T_{n+1}(\phi,F).
%\end{eqnarray*}
This concludes the proof.
\end{proof}

We end this section by discussing the relation of $\ent$ with $h_{alg}$.
Recall that a topological abelian group $G$ is \emph{compactly covered} if each element of $G$ is contained in some compact subgroup of $G$ (equivalently, the Pontryagin dual of $G$ is totally disconnected). If $G$ is a compactly covered locally compact abelian group, $\f\colon G\to G$ a continuous endomorphism and $U\in\ca B_{gr}(V)=\{U\leq G\mid\text{$U$ compact open}\}$, then (see~\cite[Theorem 2.3]{GBD})
$$h_{alg}(\f)=\sup\{ H_{alg}(\f,U)\mid U\in\mathcal B_{gr}(V)\}$$
where
$$H_{alg}(\f,U)=\lim_{n\to\infty}\frac{1}{n}\log\left|\frac{T_n(\f,U)}{U}\right|.$$

If $V$ is an l.l.c.\! vector space over a finite field $\F$, by Corollary~\ref{cor:tdlc} it is a totally disconnected locally compact abelian group. In particular $V$ is compactly covered, since $V$ is a torsion abelian group for $\F$ is finite. 

\begin{lemma}\label{halg}
Let $V$ be an l.l.c.\! vector space over a finite field $\F$ and let $\phi:V\to V$ be a continuous endomorphism. Then $$h_{alg}(\f)=\ent(\f)\cdot\log|\F|.$$
\end{lemma}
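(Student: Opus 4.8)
The plan is to reduce everything to a termwise comparison of the quantities $H_{alg}(\phi,U)$ and $H(\phi,U)\cdot\log|\F|$ for a single $U\in\BV$, and then take suprema. First I would observe that since $\F$ is finite, every linearly compact open subspace $U$ of $V$ is in particular a compact open subgroup, so by Corollary~\ref{cor:tdlc} we have $\BV\subseteq\ca B_{gr}(V)$; conversely, I would argue that $\BV$ is cofinal in $\ca B_{gr}(V)$ (or even that a compact open subgroup of a $\K$-vector space with $\K$ finite is automatically a $\K$-subspace, since it is closed under the finitely many scalar multiplications), so that the supremum defining $h_{alg}(\phi)$ may be computed over $\BV$ alone.

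The key computation is then purely finite-dimensional linear algebra over $\F$: for $U\in\BV$ and $n\in\N_+$, the quotient $T_n(\phi,U)/U$ is a finite-dimensional $\F$-vector space by Proposition~\ref{prop:lc properties}(d,e), and a $d$-dimensional $\F$-vector space has exactly $|\F|^{d}$ elements. Hence
\[
\log\left|\frac{T_n(\phi,U)}{U}\right| = \dim\frac{T_n(\phi,U)}{U}\cdot\log|\F|,
\]
and dividing by $n$ and letting $n\to\infty$ gives $H_{alg}(\phi,U)=H(\phi,U)\cdot\log|\F|$ for every $U\in\BV$ (the limit on the left exists because the limit on the right does, by Proposition~\ref{entvalue}).

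Finally I would take the supremum over $U\in\BV$ on both sides. On the right this gives $\ent(\phi)\cdot\log|\F|$ by definition (the positive constant $\log|\F|$ pulls out of the supremum, with the usual convention $\infty\cdot\log|\F|=\infty$); on the left, using the cofinality/coincidence of $\BV$ in $\ca B_{gr}(V)$ together with the analogue of Corollary~\ref{cor:base} for $h_{alg}$ — or simply the formula for $h_{alg}$ from \cite[Theorem 2.3]{GBD} restricted to the cofinal family $\BV$ — this gives $h_{alg}(\phi)$. Combining yields $h_{alg}(\phi)=\ent(\phi)\cdot\log|\F|$.

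The only genuinely delicate point is the reduction of the supremum over all compact open subgroups to the supremum over $\BV$: one must be sure that restricting to $\K$-subspaces does not lower the value of $h_{alg}$. The cleanest way is the remark that when $\K=\F$ is finite, a compact open \emph{subgroup} $U$ of $V$ need not be a subspace, but $\sum_{\lambda\in\F}\lambda U$ is a compact open subspace containing it, so $\BV$ is cofinal in $\ca B_{gr}(V)$; since $h_{alg}(\phi,-)$ is monotone under inclusion (the group-theoretic analogue of Lemma~\ref{lem:mono}, proved in \cite{GBD}), the two suprema agree. Everything else is bookkeeping.
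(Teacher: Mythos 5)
Your proposal is correct and follows essentially the same route as the paper: reduce to the cofinal family $\BV\subseteq\ca B_{gr}(V)$ via $U\mapsto\sum_{\lambda\in\F}\lambda U$, invoke the monotonicity/cofinality result for $h_{alg}$ (the paper cites \cite[Corollary 2.3]{Virili} here), and then use $\left|T_n(\f,U)/U\right|=|\F|^{\dim T_n(\f,U)/U}$ termwise. One caveat: your parenthetical alternative that a compact open subgroup of a $\K$-vector space with $\K$ finite is ``automatically a $\K$-subspace'' is false when $|\K|$ is not prime (e.g.\ the additive subgroup generated by $1$ in $\mathbb F_4$ is not an $\mathbb F_4$-subspace), but you correctly retract this in your final paragraph and rely only on the valid cofinality argument, so the proof stands.
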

\begin{proof}
Let $\F=\{f_1,\ldots,f_{|\F|}\}$. Since every $U\in\BV$ is compact by Proposition~\ref{prop:compact}, we have that $U\in\B_{gr}(V)$; hence, $\BV\subseteq \B_{gr}(V)$. 

We show that $\BV$ is cofinal in $B_{gr}(V)$. 
To this end, let $U\in \ca B_{gr}(V)$ and $U'=\sum_{i=1}^{|\F|} f_i U$. Since $V$ is a topological vector space, $f_i U$ is compact for all $i=1,\ldots,|\F|$, so $U'$ is compact as well. Clearly, $U'$ is contained in the linear subspace $\langle U\rangle$ of $V$ generated by $U$. We see that actually $U'=\langle U\rangle$.
Indeed, let 
\begin{equation*}%\label{eq:vex3}
x=f_{i_1} u_1+\ldots+f_{i_k} u_k,\quad u_1,\ldots, u_k\in U,\quad f_{i_1},\ldots,f_{i_k}\in\F,
\end{equation*}
be an arbitrary element in $\langle U\rangle$. Rearranging the summands, we obtain $x=\sum_{j=1}^{|\F|} f_j  u^j_{l_1\ldots l_j}\in U'$, where for every $j\in\{1,\ldots,|\F|\}$, we let $u^j_{l_1\ldots l_j}=u_{l_1}+\ldots+u_{l_j}\in U$  for $l_1,\ldots,l_j\in\{1,\ldots,k\}$ such that $f_{i_{l_1}}=\ldots=f_{i_{l_j}}=f_j$. Hence $U'=\langle U\rangle$. 
Therefore, $U'\in\BV$ and $U'$ contains $U$,  $\BV$ is cofinal in $\B_{gr}(V)$ as claimed.

Thus, it follows by \cite[Corollary 2.3]{Virili} that $h_{alg}(\f)=\sup\{H_{alg}(\f,U)\mid U\in\BV\}$. Since for every $U\in\BV$, $$\left|\frac{T_n(\f,U)}{U}\right|=|\F|^{\dim\frac{T_n(\f,U)}{U}}$$  for all $n\in\N_+$, we obtain $$H_{alg}(\phi,U)=\lim_{n\to \infty}\frac{1}{n}\log\left|\frac{T_n(\phi,U)}{U}\right|=\lim_{n\to\infty}\frac{1}{n}\left(\dim\frac{T_n(\phi,U)}{U}\log|\F|\right)=H(\phi,U)\log|\F|,$$
and so the thesis follows.
\end{proof}

\section{General properties and examples}\label{ss:Properties}

In this section we prove the general basic properties of the algebraic entropy. These properties extend their counterparts for discrete vector spaces proved for $\ent_{\dim}$ in \cite{GBSalce}. Moreover, our proofs follow those of the same properties for the intrinsic algebraic entropy given  in~\cite{intrinsic}.

\medskip
We start by proving the invariance of $\ent$ under conjugation by a topological isomorphism.

%In the next proposition we collect some basic properties of entropy. We remark that the analogous result in the discrete case have been proved in \cite{GBSalce}.

\begin{proposition}[Invariance under conjugation]\label{conj}
Let $V$ be an l.l.c.\! vector space and $\phi:V\to V$ a continuous endomorphism.
If $\alpha\colon V\to W$ is topological isomorphism of l.l.c.\! vector spaces, then $\ent(\phi)=\ent(\alpha\phi\alpha^{-1})$.
\end{proposition}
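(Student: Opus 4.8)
The statement to prove is the Invariance under conjugation: if $\alpha\colon V\to W$ is a topological isomorphism of l.l.c.\ vector spaces and $\phi\colon V\to V$ is a continuous endomorphism, then $\ent(\phi)=\ent(\alpha\phi\alpha^{-1})$. The plan is to reduce the equality of suprema to a bijection between the indexing families $\B(V)$ and $\B(W)$ that preserves the local quantities $H$. First I would observe that $\psi:=\alpha\phi\alpha^{-1}\colon W\to W$ is again a continuous endomorphism of an l.l.c.\ vector space, so $\ent(\psi)$ makes sense; and that since $\alpha$ is a topological isomorphism, it restricts to a bijection $\B(V)\to\B(W)$, $U\mapsto\alpha(U)$, because $\alpha$ and $\alpha^{-1}$ are both continuous and open (so they send linearly compact open subspaces to linearly compact open subspaces, using e.g.\ Proposition~\ref{prop:lc properties}(c) for linear compactness and openness of $\alpha$, $\alpha^{-1}$ for openness).

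The key computational step is that for every $U\in\B(V)$ and every $n\in\N_+$ one has $T_n(\psi,\alpha U)=\alpha\, T_n(\phi,U)$. This follows by expanding $T_n(\psi,\alpha U)=\alpha U+\psi\alpha U+\ldots+\psi^{n-1}\alpha U$ and using $\psi^k\alpha=\alpha\phi^k\alpha^{-1}\alpha=\alpha\phi^k$ for each $k$, together with the fact that $\alpha$, being linear and bijective, commutes with finite sums of subspaces: $\alpha(F_1+\ldots+F_n)=\alpha F_1+\ldots+\alpha F_n$. Then, since $\alpha$ is a linear isomorphism carrying $U$ onto $\alpha U$ and $T_n(\phi,U)$ onto $T_n(\psi,\alpha U)$, it induces a linear isomorphism of the finite-dimensional quotients
\begin{equation*}
\frac{T_n(\psi,\alpha U)}{\alpha U}\;\cong\;\frac{T_n(\phi,U)}{U},
\end{equation*}
so these have equal dimension. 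Dividing by $n$ and passing to the limit (the limits exist by Proposition~\ref{entvalue}) gives $H(\psi,\alpha U)=H(\phi,U)$ for every $U\in\B(V)$.

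Finally I would conclude by taking suprema: since $U\mapsto\alpha U$ is a bijection $\B(V)\to\B(W)$ and $H(\psi,\alpha U)=H(\phi,U)$ throughout,
\begin{equation*}
\ent(\psi)=\sup\{H(\psi,U')\mid U'\in\B(W)\}=\sup\{H(\psi,\alpha U)\mid U\in\B(V)\}=\sup\{H(\phi,U)\mid U\in\B(V)\}=\ent(\phi).
\end{equation*}
I do not anticipate a genuine obstacle here: the only points requiring a little care are that $\alpha$ genuinely induces a bijection $\B(V)\to\B(W)$ (which uses that a topological isomorphism preserves both openness and linear compactness of subspaces) and the bookkeeping identity $T_n(\psi,\alpha U)=\alpha\,T_n(\phi,U)$; everything else is formal manipulation of suprema and of isomorphisms of finite-dimensional quotients. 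If one wanted to avoid even checking surjectivity of $U\mapsto\alpha U$, one could instead argue with two inequalities $\ent(\psi)\le\ent(\phi)$ and (applying the same to $\alpha^{-1}$) $\ent(\phi)\le\ent(\psi)$, but the bijection route is cleaner.
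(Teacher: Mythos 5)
Your proposal is correct and follows essentially the same route as the paper: both use that $\alpha$ induces a bijection between $\B(V)$ and $\B(W)$, the identity $\alpha\,T_n(\phi,\alpha^{-1}U)=T_n(\alpha\phi\alpha^{-1},U)$, and the induced isomorphism of the finite-dimensional quotients to get $H(\phi,\alpha^{-1}U)=H(\alpha\phi\alpha^{-1},U)$ before taking suprema. The only difference is the cosmetic one of indexing by $U\in\B(V)$ rather than $U\in\B(W)$.
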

\begin{proof}
Let $U\in\mathcal{B}(W)$; then $\alpha^{-1}U\in\BV$. For $n\in\N_+$ we have $\alpha T_n(\phi,\alpha^{-1}U)=T_n(\alpha\phi\alpha^{-1},U)$. As $\alpha$ induces an isomorphism $\frac{V}{\alpha^{-1}U}\to \frac{W}{U}$, and furthermore through this isomorphism $\frac{T_n(\phi,\alpha^{-1}U)}{\alpha^{-1}U}$ is isomorphic to $\frac{T_n(\alpha\phi\alpha^{-1},U)}{U}$, by applying the definition we have
$H(\phi,\alpha^{-1}U)=H(\alpha\phi\alpha^{-1},U)$. Now the thesis follows, since $\alpha$ induces a bijection between $\BV$ and $\ca{B}(W)$.
\end{proof}

The next lemma is useful to prove the monotonicity of the algebraic entropy in Proposition \ref{monotonicity}.

\begin{lemma}\label{lem:basis}
Let $V$ be an l.l.c.\! vector space, $\phi:V\to V$ a continuous endomorphism and $W$ a closed $\phi$-invariant linear subspace of $V$. Then:
\begin{eqnarray*}
&&\ent(\f\restriction_W)=\sup\{H(\f\restriction_W,U\cap W)\mid U\in\BV\}\ \text{and}\\
&&\ent(\overline\f)=\sup\{H(\overline\f,(U+W)/W)\mid U\in\BV\},
\end{eqnarray*}
where $\overline\f:V/W\to V/W$ is the continuous endomorphism induced by $\phi$.
\end{lemma}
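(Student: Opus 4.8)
The plan is to prove each of the two displayed equalities by showing that the relevant family of open subspaces is cofinal in the appropriate local basis, and then invoke Corollary~\ref{cor:base}(a). For the first equality, recall that by Proposition~\ref{rem:complete}(b) the subspace $W$ is closed in $V$, and since $\phi W\leq W$, the restriction $\f\restriction_W$ is a continuous endomorphism of the l.l.c.\! vector space $W$ (using Proposition~\ref{rem:complete}(c)). By Proposition~\ref{prop:basis}(a), $\B(W)=\{U\cap W\mid U\in\BV\}$, so the family $\{U\cap W\mid U\in\BV\}$ is literally equal to $\B(W)$, hence trivially cofinal in it; Corollary~\ref{cor:base}(a) then gives $\ent(\f\restriction_W)=\sup\{H(\f\restriction_W,U\cap W)\mid U\in\BV\}$.

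For the second equality, one first checks that $W$ being closed and $\f$-invariant means $\overline\f\colon V/W\to V/W$ is a well-defined continuous endomorphism of the l.l.c.\! vector space $V/W$ (again Proposition~\ref{rem:complete}(c)). By Proposition~\ref{prop:basis}(b), $\B(V/W)=\{(U+W)/W\mid U\in\BV\}$, so once more the indexing family coincides with the full local basis $\B(V/W)$, and Corollary~\ref{cor:base}(a) yields $\ent(\overline\f)=\sup\{H(\overline\f,(U+W)/W)\mid U\in\BV\}$. The only thing to verify carefully here is that $\overline\f$ restricted to the subspace $(U+W)/W$ behaves as expected, i.e.\ that $H(\overline\f,(U+W)/W)$ is the quantity appearing in \eqref{hdef} with respect to the l.l.c.\! structure on $V/W$; this is immediate from the definitions since $(U+W)/W\in\B(V/W)$ by Proposition~\ref{prop:basis}(b) and $T_n(\overline\f,(U+W)/W)=(T_n(\f,U)+W)/W$.

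I do not expect any serious obstacle: the proof is essentially the observation that Proposition~\ref{prop:basis} identifies $\B(W)$ and $\B(V/W)$ exactly with the two families in the statement, after which Corollary~\ref{cor:base}(a) applies verbatim. The only mild subtlety worth spelling out is that distinct $U\in\BV$ may give the same $U\cap W$ (resp.\ the same $(U+W)/W$), so the supremum in the statement ranges over a possibly redundant index set — but this is harmless for a supremum. Thus the proof reads:

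\begin{proof}
Since $W$ is a closed $\phi$-invariant linear subspace of the l.l.c.\! vector space $V$, by Proposition~\ref{rem:complete}(c) both $W$ and $V/W$ are l.l.c.\! vector spaces, and $\f\restriction_W$ and $\overline\f$ are continuous endomorphisms of $W$ and $V/W$ respectively. By Proposition~\ref{prop:basis}(a) we have $\B(W)=\{U\cap W\mid U\in\BV\}$; in particular this family is cofinal in $\B(W)$, so Corollary~\ref{cor:base}(a) gives
$$\ent(\f\restriction_W)=\sup\{H(\f\restriction_W,U\cap W)\mid U\in\BV\}.$$
Similarly, by Proposition~\ref{prop:basis}(b) we have $\B(V/W)=\{(U+W)/W\mid U\in\BV\}$, which is therefore cofinal in $\B(V/W)$, and Corollary~\ref{cor:base}(a) yields
$$\ent(\overline\f)=\sup\{H(\overline\f,(U+W)/W)\mid U\in\BV\}.$$
This concludes the proof.
\end{proof}
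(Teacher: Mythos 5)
Your proof is correct and is essentially the paper's own argument: the paper's proof is literally ``Apply Proposition~\ref{prop:basis}'', i.e.\ the two families coincide with $\B(W)$ and $\B(V/W)$, after which the equalities are just the definition of $\ent$ (your appeal to Corollary~\ref{cor:base}(a) via cofinality is harmless but not even needed, since the index families are equal to the full bases).
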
 
\begin{proof} 
Apply Proposition~\ref{prop:basis}.
\end{proof}

Next we see that the algebraic entropy is monotone under taking invariant linear subspaces and quotient vector spaces.

\begin{proposition}[Monotonicity]\label{monotonicity}
Let $V$ be an l.l.c.\!  vector space, $\phi: V \to V$ a continuous endomorphism, $W$ a $\f$-invariant closed linear subspace of $V$ and $\overline\f:V/W\to V/W$ is the continuous endomorphism induced by $\phi$. Then:
\begin{itemize}
\item[(a)] $\ent(\phi)\geq\ent(\phi\restriction_W)$;
\item[(b)] $\ent(\phi)\geq\ent(\overline\f)$.
\end{itemize}
\end{proposition}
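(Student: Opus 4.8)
The plan is to prove both inequalities by comparing, for each fixed $U\in\BV$, the partial trajectories of $U$ under $\phi$ with those of the induced basic subspace of $W$, respectively of $V/W$, and then to pass to the supremum by means of Lemma~\ref{lem:basis}. This mirrors the monotonicity arguments for the intrinsic algebraic entropy given in~\cite{intrinsic}.

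For part (a), fix $U\in\BV$. Since $W$ is $\phi$-invariant one has $\phi^k(U\cap W)\subseteq\phi^kU\cap W$ for every $k\geq 0$, hence $T_n(\phi\restriction_W,U\cap W)\subseteq T_n(\phi,U)\cap W$ for all $n\in\N_+$. Now $U\subseteq T_n(\phi,U)$ gives $U\cap W=U\cap(T_n(\phi,U)\cap W)$, so the isomorphism theorems yield the chain
$$\frac{T_n(\phi\restriction_W,U\cap W)}{U\cap W}\ \leq\ \frac{T_n(\phi,U)\cap W}{U\cap W}\ \cong\ \frac{U+(T_n(\phi,U)\cap W)}{U}\ \leq\ \frac{T_n(\phi,U)}{U}.$$
All these spaces are finite-dimensional by Proposition~\ref{prop:lc properties}(d,e), so taking dimensions, dividing by $n$ and letting $n\to\infty$ gives $H(\phi\restriction_W,U\cap W)\leq H(\phi,U)\leq\ent(\phi)$; taking the supremum over $U\in\BV$ and applying Lemma~\ref{lem:basis} yields $\ent(\phi\restriction_W)\leq\ent(\phi)$.

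For part (b), let $\pi\colon V\to V/W$ be the canonical projection, so that $\overline\phi\circ\pi=\pi\circ\phi$ and hence $\pi\circ\phi^k=\overline\phi^k\circ\pi$ for all $k$. Since $\pi$ is linear, $\pi(T_n(\phi,U))=T_n(\overline\phi,(U+W)/W)$ for every $U\in\BV$ and $n\in\N_+$. The restriction of $\pi$ to $T_n(\phi,U)$ is surjective onto $T_n(\overline\phi,(U+W)/W)$ and carries $U$ onto $(U+W)/W$, so it induces a surjection
$$\frac{T_n(\phi,U)}{U}\ \twoheadrightarrow\ \frac{T_n(\overline\phi,(U+W)/W)}{(U+W)/W},$$
whence $\dim\frac{T_n(\overline\phi,(U+W)/W)}{(U+W)/W}\leq\dim\frac{T_n(\phi,U)}{U}$. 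Dividing by $n$, letting $n\to\infty$, taking the supremum over $U\in\BV$ and applying Lemma~\ref{lem:basis} gives $\ent(\overline\phi)\leq\ent(\phi)$.

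I do not expect a genuine obstacle: the whole content is the two elementary observations that partial trajectories of $U$ carry the invariant subspace $U\cap W$ into $T_n(\phi,U)\cap W$ and that $\pi$ commutes with $\phi$; the rest is bookkeeping with the isomorphism theorems together with the already established finiteness of the codimensions involved. The only point to keep straight is that the reduction to a single $U\in\BV$ must pass through Lemma~\ref{lem:basis} (equivalently Proposition~\ref{prop:basis}), since $\B(W)$ and $\B(V/W)$ are precisely the families $\{U\cap W\mid U\in\BV\}$ and $\{(U+W)/W\mid U\in\BV\}$.
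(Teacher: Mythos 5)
Your proof is correct and follows essentially the same route as the paper's: for (a) you compare $T_n(\phi\restriction_W,U\cap W)$ with $T_n(\phi,U)$ via the second isomorphism theorem (using $T_n(\phi,U)\cap W\cap U=U\cap W$), and for (b) you use that $\pi$ intertwines $\phi$ with $\overline\phi$ to get a surjection of the relevant quotients; the paper phrases (b) through the isomorphisms $\frac{T_n(\overline\phi,(U+W)/W)}{(U+W)/W}\cong\frac{T_n(\phi,U)+W}{U+W}$, but this is the same computation. The reduction to $\mathcal B(W)$ and $\mathcal B(V/W)$ via Lemma~\ref{lem:basis} is exactly the paper's final step as well.
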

\begin{proof}
(a) Let $U\in\BV$ and $n\in\N_+$. Since
$$\frac{T_n(\phi,U)}{U}\geq \frac{U+T_n(\phi\restriction_W,U\cap W)}{U}\cong \frac{T_n(\phi\restriction_W,U\cap W)}{T_n(\phi\restriction_W,U\cap W)\cap U},$$
and $T_n(\phi\restriction_W,U\cap W)\cap U=U\cap W$, it follows that 
\begin{equation*}
\dim\frac{T_n(\phi\restriction_W,U\cap W)}{U\cap W}\leq \dim \frac{T_n(\f,U)}{U}.
\end{equation*}
Hence, $H(\f\restriction_W,U\cap W)\leq H(\f,U)\leq\ent(\f)$. Finally, Lemma~\ref{lem:basis} yields the thesis.  

(b) For $U\in\BV$ and $n\in\N_+$, % denoting by $\pi:V\to V/W$ the canonical projection, 
we have that %\NB\footnote{Is it true that $\frac{T_n(\overline\phi,\pi U)}{\pi U}\cong \frac{T_n(\phi,U)}{U}$? What does this imply?} %so $H(\overline\phi,\pi U)=H(\phi,U)$. 
\begin{equation}\label{eq:iso1}
\bigslant{T_n\big(\overline\f,\frac{U+W}{W}\big)}{\frac{U+W}{W}}\cong \frac{T_n(\f, U+W)}{U+W}=\frac{T_n(\f,U)+W}{U+W}\cong \frac{T_n(\f,U)}{T_n(\f,U)\cap (U+W)},
\end{equation}
where the latter vector space is clearly a quotient of $\frac{T_n(\phi,U)}{U}$. Therefore,
\begin{equation*}\label{eq:monotonia}
H\left(\overline\f,\frac{U+W}{W}\right)\leq H(\f,U)\leq\ent(\phi).
\end{equation*}
Now Lemma~\ref{lem:basis} concludes the proof.
\end{proof}

Note that equality holds in item (b) of the above proposition if $W$ is also linearly compact. In fact, in this case for every $U\in\BV$ we have $U+W\in\BV$  by Proposition~\ref{prop:lc properties}(c), and hence Lemma~\ref{lem:mono} and the first isomorphism in \eqref{eq:iso1} yield $H(\f,U)\leq H(\f,U+W)=H\left(\overline{\f},\frac{U+W}{W}\right)$; therefore, $\ent(\f)\leq\ent(\overline{\f})$ and so $\ent(\phi)=\ent(\overline\phi)$ by Lemma \ref{monotonicity}(b).

\begin{proposition}[Logarithmic Law]
Let $V$ be an l.l.c.\! vector space and $\phi:V\to V$ a continuous endomorphism. Then $\ent(\phi^k)=k \cdot \ent(\phi)$ for every $k\in\N$.
\end{proposition}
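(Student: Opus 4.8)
The plan is to prove the two inequalities $\ent(\phi^k) \le k\cdot\ent(\phi)$ and $\ent(\phi^k) \ge k\cdot\ent(\phi)$ separately, working at the level of a fixed $U \in \BV$ and relating the partial trajectories $T_n(\phi^k, U)$ to the partial trajectories $T_{kn}(\phi, U)$. The case $k=0$ is $\ent(\mathrm{id}_V) = 0$, which is Example~\ref{ex:id}(b), so assume $k \ge 1$. The key observation is the sandwich
\[
T_n(\phi^k, U) \;\le\; T_{kn}(\phi, U) \;=\; \sum_{i=0}^{k-1} \phi^i\, T_n(\phi^k, U),
\]
where the first inclusion holds because $\phi^{jk} U \le T_{kn}(\phi,U)$ for $0 \le j < n$, and the equality holds because every $\phi^m U$ with $0 \le m < kn$ can be written as $\phi^i \phi^{jk} U$ with $0 \le i < k$ and $0 \le j < n$.

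For the first inequality, fix $U \in \BV$. From $T_n(\phi^k,U) \le T_{kn}(\phi,U)$ we get $\dim \frac{T_n(\phi^k,U)}{U} \le \dim\frac{T_{kn}(\phi,U)}{U}$, hence dividing by $n$ and letting $n \to \infty$,
\[
H(\phi^k, U) \;=\; \lim_{n\to\infty}\frac1n \dim\frac{T_n(\phi^k,U)}{U} \;\le\; \lim_{n\to\infty}\frac{k}{kn}\dim\frac{T_{kn}(\phi,U)}{U} \;=\; k\, H(\phi, U) \;\le\; k\,\ent(\phi).
\]
Taking the supremum over $U \in \BV$ gives $\ent(\phi^k) \le k\,\ent(\phi)$.

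For the reverse inequality, fix $U \in \BV$ and set $U' := T_k(\phi, U) = U + \phi U + \dots + \phi^{k-1}U$, which lies in $\BV$ since partial trajectories of elements of $\BV$ are again in $\BV$. Using the identity $T_{kn}(\phi,U) = \sum_{i=0}^{k-1}\phi^i T_n(\phi^k,U)$ one checks by an easy induction (or directly) that $T_n(\phi^k, U') = T_{kn}(\phi, U)$ for all $n \in \N_+$; indeed $T_n(\phi^k,U') = \sum_{j=0}^{n-1}\phi^{kj}U' = \sum_{j=0}^{n-1}\sum_{i=0}^{k-1}\phi^{kj+i}U = T_{kn}(\phi,U)$. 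Therefore
\[
H(\phi^k, U') \;=\; \lim_{n\to\infty}\frac1n \dim\frac{T_n(\phi^k,U')}{U'} \;=\; \lim_{n\to\infty}\frac{k}{kn}\dim\frac{T_{kn}(\phi,U)}{U'}.
\]
Since $U \le U'$ with $\dim(U'/U)$ finite and constant in $n$, we have $\dim\frac{T_{kn}(\phi,U)}{U'} = \dim\frac{T_{kn}(\phi,U)}{U} - \dim\frac{U'}{U}$, so the last limit equals $k\,H(\phi,U)$. Hence $k\,H(\phi,U) = H(\phi^k,U') \le \ent(\phi^k)$ for every $U \in \BV$, and taking the supremum over $U$ yields $k\,\ent(\phi) \le \ent(\phi^k)$, completing the proof.

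The only mildly delicate point is bookkeeping in the identity $T_n(\phi^k, U') = T_{kn}(\phi,U)$ and making sure that passing to $U'$ (rather than to $U$ itself) is legitimate — it is, precisely because $H(\phi^k,-)$ only changes by a bounded amount when one enlarges $U$ by a finite-dimensional piece (cf. Lemma~\ref{lem:mono} and the computation above). Everything else is the elementary limit manipulation above; there is no real obstacle, the statement being essentially formal once the trajectory identity is in place.
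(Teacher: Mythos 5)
Your proof is correct and follows essentially the same route as the paper: both arguments hinge on the auxiliary subspace $U'=T_k(\phi,U)\in\BV$ and the trajectory identity $T_n(\phi^k,T_k(\phi,U))=T_{kn}(\phi,U)$, combined with the fact that enlarging $U$ to $U'$ changes the relevant dimensions only by the constant $\dim(U'/U)$. The only cosmetic difference is that you obtain the inequality $\ent(\phi^k)\le k\cdot\ent(\phi)$ directly from the inclusion $T_n(\phi^k,U)\le T_{kn}(\phi,U)$ at the level of $U$ itself, whereas the paper routes both inequalities through $U'$ and the monotonicity lemma; the substance is the same.
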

\begin{proof}
For $k=0$, it is enough to note that $\ent(\mathrm{id}_V)=0$ by Example~\ref{ex:id}. So let $k\in\N_+$ and $U\in\BV$. For every $n\in\N_+$, $$T_{nk}(\f,U)=T_n(\f^k,T_k(\f,U))\quad\text{and}\quad T_{n}(\f,T_k(\f,U))=T_{n+k-1}(\f,U).$$
Let $E=T_k(\f,U)\in\BV$. By Lemma~\ref{lem:mono},
\begin{eqnarray*}
k\cdot H(\f,U)&\leq& k\cdot H(\f,E)=k\cdot \lim_{n\to\infty}\frac{1}{nk}\dim \frac{T_{nk}(\f,E)}{E}=\lim_{n\to \infty}\frac{1}{n}\dim \frac{T_{(n+1)k-1}(\f,U)}{E}\\
                                              &\leq& \lim_{n\to \infty}\frac{1}{n}\dim \frac{T_{(n+1)k}(\f,U)}{E} = \lim_{n\to \infty}\frac{1}{n}\dim \frac{T_{n+1}(\f^k,E)}{E} =H(\f^k,E);
\end{eqnarray*}
consequently, $k\cdot \ent(\f)\leq \ent(\f^k)$.

Conversely, as $U\leq E\leq T_{nk}(\f,U)$,
\begin{eqnarray*}
\ent(\f)\geq H(\f,U)&=&\lim_{n\to\infty}\frac{1}{nk}\dim\frac{T_{nk}(\f,U)}{U}=\lim_{n\to\infty}\frac{1}{nk}\dim \frac{T_n(\f^k,E)}{U}\\
&\geq&\lim_{n\to\infty}\frac{1}{nk}\dim\frac{T_n(\f^k,E))}{E}=\frac{1}{k}\cdot H(\f^k,E).
\end{eqnarray*} 
By Lemma~\ref{lem:mono}, it follows that $H(\f^k,E)\geq H(\f^k,U)$, and so $k\cdot \ent(\f)\geq\ent(\f^k)$.
\end{proof}

The next property shows that the algebraic entropy behaves well with respect to direct limits.

\begin{proposition}[Continuity on direct limits]
Let $V$ be an l.l.c.\! vector space and $\phi\colon V\to V$ a continuous endomorphism. Assume that $V$ is the direct limit of a family $\{V_i\mid i\in I\}$ of closed $\phi$-invariant linear subspaces of $V$, and let $\phi_i=\phi\restriction_{V_i}$ for all $i\in I$. Then $\ent(\phi)=\sup_{i\in I}\ent(\phi_i).$
\end{proposition}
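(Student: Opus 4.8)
The plan is to reduce the statement to a single computation with one subspace $U \in \B(V)$ and then exploit the interplay between the direct-limit structure and the fact that trajectories of $U$ only ever use finitely many vectors. First I would establish the easy inequality $\ent(\phi) \geq \sup_{i\in I}\ent(\phi_i)$: since each $V_i$ is a closed $\phi$-invariant subspace, Monotonicity (Proposition~\ref{monotonicity}(a)) gives $\ent(\phi) \geq \ent(\phi\restriction_{V_i}) = \ent(\phi_i)$ for every $i$, and taking the supremum over $i\in I$ yields the claim. The substance of the proof is the reverse inequality $\ent(\phi) \leq \sup_{i\in I}\ent(\phi_i)$, for which it suffices to show $H(\phi,U) \leq \sup_{i\in I}\ent(\phi_i)$ for each fixed $U \in \B(V)$.

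Fix $U \in \B(V)$. By Lemma~\ref{lem:finite part} there is a finite-dimensional subspace $F \leq U$ with $T_n(\phi,U) = U + T_n(\phi,F)$ for all $n\in\N_+$. Here is the point where the direct-limit hypothesis enters: since $V = \varinjlim_{i\in I} V_i$ and $F$ is finite-dimensional, a finite spanning set of $F$ together with the (also finitely many, since $U$ has finite codimension in $T_2(\phi,U) = U + \phi F$) generators of $U/(U\cap\phi^{-1}U)$ — more simply, noting that $U$ itself need not lie in any $V_i$ — I have to be a little careful. The clean route is: $F$ is finite-dimensional, so $F \leq V_{i_0}$ for some $i_0 \in I$ by the direct-limit property; set $U_0 := U \cap V_{i_0}$, which lies in $\B(V_{i_0})$ by Proposition~\ref{prop:basis}(a). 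Then for every $n$, since $\phi(V_{i_0}) \subseteq V_{i_0}$ and $F \subseteq V_{i_0}$, the trajectory $T_n(\phi,F) = T_n(\phi_{i_0},F)$ is contained in $V_{i_0}$, hence
\begin{equation*}
\dim\frac{T_n(\phi,U)}{U} = \dim\frac{U + T_n(\phi,F)}{U} = \dim\frac{T_n(\phi,F)}{T_n(\phi,F)\cap U} \leq \dim\frac{T_n(\phi_{i_0},F)}{T_n(\phi_{i_0},F)\cap U_0} + c,
\end{equation*}
where $c = \dim\big((T_n(\phi,F)\cap U)/(T_n(\phi,F)\cap U_0)\big)$ — but actually $T_n(\phi,F)\cap U_0 = T_n(\phi,F)\cap U \cap V_{i_0} = T_n(\phi,F)\cap U$ since $T_n(\phi,F) \subseteq V_{i_0}$, so $c = 0$. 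Thus $\dim(T_n(\phi,U)/U) \leq \dim(U_0 + T_n(\phi_{i_0},F))/U_0 \leq \dim(T_n(\phi_{i_0},U_0)/U_0)$, and dividing by $n$ and letting $n\to\infty$ gives $H(\phi,U) \leq H(\phi_{i_0},U_0) \leq \ent(\phi_{i_0}) \leq \sup_{i\in I}\ent(\phi_i)$.

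The main obstacle I anticipate is the bookkeeping around whether $U$ and its trajectory genuinely live inside a single $V_{i_0}$: $U$ is linearly compact and in general infinite-dimensional, so it need not be contained in any $V_i$, and one must instead work with $U_0 = U\cap V_{i_0}$ and verify that replacing $U$ by $U_0$ does not lose entropy — this is exactly what the reduction via Lemma~\ref{lem:finite part} accomplishes, since only the finite-dimensional piece $F$ matters for the growth rate, and $F$ does fit into some $V_{i_0}$. A secondary point to check is that $U_0 \in \B(V_{i_0})$, which follows from Proposition~\ref{prop:basis}(a) applied to the closed subspace $V_{i_0}$ of $V$ (recall $V_{i_0}$ is l.l.c.\ by Proposition~\ref{rem:complete}(c), being closed in $V$), together with the observation that $U_0$ is open in $V_{i_0}$ and linearly compact as a closed subspace of the linearly compact $U$. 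Once these identifications are in place, the inequality $H(\phi,U) \leq \sup_i \ent(\phi_i)$ is immediate and the proof concludes by taking the supremum over $U\in\B(V)$.
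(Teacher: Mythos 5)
Your proposal is correct and follows essentially the same route as the paper's proof: reduce via Lemma~\ref{lem:finite part} to a finite-dimensional $F$ with $T_n(\phi,U)=U+T_n(\phi,F)$, place $F$ inside some $V_{i_0}$, and compare $T_n(\phi,U)/U$ with $T_n(\phi_{i_0},U\cap V_{i_0})/(U\cap V_{i_0})$ through the common quotient $T_n(\phi,F)/(U\cap T_n(\phi,F))$. The paper in fact obtains the equality $H(\phi,U)=H(\phi_{i_0},U\cap V_{i_0})$ where you settle for the inequality, but that makes no difference to the conclusion.
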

\begin{proof}
By Proposition \ref{monotonicity}(a), $\ent(\phi)\geq \ent(\phi_i)$ for every $i\in I$ and so $\ent(\phi)\geq \sup_{i\in I}\ent(\phi_i)$. 

Conversely, let $U\in\BV$. By Lemma~\ref{lem:finite part}, there exists a finite dimensional subspace $F$ of $U$ such that for all $n\in\N_+$
\begin{equation}\label{eqa}
T_n(\phi,U)=U+T_n(\phi,F).
\end{equation}
As $F$ is finite dimensional, $F\leq V_i$ for some $i\in I$. In particular,
\begin{equation}\label{eqb}
T_n(\phi_i,U\cap V_i)=(U\cap V_i)+T_n(\phi,F).
\end{equation} 
Indeed, since $F\leq U\cap V_i$, the inclusion $(U\cap V_i)+T_n(\phi,F)\leq T_n(\phi_i,U\cap V_i)$ follows easily. On the other hand, since $T_n(\phi,F)\leq V_i$,  $$T_n(\phi_i, U\cap V_i)\leq T_n(\phi,U)\cap V_i=(U+T_n(\phi,F))\cap V_i=(U\cap V_i)+T_n(\phi,F).$$
Therefore, \eqref{eqb} yields
$$\frac{T_n(\phi_i,U\cap V_i)}{U\cap V_i}\cong \frac{(U\cap V_i)+T_n(\phi,F)}{U\cap V_i}\cong \frac{T_n(\phi,F)}{U\cap T_n(\phi,F)}.$$
At the same time, \eqref{eqa} implies
$$\frac{T_n(\phi,U)}{U}\cong \frac{U+T_n(\phi,F)}{U}\cong \frac{T_n(\phi,F)}{U\cap T_n(\phi,F)}.$$
Hence, $H(\phi,U)=H(\phi_i, U\cap V_i)\leq \sup_{i\in I}\ent(\phi_i)$, and so $\ent(\phi)\leq \sup_{i\in I}\ent(\phi_i)$.
\end{proof}

%As every module is the direct limit of its finitely generated submodules, Proposition \ref{sup-cont} says in particular that the algebraic entropy is an upper continuous invariant of  $\mod (K[X])$, that is, $\ent(V_\phi)= \sup_W \ent(W)$, ranging $W$ in the set of the finitely generated $K[X]$-submodules of $V_\phi$, that is, of the $\phi$-trajectories of finite dimensional subspaces of $V$.

We end this list of properties of the algebraic entropy with the following simple case of the Addition Theorem. %First we need the next lemma.

\begin{proposition}[weak Addition Theorem]
For $i=1,2$, let $V_i$ be an l.l.c.\! vector space and $\phi_i:V_1\to V_1$ a continuous endomorphism. Let $V = V_1 \times V_2$ and $\phi= \phi_1 \times \phi_2: V \to V$. Then $\ent(\phi) = \ent(\phi_1) + \ent(\phi_2)$.
\end{proposition}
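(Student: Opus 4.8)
The plan is to prove the two inequalities $\ent(\phi) \le \ent(\phi_1) + \ent(\phi_2)$ and $\ent(\phi) \ge \ent(\phi_1) + \ent(\phi_2)$ separately, working throughout with suitable cofinal families of subspaces so that Corollary~\ref{cor:base} applies. The key structural observation is that $\BV$ for $V = V_1 \times V_2$ contains a cofinal family of ``box'' subspaces of the form $U_1 \times U_2$ with $U_i \in \B(V_i)$: indeed, given any $U \in \BV$ there exist $U_i \in \B(V_i)$ with $U_1 \times U_2 \subseteq U$, and conversely every $U_1 \times U_2$ with $U_i \in \B(V_i)$ lies in $\BV$ (it is linearly compact by Proposition~\ref{prop:lc properties}(f) and open). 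So by Corollary~\ref{cor:base}(a) it suffices to compute $H(\phi, U_1 \times U_2)$ for such box subspaces.

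For the lower bound, I would fix $U_i \in \B(V_i)$ and set $U = U_1 \times U_2$. Since $\phi = \phi_1 \times \phi_2$ acts coordinatewise, one has $\phi^k U = \phi_1^k U_1 \times \phi_2^k U_2$ and hence $T_n(\phi, U) = T_n(\phi_1, U_1) \times T_n(\phi_2, U_2)$ for every $n$. Therefore
\begin{equation*}
\frac{T_n(\phi, U)}{U} \cong \frac{T_n(\phi_1, U_1)}{U_1} \times \frac{T_n(\phi_2, U_2)}{U_2},
\end{equation*}
so that $\dim \frac{T_n(\phi,U)}{U} = \dim \frac{T_n(\phi_1,U_1)}{U_1} + \dim \frac{T_n(\phi_2,U_2)}{U_2}$; dividing by $n$ and letting $n \to \infty$ gives $H(\phi, U) = H(\phi_1, U_1) + H(\phi_2, U_2)$. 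Taking the supremum over $U_1, U_2$ independently (using Corollary~\ref{cor:base}) yields $\ent(\phi) \ge \ent(\phi_1) + \ent(\phi_2)$; in fact this identity, together with the cofinality of the box subspaces, simultaneously gives the upper bound, since $H(\phi, U_1 \times U_2) = H(\phi_1, U_1) + H(\phi_2, U_2) \le \ent(\phi_1) + \ent(\phi_2)$ for all box subspaces, and these are cofinal in $\BV$.

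The main point requiring care — and essentially the only obstacle — is the cofinality claim for box subspaces: given an arbitrary $U \in \B(V_1 \times V_2)$, I must produce $U_i \in \B(V_i)$ with $U_1 \times U_2 \subseteq U$. Here one uses that the canonical inclusions $V_i \hookrightarrow V$ and projections $V \to V_i$ are continuous, so $U \cap V_i$ (identifying $V_i$ with $V_i \times \{0\}$) is an open linear subspace of $V_i$, hence contains some $U_i \in \B(V_i)$ by local linear compactness; one then checks $U_1 \times U_2 \subseteq U$ using that $U$ is a subgroup containing both $U_1 \times \{0\}$ and $\{0\} \times U_2$. The rest is the routine verification that $T_n$ distributes over the product and that dimension is additive on products of finite-dimensional spaces, plus an application of Corollary~\ref{cor:base}(a) to both $\BV$ and the box family. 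I would also note at the outset that the finiteness/infinity cases are handled uniformly by working in $\N \cup \{\infty\}$, since $H(\phi, U_1 \times U_2) = H(\phi_1,U_1) + H(\phi_2,U_2)$ holds as an identity in $\N$ for each fixed choice and the suprema then add correctly.
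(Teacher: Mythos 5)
Your overall architecture (reduce to ``box'' subspaces, use $T_n(\phi,U_1\times U_2)=T_n(\phi_1,U_1)\times T_n(\phi_2,U_2)$ and additivity of dimension, then apply Corollary~\ref{cor:base}(a)) is exactly the paper's, and your treatment of the lower bound by adding independent suprema is fine (the paper instead splits off the case $\ent(\phi)=\infty$ and invokes Lemma~\ref{ent=H}; both work). But there is a genuine error in the step you yourself single out as ``the only obstacle'': you prove the wrong direction of cofinality. In this paper a subset $\B$ of $\BV$ is cofinal when every $U\in\BV$ is \emph{contained in} some member of $\B$; this is forced by Lemma~\ref{lem:mono}, which says $H(\phi,-)$ is \emph{increasing} in $U$, so Corollary~\ref{cor:base}(a) needs $U\leq U_1\times U_2$ to conclude $H(\phi,U)\leq H(\phi,U_1\times U_2)$. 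What you establish --- that every $U\in\BV$ \emph{contains} a box $U_1\times U_2\subseteq U$, via $U\cap V_i$ --- is coinitiality, and it only yields $H(\phi,U_1\times U_2)\leq H(\phi,U)$, which is useless for the upper bound $\ent(\phi)\leq\ent(\phi_1)+\ent(\phi_2)$. One cannot repair this by writing $U=B+F$ with $B$ a box inside $U$ and $F$ finite-dimensional: that only gives $H(\phi,U)\leq H(\phi,B)+\dim F$, with an error term that does not disappear.

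The fix is short and is what the paper does: for $U\in\BV$ set $U_i=\pi_i U$, where $\pi_i\colon V\to V_i$ is the canonical projection. Since $\pi_i$ is continuous and open, $U_i$ is an open, linearly compact linear subspace of $V_i$ (Proposition~\ref{prop:lc properties}(c)), hence $U_i\in\B(V_i)$, and clearly $U\leq U_1\times U_2$. This gives cofinality of the box family in the required (upward) sense, after which your computation of $H(\phi,U_1\times U_2)$ and the rest of your argument go through.
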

\begin{proof}
Notice that $\mathcal B=\{U_1\times U_2\mid U_i\in\mathcal{B}(V_i),i=1,2\}$ is cofinal in $\BV$. Indeed, let $U\in\BV$; for $i=1,2$, since the canonical projection $\pi_i\colon V\to V_i$ is an open continuous map, $U_i=\pi_i U\in\mathcal B(V_i)$, and $U\leq U_1\times U_2$. 

Now, for $U_1\times U_2\in\mathcal B$ and for every $n\in\N_+$, 
$$\frac{T_n(\phi,U_1\times U_2)}{U_1\times U_2}\cong \frac{T_n(\phi_1,U_1)}{U_1}\times \frac{T_n(\phi_2,U_2)}{U_2};$$
hence, 
\begin{equation}\label{Heq}
H(\phi,U_1\times U_2)=H(\phi_1,U_1)+H(\phi_2,U_2).
\end{equation}
By Corollary \ref{cor:base}(a) we conclude that $\ent(\phi)\leq \ent(\phi_1)+\ent(\phi_2)$.

If $\ent(\phi)=\infty$, the thesis holds true. So assume that $\ent(\phi)$ is finite; then $\ent(\phi_1)$ and $\ent(\phi_2)$ are finite as well by Proposition \ref{monotonicity}(a).
Hence, for $i=1,2$ by Lemma~\ref{ent=H} there exists $U_i\in\mathcal B(V_i)$ such that $\ent(\phi_i)=H(\phi_i,U_i)$. By \eqref{Heq} we obtain
$$\ent(\phi_1)+\ent(\phi_2)=H(\phi_1,U_1)+H(\phi_2,U_2)=H(\phi,U_1\times U_2)\leq \ent(\phi),$$
where the latter inequality holds because $U_1\times U_2\in\BV$. Therefore, $\ent(\phi_1)+\ent(\phi_2)\leq \ent(\phi)$ and this concludes the proof.
\end{proof}

In the case of a discrete vector space $V$ and an automorphism $\phi:V\to V$, we have that $\ent_{\dim}(\phi^{-1})=\ent_{\dim}(\phi)$ (see \cite{GBSalce}). This property does not extend to the general case of an l.l.c.\! vector space $V$ and a topological automorphism $\phi:V\to V$; in fact, the next example shows that $\ent(\phi)$ could not coincide with $\ent(\phi^{-1})$.
%We conclude the section by giving the main examples in the context of entropy, namely the Bernoulli shifts.

\medskip
Let $F$ be a finite dimensional vector space and let $V=V_c\oplus V_d$, with
$$V_c=\prod_{n=-\infty}^0 F\quad\mbox{and}\quad V_d=\bigoplus_{n=1}^\infty F,$$
be endowed with the topology inherited from the product topology of $\prod_{n\in\Z}F$, so $V_c$ is linearly compact and $V_d$ is discrete.

The \emph{left (two-sided) Bernoulli shift} is
$${}_F\beta\colon V\to V, \quad (x_n)_{n\in\Z}\mapsto (x_{n+1})_{n\in\Z},$$
while the \emph{right (two-sided) Bernoulli shift} is
$$\beta_F\colon V\to V, \quad (x_n)_{n\in\Z}\mapsto(x_{n-1})_{n\in\Z}.$$
Clearly, $\beta_F$ and ${}_F\beta$ are topological automorphisms such that ${}_F\beta^{-1}=\beta_F.$

\smallskip
Let us compute their algebraic entropies.

\begin{example}\label{ex:bs}\label{bernoulli}
\begin{enumerate}[(a)] Consider the case when $F=\mathbb K$, that is, $V_c=\prod_{n=-\infty}^0\K$ and $V_d=\bigoplus_{n=1}^\infty \K.$
%Let $V=V_c\times V_d$ be the l.l.c.\! vector space such that
%$$V_c=\prod_{n=-\infty}^0\K\quad\mbox{and}\quad V_d=\bigoplus_{n=1}^\infty \K.$$
%%where $V_d$ is carrying the direct sum topology (i.e. $V_d$ is a discrete space by Remark~\ref{rem:direct sum}), and $V_c$ is endowed with the product topology (i.e., $V_c$ is linearly compact by Proposition~\ref{prop:lc properties}(f)). 
%In particular, $V_c$ can be considered as a linearly compact open subspace of $V$, that is, $V_c\in\BV$.% namely $V_c\cong \{0\}\times V_c\in \BV$. 
%
%The \emph{right (two-sided) Bernoulli shift} is
%$$\beta_\K\colon V\to V, \quad (x_n)_{n\in\Z}\mapsto(x_{n-1})_{n\in\Z},$$
%while the \emph{left (two-sided) Bernoulli shift} is
%$${}_\K\beta\colon V\to V, \quad (x_n)_{n\in\Z}\mapsto (x_{n+1})_{n\in\Z}.$$
%Clearly, $\beta_\K$ and ${}_\K\beta$ are topological automorphisms such that ${}_\K\beta^{-1}=\beta_\K.$
\item Let $\f\in\{{}_\K\beta,\beta_\K\}$. By Corollary~\ref{cor:base}(b),
\begin{equation*}
\ent(\f)=\sup\{H(\f,U)\mid U\in\BV,\ V_c\leq U\}.
\end{equation*}
Let $U\in\BV$ such that $V_c\leq U$. Since $V_c$ has finite codimension in $U$ by Proposition~\ref{prop:lc properties}(d,e), there exists $k\in\N_+$ such that
$$U\leq U':=\prod_{n=-\infty}^0\K\times\bigoplus_{n=1}^k \K\in\BV,$$
hence $H(\f,U)\leq H(\f,U')$ by Lemma~\ref{lem:mono}. Clearly,
\begin{equation*}
\ldots\leq {}_\K\beta^n(U')\leq\ldots\leq{}_\K\beta(U')\leq U'\leq\beta_\K(U')\leq\ldots\leq\beta_\K^n(U')\leq\ldots.
\end{equation*}
So, for all $n\in\N_+$, $T_n({}_\K\beta,U')=U'$, while
$$\dim\frac{T_{n+1}(\beta_\K,U')}{T_n(\beta_\K,U')}=\dim\frac{\beta^{n+1}_\K(U')}{\beta^n_\K U'}=\dim\frac{\beta_\K(U')}{U'}=1.$$ 
By Corollary~\ref{cor:base}(a), we can conclude that
\begin{equation*}
\ent({}_\K\beta)=0\quad\mbox{and}\quad\ent(\beta_\K)=1.
\end{equation*}
In particular, $\ent(\f)\neq\ent(\f^{-1})$ for $\f\in\{{}_\K\beta,\beta_\K\}$. %whenever $\f={}_\K\beta$ or $\f=\beta_\K$. 
%\end{example}
%(In the specific case of $V_c=0$ and $V=\bigoplus_{n=1}^\infty \K$, i.e., $V$ is discrete, we recover both the left and right Bernoulli shifts given in \cite[Example 2.11]{GBSalce} and $\ent(\f)=\ent_{\dim}(\f)$ for $\f\in\{{}_\K\beta,\beta_\K\}$.)

\item It is possible, slightly modifying the computations in item (a), to find that, for $F$ a finite dimensional vector space, $$\ent({}_F\beta)=0\quad \text{and}\quad \ent(\beta_F)=\dim F.$$
\end{enumerate}
\end{example}

%\begin{remark}
%Notice that the distinction between the left and right Bernoulli shift is dependent on the order of the factors $V_c, V_d$ into the construction of $V$ as their direct product. Namely, let 
%$$V_1=\prod_{n=-\infty}^0 \K\times \bigoplus_{n=1}^\infty \K\quad\mbox{and}\quad V_2=\bigoplus_{n=-\infty}^0 \K\times \prod_{n=1}^\infty \K.$$
% Then there is a canonical topological isomorphism $\sigma\colon V_1\to V_2$ given by $\sigma((x_n)_{n\in\N})=(x_{-n})_{n\in\Z}$. Clearly,
%$${}_{V_1}\beta=\sigma^{-1}\beta_{V_2}\sigma\quad\mbox{and}\quad\beta_{V_1}=\sigma^{-1}{}_{V_2}\beta \sigma.$$
%By Lemma~\ref{conj}, $\ent({}_{V_i}\beta)=\ent(\beta_{V_i})$, for $i=1,2$.
%\end{remark}

\section{Limit-free Formula}\label{lff-sec}

The aim of this subsection is to prove Proposition~\ref{lf-ent}, that provides a formula for the computation of the algebraic entropy avoiding the limit in the definition. 
%The proof of this result has been inspired by its counterpart for topological entropy \cite{GBVirili} for totally disconnected locally compact groups, which trades on a Willis' idea.
This formula is a fundamental ingredient in the proof of the Addition Theorem presented in the last section.

\begin{deff}\label{def:uminus}
Let $V$ be an l.l.c.\! vector space, $\phi:V\to V$ a continuous endomorphism and $U\in\BV$. Let:
\begin{enumerate}[-]
\item $U^{(0)}=U$;
\item $U^{(n+1)}=U+\phi^{-1} U^{(n)}$ for every $n\in\N$;
\item $U^-=\bigcup_{n\in\N} U^{(n)}$.
\end{enumerate}
\end{deff}

It can be proved by induction that $U^{(n)}\leq U^{(n+1)}$ for every $n\in\N$.
Since $U$ is open, clearly every $U^{(n)}$ is open as well, so also $U^-$ and $\phi^{-1}U^-$ are open linear subspaces of $V$. 

We see now that $U^-$ is the smallest linear subspace of $V$ containing $U$ and inversely $\phi$-invariant (i.e., $\phi^{-1}U^-\leq U^-$). Note that $U^-$ coincides with $T(\phi^{-1},U)$ when $\phi$ is an automorphism, otherwise it could be strictly smaller.

\begin{lemma}
Let $V$ be an l.l.c.\! vector space, $\phi:V\to V$ a continuous endomorphism and $U\in\BV$. Then:
\begin{enumerate}[(a)]
\item $\phi^{-1}U^-\leq U^-$;
\item if $W$ is a linear subspace of $V$ such that $U\leq W$ and $\phi^{-1}W\leq W$, then $U^-\leq W$.
\end{enumerate}
\end{lemma}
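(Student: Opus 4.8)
The plan is to prove the two assertions by induction on $n$, using only the recursive definition of $U^{(n)}$ and the elementary fact that $\phi^{-1}$ commutes with unions of increasing chains.

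\textbf{Part (a).} I would first observe that $U^- = \bigcup_{n\in\N} U^{(n)}$ is an increasing union, hence $\phi^{-1}U^- = \bigcup_{n\in\N}\phi^{-1}U^{(n)}$ (preimages preserve arbitrary unions, so no chain hypothesis is even needed here). Now for each $n\in\N$, by the very definition $U^{(n+1)} = U + \phi^{-1}U^{(n)}$, so in particular $\phi^{-1}U^{(n)}\leq U^{(n+1)}\leq U^-$. Taking the union over all $n$ gives $\phi^{-1}U^- = \bigcup_n \phi^{-1}U^{(n)} \leq U^-$, which is exactly (a).

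\textbf{Part (b).} Suppose $W$ is a linear subspace with $U\leq W$ and $\phi^{-1}W\leq W$. I would show $U^{(n)}\leq W$ for every $n\in\N$ by induction. The base case $U^{(0)}=U\leq W$ is the hypothesis. For the inductive step, assume $U^{(n)}\leq W$; then $\phi^{-1}U^{(n)}\leq \phi^{-1}W\leq W$ (monotonicity of preimage, then the inverse-invariance hypothesis), and since $U\leq W$ as well, $U^{(n+1)} = U+\phi^{-1}U^{(n)}\leq W$ because $W$ is a linear subspace closed under sums. Hence $U^{(n)}\leq W$ for all $n$, and taking the union yields $U^- = \bigcup_n U^{(n)}\leq W$.

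I do not anticipate any real obstacle: both parts are direct consequences of the recursive definition, the monotonicity $A\leq B \Rightarrow \phi^{-1}A\leq \phi^{-1}B$, and the fact that preimages distribute over unions. The only point worth stating explicitly is that the already-noted inclusion $U^{(n)}\leq U^{(n+1)}$ makes $U^-$ a genuine increasing union, so that it is itself a linear subspace and the manipulations above are legitimate; everything else is routine set-theoretic bookkeeping. Together, (a) says $U^-$ is inversely $\phi$-invariant and contains $U$, while (b) says it is the least such subspace, which is the characterization we will use in the sequel.
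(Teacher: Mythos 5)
Your argument is correct and is essentially the paper's own proof: part (a) follows from $\phi^{-1}U^{(n)}\leq U^{(n+1)}$ together with the fact that preimages distribute over unions, and part (b) is the same induction showing $U^{(n)}\leq W$ for all $n$. You have simply written out the details that the paper leaves implicit; nothing further is needed.
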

\begin{proof}
(a) follows from the fact that $\phi^{-1}U^{(n)}\leq U^{(n+1)}$ for every $n\in\N$.

(b) By the hypothesis, one can prove by induction that $U^{(n)}\leq W$ for every $n\in\N$; hence, $U^-\leq W$.
\end{proof}

In the next lemma we collect some other properties that we use in the sequel.

\begin{lemma}\label{Umeno}
Let $V$ be an l.l.c.\! vector space, $\phi:V\to V$ a continuous endomorphism and $U\in\BV$. Then:
\begin{enumerate}[(a)]
\item $U^-=U+\phi^{-1}U^-$;
\item $\frac{U^-}{\phi^{-1}U^-}$ has finite dimension.
\end{enumerate}
\end{lemma}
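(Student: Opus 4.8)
\textbf{Plan for the proof of Lemma~\ref{Umeno}.}

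For part (a), the plan is to prove the chain of equalities $U^{(n+1)} = U + \phi^{-1}U^{(n)}$ (the definition) combined with the fact that taking unions over $n$ is compatible with the sum and with $\phi^{-1}$ in the relevant sense. More precisely, I would first observe that $U^- = \bigcup_{n\in\N}U^{(n+1)}$ since the chain $\{U^{(n)}\}_n$ is increasing (so dropping the first term does not change the union), and then $U^- = \bigcup_{n\in\N}(U + \phi^{-1}U^{(n)}) = U + \bigcup_{n\in\N}\phi^{-1}U^{(n)}$. The one point that needs a word of justification is that $\bigcup_{n}\phi^{-1}U^{(n)} = \phi^{-1}\big(\bigcup_n U^{(n)}\big) = \phi^{-1}U^-$: the inclusion $\subseteq$ is trivial, and $\supseteq$ holds because if $\phi v \in U^- = \bigcup_n U^{(n)}$ then $\phi v \in U^{(n)}$ for some $n$ (using again that the chain is increasing), so $v \in \phi^{-1}U^{(n)}$. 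This gives $U^- = U + \phi^{-1}U^-$.

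For part (b), the plan is to use part (a) together with the first isomorphism theorem and the linear compactness available in the l.l.c.\ setting. From (a) we get
\[
\frac{U^-}{\phi^{-1}U^-} = \frac{U + \phi^{-1}U^-}{\phi^{-1}U^-} \cong \frac{U}{U \cap \phi^{-1}U^-},
\]
so it suffices to show $U/(U\cap\phi^{-1}U^-)$ is finite-dimensional. Now $U$ is linearly compact, and $U\cap\phi^{-1}U^-$ is an open linear subspace of $U$ (it is the intersection of $U$ with the open subspace $\phi^{-1}U^-$, which is open since $U^-$ is open and $\phi$ is continuous), hence by Proposition~\ref{prop:lc properties}(d,e) the quotient $U/(U\cap\phi^{-1}U^-)$ has finite dimension. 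This gives the claim.

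The only mild subtlety—and the step I would be most careful about—is the manipulation $\bigcup_n \phi^{-1}U^{(n)} = \phi^{-1}\bigcup_n U^{(n)}$ in part (a): it relies essentially on the chain $\{U^{(n)}\}_n$ being increasing (already noted in the text right after Definition~\ref{def:uminus}), since for a general family preimages do not commute with arbitrary unions in the direction needed, but for a directed (here: totally ordered) family of subspaces the argument above works. Everything else is a routine application of the isomorphism theorems and of Proposition~\ref{prop:lc properties}. No genuine obstacle is expected.
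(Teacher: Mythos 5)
Your proof is correct and follows essentially the same route as the paper's: the identical chain of equalities $U+\phi^{-1}U^-=U+\bigcup_n\phi^{-1}U^{(n)}=\bigcup_n(U+\phi^{-1}U^{(n)})=U^-$ for (a), and for (b) the same isomorphism $\frac{U^-}{\phi^{-1}U^-}\cong\frac{U}{U\cap\phi^{-1}U^-}$ combined with Proposition~\ref{prop:lc properties}(d,e) applied to the open subspace $U\cap\phi^{-1}U^-$ of the linearly compact $U$. The only step you flag as delicate, $\bigcup_n\phi^{-1}U^{(n)}=\phi^{-1}\bigl(\bigcup_n U^{(n)}\bigr)$, is in fact not delicate at all: preimages commute with arbitrary unions for any family of subsets (it is images versus intersections that fail in general), so no monotonicity of the chain is needed there -- the increasing chain is only needed to know that $U^-$ is a linear subspace.
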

\begin{proof}
(a) follows from the equalities $$U+\phi^{-1}U^-=U+\phi^{-1} \bigcup_{n\in\N}U^{(n)}=U+\bigcup_{n\in\N}\phi^{-1}U^{(n)}=\bigcup_{n\in\N}(U+\phi^{-1}U^{(n)})=U^-.$$

(b) The quotient $\frac{U}{U\cap\phi^{-1}U^-}$ has finite dimension by Proposition~\ref{prop:lc properties}(d,e), since $U\cap\phi^{-1}U^-\leq U$ is open and $U$ is linearly compact. In view of item (a) we have the isomorphism
 $$\frac{U^-}{\phi^{-1}U^-}=\frac{U+\phi^{-1}U^-}{\phi^{-1}U^-}\cong \frac{U}{U\cap\phi^{-1}U^-},$$
so we conclude that also $\frac{U^-}{\phi^{-1}U^-}$ has finite dimension.
\end{proof}

The next lemma is used in the proof of Proposition \ref{lf-ent}.

\begin{lemma}\label{T-U}
Let $V$ be an l.l.c.\! vector space, $\phi:V\to V$ a continuous endomorphism and $U\in\BV$. Then, for every $n\in\N_+$,
$$\phi^{-n}T_{n}(\phi,U)=\phi^{-1}U^{(n-1)}.$$
\end{lemma}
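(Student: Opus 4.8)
The plan is to prove the identity by induction, though a direct induction on $n$ turns out to be awkward; instead I would establish a slightly more flexible claim that interpolates between the two sides. The preliminaries needed are the elementary behaviour of preimages under the linear map $\phi$: for linear subspaces $A,B$ of $V$ one always has $\phi^{-1}(A+B)\supseteq\phi^{-1}A+\phi^{-1}B$, with equality whenever $B\subseteq\Im\phi$ (lift a representative of the $B$-component); moreover $\phi^{-1}(\phi A)=A+\ker\phi$, and $\ker\phi\subseteq\phi^{-1}C$ for every subspace $C$, since $0\in C$. Finally, adopting the convention $T_0(\phi,U)=0$, one has $T_{m+1}(\phi,U)=U+\phi T_m(\phi,U)$ for all $m\in\N$, and $\phi T_m(\phi,U)=\phi U+\dots+\phi^m U\subseteq\Im\phi$ for $m\geq1$.

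With these in hand I would prove, by induction on $k$ in the range $0\leq k\leq n-1$, the identity
\[
\phi^{-n}T_n(\phi,U)=\phi^{-(n-k)}\bigl(U^{(k)}+\phi T_{n-1-k}(\phi,U)\bigr).
\]
The case $k=0$ is immediate from $T_n(\phi,U)=U+\phi T_{n-1}(\phi,U)$. For the passage from $k$ to $k+1$ (where $k\leq n-2$, so that $n-1-k\geq1$), I peel off one factor $\phi^{-1}$: since $\phi T_{n-1-k}(\phi,U)\subseteq\Im\phi$ the preimage distributes over the sum, and using $\phi^{-1}\bigl(\phi T_{n-1-k}(\phi,U)\bigr)=T_{n-1-k}(\phi,U)+\ker\phi=T_{n-1-k}(\phi,U)$, then $T_{n-1-k}(\phi,U)=U+\phi T_{n-2-k}(\phi,U)$, and finally the defining relation $U+\phi^{-1}U^{(k)}=U^{(k+1)}$, I obtain $\phi^{-1}\bigl(U^{(k)}+\phi T_{n-1-k}(\phi,U)\bigr)=U^{(k+1)}+\phi T_{n-2-k}(\phi,U)$, which is exactly the claim with $k+1$ in place of $k$. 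Specializing to $k=n-1$ and using $T_0(\phi,U)=0$ yields $\phi^{-n}T_n(\phi,U)=\phi^{-1}U^{(n-1)}$; the case $n=1$ is the trivial identity $\phi^{-1}U=\phi^{-1}U^{(0)}$.

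I expect the only genuine difficulty to be the failure of preimages to distribute over sums in general, i.e. that $\phi^{-1}(A+B)$ can be strictly larger than $\phi^{-1}A+\phi^{-1}B$. The whole argument is arranged so that the summand split off at each stage is $\phi T_m(\phi,U)\subseteq\Im\phi$, which legitimizes the distribution, while the kernel term produced by $\phi^{-1}\phi(\cdot)$ is absorbed for free since it lies in every preimage of a subspace. Everything else is bookkeeping with the two recursions $T_{m+1}(\phi,U)=U+\phi T_m(\phi,U)$ and $U^{(k+1)}=U+\phi^{-1}U^{(k)}$.
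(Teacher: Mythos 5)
Your argument is correct, but it takes a genuinely different route from the paper's. The paper proves the identity by a direct induction on $n$, establishing the two inclusions between $\phi^{-(n+1)}T_{n+1}(\phi,U)$ and $\phi^{-1}U^{(n)}$ by chasing elements and using the inductive hypothesis in the form $U^{(n)}=U+\phi^{-n}T_n(\phi,U)$. You instead fix $n$ and run an auxiliary induction on $k$ through the interpolating identities $\phi^{-n}T_n(\phi,U)=\phi^{-(n-k)}\bigl(U^{(k)}+\phi T_{n-1-k}(\phi,U)\bigr)$, peeling off one $\phi^{-1}$ at a time; the work is carried entirely by subspace-level facts about preimages (distribution of $\phi^{-1}$ over a sum when one summand lies in $\Im\phi$, and absorption of $\ker\phi$ into $\phi^{-1}U^{(k)}$), so no element chase is needed. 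Both arguments are sound and of comparable length; yours isolates exactly where $\Im\phi$ and $\ker\phi$ enter, while the paper's is more elementary and self-contained. One presentational caveat: the displayed chain $\phi^{-1}\bigl(\phi T_{n-1-k}(\phi,U)\bigr)=T_{n-1-k}(\phi,U)+\ker\phi=T_{n-1-k}(\phi,U)$ is false as a standalone equality, since $\ker\phi$ need not be contained in $T_{n-1-k}(\phi,U)$; it only becomes valid after the kernel term is absorbed into the summand $\phi^{-1}U^{(k)}$ of the ambient sum, as you correctly explain in your closing remark. That justification should be stated at the point where the equality is invoked rather than deferred.
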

\begin{proof}
We proceed by induction on $n\in\N_+$. We write simply $T_n=T_n(\phi,U)$.

If $n=1$ we have $\phi^{-1}T_1=\phi^{-1}U=\phi^{-1}U^{(0)}$.
Assume now that the property holds for $n\in\N_+$, we prove it for $n+1$, that is, we verify that
\begin{equation}\label{n+1}
\phi^{-(n+1)}T_{n+1}=\phi^{-1}U^{(n)}.
\end{equation}
Let $x\in \phi^{-1}U^{(n)}$. Then,  by inductive hypothesis, $$\phi(x)\in U^{(n)}=U+\phi^{-1}U^{(n-1)}=U+\phi^{-n}T_n.$$
Consequently, $\phi^{n+1}(x)=\phi^n(\phi(x))\in \phi^n U+ T_n=T_{n+1}$; this shows that $x\in\phi^{-(n+1)}T_{n+1}$. Therefore, $\phi^{-1}U^{(n)}\leq \phi^{-(n+1)}T_{n+1}$.

To verify the converse inclusion, let $x\in \phi^{-(n+1)}T_{n+1}$. Then $\phi^{n+1}(x)\in T_{n+1}=T_n+\phi^nU$, so $\phi^{n+1}(x)=y+\phi^n(u)$, for some $y\in T_n$ and $u\in U$.
Therefore, $\phi^n(\phi(x)-u)=y\in T_n$, that is, $\phi(x)-u\in\phi^{-n}T_n=\phi^{-1}U^{(n-1)}$ by inductive hypothesis. Hence, $\phi(x)\in U+\phi^{-1}U^{(n-1)}=U^{(n)}$, and we can conclude that $x\in\phi^{-1}U^{(n)}$.
Thus, \eqref{n+1} is verified. So, the induction principle gives the thesis.
\end{proof}

We are now in position to prove the Limit-free Formula, where clearly we use that $\dim \frac{U^-}{\phi^{-1}U^-}$ has finite dimension by Lemma \ref{Umeno}(b).

\begin{proposition}[Limit-free Formula]\label{lf-ent}
Let $V$ be an l.l.c.\! vector space, $\phi:V\to V$ a continuous endomorphism and $U\in\BV$. Then $$H(\phi,U)=\dim \frac{U^-}{\phi^{-1}U^-}.$$
\end{proposition}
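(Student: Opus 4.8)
The plan is to relate the dimension $\dim\frac{U^-}{\phi^{-1}U^-}$ to the stationary value $\alpha$ of the sequence $\alpha_n = \dim\frac{T_{n+1}(\phi,U)}{T_n(\phi,U)}$ identified in Proposition~\ref{entvalue}, which equals $H(\phi,U)$. The bridge between the two descriptions is Lemma~\ref{T-U}, which says $\phi^{-n}T_n(\phi,U) = \phi^{-1}U^{(n-1)}$, together with Lemma~\ref{Umeno}(a), $U^- = U + \phi^{-1}U^-$, and the fact that $U^-$ is the increasing union of the $U^{(n)}$, so that $\phi^{-1}U^- = \bigcup_{n}\phi^{-1}U^{(n)}$.

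First I would exploit the relation $U^{(n)} = U + \phi^{-1}U^{(n-1)} = U + \phi^{-n}T_n(\phi,U)$ (using Lemma~\ref{T-U}) to express the finite-dimensional quotient $\frac{U^{(n)}}{\phi^{-1}U^{(n-1)}} = \frac{U + \phi^{-n}T_n}{\phi^{-n}T_n} \cong \frac{U}{U \cap \phi^{-n}T_n}$. Applying $\phi^n$ (or rather comparing codimensions inside $T_{n+1}$), one gets $\dim\frac{U^{(n)}}{\phi^{-1}U^{(n-1)}}$ in terms of $\dim\frac{T_{n+1}(\phi,U)}{T_n(\phi,U)} = \alpha_n$; more precisely the map $\phi^{n}$ induces a surjection (or isomorphism after quotienting by kernel) between $\frac{U^{(n)}}{\phi^{-1}U^{(n-1)}}$ and $\frac{\phi^nU + T_n}{T_n} = \frac{T_{n+1}}{T_n}$, so the dimensions are related, and in the stationary range they should coincide. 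The key point is that the chain $U^{(0)}\leq U^{(1)}\leq\cdots$ is increasing with eventually-constant jumps of size $\alpha$, mirroring the chain $T_1\leq T_2\leq\cdots$; since $U^- = \bigcup_n U^{(n)}$ and $\phi^{-1}U^- = \bigcup_n \phi^{-1}U^{(n)} = \bigcup_n U^{(n+1)}$... wait, that would make $\phi^{-1}U^- \supseteq U^-$, which is false in general.

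Let me re-think: $\phi^{-1}U^{(n)} \leq U^{(n+1)}$ but need not equal it. So instead I would argue directly with dimensions. Write $d_n = \dim\frac{U^{(n)}}{\phi^{-1}U^{(n-1)}}$ for $n\geq 1$; I claim $d_n$ is eventually equal to $\alpha$, and that $\dim\frac{U^-}{\phi^{-1}U^-}$ equals this eventual value. For the first claim, using $\phi^{-1}U^{(n-1)} = \phi^{-n}T_n$ and $U^{(n)} = U + \phi^{-n}T_n$, apply $\phi^n$: it carries $U^{(n)}$ onto $\phi^nU^{(n)} = \phi^nU + T_n = T_{n+1}$ modulo $\ker\phi^n \cap U^{(n)}$, and carries $\phi^{-n}T_n$ onto $T_n$ modulo the same kernel issue; a diagram chase in the spirit of the proof of Proposition~\ref{entvalue} shows $d_n = \alpha_n$ for $n$ large (the kernel contributions stabilize). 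For the second claim, since $U^- = U + \phi^{-1}U^-$ (Lemma~\ref{Umeno}(a)) and $\phi^{-1}U^- = \bigcup_n \phi^{-1}U^{(n)}$, while $U^- = \bigcup_n U^{(n)}$, one has $\frac{U^-}{\phi^{-1}U^-} = \bigcup_n \frac{U^{(n)} + \phi^{-1}U^-}{\phi^{-1}U^-}$; monotonicity of these finite-dimensional pieces plus the fact that $\frac{U^{(n)}+\phi^{-1}U^-}{\phi^{-1}U^-}$ is a quotient of $\frac{U^{(n)}}{\phi^{-1}U^{(n-1)}}$ of bounded (hence eventually constant) dimension forces $\dim\frac{U^-}{\phi^{-1}U^-} = \lim_n d_n = \alpha = H(\phi,U)$.

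The main obstacle I anticipate is the bookkeeping of kernels: $\phi^n$ is not injective in general, so passing between $\frac{U^{(n)}}{\phi^{-1}U^{(n-1)}}$ and $\frac{T_{n+1}(\phi,U)}{T_n(\phi,U)}$ requires controlling $\ker\phi^n \cap U^{(n)}$ versus $\ker\phi^n \cap \phi^{-1}U^{(n-1)}$, and showing these defects agree in the stationary range. This is exactly the kind of argument carried out in the proof of Proposition~\ref{entvalue}, so I would mirror that technique; alternatively, one can sidestep it by proving the two monotone chains $\{\dim\frac{T_n}{U}\}_n$ and $\{\dim\frac{U^{(n)}}{U\cap\phi^{-1}U^-}\}_n$ (or similar) have the same eventual slope via a direct incremental comparison, which avoids ever inverting $\phi$ on the nose.
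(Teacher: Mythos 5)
Your overall strategy is the same as the paper's: set $d_n=\dim\frac{U^{(n)}}{\phi^{-1}U^{(n-1)}}$, identify $d_n$ with $\alpha_n=\dim\frac{T_{n+1}(\phi,U)}{T_n(\phi,U)}$ via Lemma~\ref{T-U}, and then identify $\dim\frac{U^-}{\phi^{-1}U^-}$ with the stationary value $\alpha$ from Proposition~\ref{entvalue}. The first identification is correct, and your worry about kernels is unfounded: since $\phi^{-1}U^{(n-1)}=\phi^{-n}T_n$ is the \emph{full} preimage of $T_n$ under $\phi^n$, it contains $\ker\phi^n$, so the assignment $x+\phi^{-1}U^{(n-1)}\mapsto\phi^n(x)+T_n$ gives a well-defined, injective and surjective map $\frac{U^{(n)}}{\phi^{-1}U^{(n-1)}}\to\frac{T_{n+1}}{T_n}$, whence $d_n=\alpha_n$ for \emph{every} $n$, with no stabilization of ``kernel defects'' needed. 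This is exactly the isomorphism $\Phi$ in the paper's proof.

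The gap is in the second step. Observe that $U^{(n)}+\phi^{-1}U^-=U+\phi^{-1}U^-=U^-$ already for every $n\geq 1$ (because $U\leq U^{(n)}$ and $\phi^{-1}U^{(n-1)}\leq\phi^{-1}U^-$), so your increasing union of ``pieces'' is constant and equal to all of $\frac{U^-}{\phi^{-1}U^-}$; the monotone-union argument contributes nothing. What you actually have is a surjection $\frac{U^{(n)}}{\phi^{-1}U^{(n-1)}}\to\frac{U^{(n)}}{U^{(n)}\cap\phi^{-1}U^-}\cong\frac{U^-}{\phi^{-1}U^-}$, which only yields $\dim\frac{U^-}{\phi^{-1}U^-}\leq\alpha$; ``bounded hence eventually constant dimension'' does not force equality. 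For the reverse inequality you must show this surjection is eventually injective, i.e.\ that $U^{(n)}\cap\phi^{-1}U^-=\phi^{-1}U^{(n-1)}$ for $n$ large, which by the modular law and Lemma~\ref{Umeno}(a) reduces to showing that the increasing chain $U\cap\phi^{-1}U^{(n)}$ stabilizes at $U\cap\phi^{-1}U^-$. The paper gets this from a descending-chain argument: each $\frac{U}{U\cap\phi^{-1}U^{(n)}}$ is finite dimensional by Proposition~\ref{prop:lc properties}(d,e) ($U$ linearly compact, $\phi^{-1}U^{(n)}$ open), and $\frac{U}{U\cap\phi^{-1}U^{(n+1)}}$ is a quotient of $\frac{U}{U\cap\phi^{-1}U^{(n)}}$, so the dimensions decrease and are eventually constant, forcing the nested subspaces $U\cap\phi^{-1}U^{(n)}$ to coincide from some $n_1$ on. This stabilization is the missing idea; once supplied, your outline closes up into the paper's proof.
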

\begin{proof}
We write simply $T_n=T_n(\phi,U)$ for every $n\in\N_+$. By Proposition \ref{entvalue}, there exist $n_0\in\N_+$ and $\alpha\in\N$, such that for every $n\geq n_0$, $H(\phi,U)=\alpha$, where $\alpha=\dim \frac{T_{n+1}}{T_n}$. So, our aim is to prove that $\alpha=\dim \frac{U^-}{\phi^{-1}U^-}$.

For every $n\in\N$, since $U\cap \phi^{-1}U^{(n)}\leq U$ is open and $U$ is linearly compact, by Proposition~\ref{prop:lc properties}(d,e) the quotient $\frac{U}{U\cap \phi^{-1}U^{(n)}}$ has finite dimension; moreover, $U\cap \phi^{-1}U^{(n)}\leq U\cap \phi^{-1}U^{(n+1)}$ so $\frac{U}{U\cap \phi^{-1}U^{(n+1)}}$ is a quotient of $\frac{U}{U\cap \phi^{-1}U^{n}}$. 
The decreasing sequence of finite-dimensional vector spaces $\left\{\frac{U}{U\cap \phi^{-1}U^{n}}\mid n\in\N\right\}$ must stabilize; this means that there exists $n_1\in\N$ such that $U\cap \phi^{-1}U^{(n)}=U\cap \phi^{-1}U^{(n_1)}$ for every $n\geq n_1$. Hence, for every $m\geq n_1$,
$$U\cap \phi^{-1}U^{(m)}=\bigcup_{n\in\N}(U\cap \phi^{-1}U^{(n)})=U\cap \bigcup_{n\in\N}\phi^{-1}U^{(n)}=U\cap\phi^{-1}\bigcup_{n\in\N}U^{(n)}=U\cap \phi^{-1}U^-.$$
Fix now $m\geq\max\{n_0,n_1\}$; since $\frac{U^-}{\phi^{-1}U^-}=\frac{U+\phi^{-1}U}{\phi^{-1}U}\cong\frac{U}{U\cap \phi^{-1}U^-}$ by Lemma \ref{Umeno}(a), we have
\begin{eqnarray}\nonumber
\dim \frac{U^-}{\phi^{-1}U^-}&=&\dim \frac{U}{U\cap \phi^{-1}U^-}=\dim\frac{U}{U\cap \phi^{-1}U^{(m)}}\\ \nonumber
&=&\dim\frac{U+\phi^{-1}U^{(m)}}{\phi^{-1}U^{(m)}}=\dim \frac{U^{(m+1)}}{\phi^{-1}U^{(m)}}.
\end{eqnarray}
We see now that $$\dim\frac{U^{(m)}}{\phi^{-1}U^{(m-1)}}=\dim\frac{T_{m+1}}{T_m}=\alpha$$ and this concludes the proof. To this end, noting that $$\frac{U^{(m)}}{\phi^{-1}U^{(m-1)}}=\frac{U+\phi^{-1}U^{(m-1)}}{\phi^{-1}U^{(m-1)}}\quad\text{and}\quad\frac{T_{m+1}}{T_m}=\frac{\phi^{m+1}U+T_m}{T_m},$$ 
define 
\begin{align*}
\Phi:&\frac{U+\phi^{-1}U^{(m-1)}}{\phi^{-1}U^{(m-1)}}\longrightarrow \frac{\phi^{m+1}U+T_m}{T_m} \\
& x+\phi^{-1}U^{(m-1)}\mapsto\phi^m(x)+T_m.
\end{align*}
Then $\Phi$ is a surjective homomorphism by construction and it is well-defined and injective since $\phi^{-m}T_m=\phi^{-1}U^{(m-1)}$ by Lemma \ref{T-U}.
\end{proof}

\section{Addition Theorem}\label{AT-sec}

%In the previous section the notion of algebraic entropy for continuous endomorphisms of l.l.c.\! spaces has been defined. This entropy can be regarded as a map from the set of endomorphisms of a fixed object in the category $\LLC$ to $\N_0\cup\{\infty\}$.\marginpar{decidi se \\ $\N$ ha 0} This point of view can be made categorical as follows.
%
%Given a category $\mathfrak{C}$, let $\Flow(\mathfrak{C})$ denote the category of \emph{flows} of $\mathfrak{C}$, whose objects are the pairs $(A,f)$ where $A$ is an object in $\mathfrak{C}$ and $f\colon A\to A$ is a morphism. A morphism $\alpha\colon(A,f)\to(B,g)$ in $\Flow(\mathfrak{C})$ is a commutative square
%\begin{equation}
%\xymatrix{
%A\ar[d]_\alpha\ar[r]^f&A\ar[d]^\alpha\\
%B\ar[r]^g&B
%}
%\end{equation}
%where $\alpha\colon A\to B$ is a morphism in $\mathfrak{C}$. By using flows, it is possible to consider the algebraic entropy as follows
%\begin{equation}
%\ent\colon\Flow(\LLC)\to\N_0\cup\{\infty\}.
%\end{equation}
%Of course, we may restrict $\ent$ to the full subcategories $\Flow(\LC)$ and $\Flow({}_\K\Vect)$. Notice that if $(V,\f)$ is an object of $\Flow(\LLC)$ (or of one of the latter subcategories), then a closed $\f$-invariant linear subspace $W$ of $V$ can be considered naturally as a subobject $(W,\f\restriction_W)$ of $(V,\f)$, and  $(V/W,\overline\f)$ as a quotient object.

This section is devoted to the proof of the Addition Theorem for the algebraic entropy $\ent$ for l.l.c.\! vector spaces (see Theorem~\ref{thm:ATllc}).

\medskip
Let $V$ be an l.l.c.\! vector space and $\f:V\to V$ a continuous endomorphism. Theorem~\ref{thm:dec}  allows us to decompose $V$ into the direct sum of a linearly compact open subspace $V_c$ and a discrete linear subspace $V_d$ of $V$, namely, $V\cong V_c\oplus V_d$ topologically. So, assume that $V=V_c\oplus V_d$ and let
\begin{equation}\label{eq:injproj}
\iota_*\colon V_*\to V,\quad p_*\colon V\to V_*,\quad *\in\{c,d\},
\end{equation}
be respectively the canonical embeddings and projections. Accordingly, we may associate to $\f$ the following decomposition
\begin{equation}\label{eq:fdec}
\f=
\begin{pmatrix}
  \f_{cc} & \f_{dc} \\
  \f_{cd} & \f_{dd}
\end{pmatrix},
\end{equation}
where $\phi_{\bullet*}:V_\bullet\to V_*$ is the composition $\phi_{\bullet*}=p_*\circ\phi\circ\iota_\bullet$ for $\bullet, *\in\{c,d\}$.
Therefore, each $\phi_{\bullet*}$ is continuous as it is composition of continuous homomorphisms.

\begin{lemma}\label{lem:kerim} 
%Let $V\cong V_c\oplus V_d$ be an l.l.c.\! vector space and $\f:V\to V$ a continuous endomorphism. Moreover, let $\f_{cd}\colon V_c\to V_d$ be the continuous endomorphism as defined in \eqref{eq:fdec}. Thus $\Im(\f_{cd})$ is a finite dimensional linear subspace of $V_d$ and $\ker(\f_{cd})\leq V_c$ is a linearly compact open linear subspace of $V$.
In the above notations, consider $\f_{cd}\colon V_c\to V_d$. Then:
\begin{enumerate}[(a)]
\item $\Im(\f_{cd})\in\mathcal B(V_d)$;
\item $\ker(\f_{cd})\in\mathcal B(V_c)\subseteq\BV$.
\end{enumerate}
\end{lemma}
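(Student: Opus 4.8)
The plan is to rewrite $\Im(\f_{cd})$ and $\ker(\f_{cd})$ directly in terms of $\phi$ and the canonical maps $\iota_c,p_d$, and then apply Proposition~\ref{prop:lc properties}, exploiting that $V_c$ is linearly compact and open in $V$ whereas $V_d$ is discrete.

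For (a), identify $V_c$ with $\iota_c(V_c)\leq V$, so that $\Im(\f_{cd})=p_d(\phi(V_c))$. Since $V_c$ is linearly compact and $\phi$ is continuous, $\phi(V_c)$ is linearly compact by Proposition~\ref{prop:lc properties}(c), applied to the surjective continuous homomorphism $V_c\to\phi(V_c)$ obtained by restriction and corestriction; and then $p_d(\phi(V_c))$ is linearly compact by the same proposition. Being a linearly compact subspace of the discrete space $V_d$, it is finite-dimensional by Proposition~\ref{prop:lc properties}(d), and hence open in $V_d$ since every subspace of a discrete space is open. Therefore $\Im(\f_{cd})\in\mathcal B(V_d)$.

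For (b), note that $\ker p_d=V_c$ as a subspace of $V$, so $\ker(\f_{cd})=\{v\in V_c:\phi(v)\in V_c\}=V_c\cap\phi^{-1}(V_c)$. As $V_c$ is open in $V$ and $\phi$ is continuous, $\phi^{-1}(V_c)$ is open, hence $\ker(\f_{cd})$ is an open linear subspace of $V$, a fortiori of $V_c$. On the other hand $\ker(\f_{cd})$ is closed in $V_c$, being the kernel of the continuous homomorphism $\f_{cd}\colon V_c\to V_d$ into a Hausdorff space, so it is linearly compact by Proposition~\ref{prop:lc properties}(b). Thus $\ker(\f_{cd})\in\mathcal B(V_c)$, and $\mathcal B(V_c)\subseteq\BV$ because every open linear subspace of the open subspace $V_c$ is open in $V$ and linear compactness is an intrinsic property. (Alternatively, once (a) is available, the isomorphism $V_c/\ker(\f_{cd})\cong\Im(\f_{cd})$ shows $\ker(\f_{cd})$ has finite codimension in $V_c$, which already forces it to be open.) No genuine difficulty arises here; the only point to watch is the bookkeeping of the identifications $V_*\cong\iota_*(V_*)$ and of $\ker p_d$, together with the observation that it is precisely the interplay of \emph{$V_c$ linearly compact} and \emph{$V_d$ discrete} that makes $\Im(\f_{cd})$ finite-dimensional.
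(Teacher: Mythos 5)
Your proof is correct and follows essentially the same route as the paper: part (a) is identical (continuous image of a linearly compact space is linearly compact, hence finite-dimensional in the discrete space $V_d$), and for part (b) you merely substitute the direct observation $\ker(\f_{cd})=V_c\cap\phi^{-1}(V_c)$ for openness where the paper uses the finite-codimension argument via $V_c/\ker(\f_{cd})\cong\Im(\f_{cd})$ --- which you also note as an alternative. Both variants are valid and rest on the same facts from Proposition~\ref{prop:lc properties}.
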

\begin{proof} 
(a) Since $V_d$ is discrete, by Proposition~\ref{prop:lc properties}(c,d) we have that $\Im(\f_{cd})\leq V_d$ has finite dimension, hence $\Im(\f_{cd})\in\mathcal B(V_d)=\{F\leq V_d\mid \dim F<\infty\}$.

(b) As $\ker(\f_{cd})$ is a closed linear subspace of $V_c$, which is linearly compact, then $\ker(\f_{cd})$ is linearly compact as well by Proposition~\ref{prop:lc properties}(b).  Since $V_c/\ker(\f_{cd})\cong\Im(\f_{cd})$ is finite dimensional by item (a), $V_c/\ker(\f_{cd})$ is discrete and so $\ker(\f_{cd})$ is open in $V_c$; therefore, $\ker(\phi_{cd})\in\mathcal B(V_c)$. 
\end{proof}

We show now that the only positive contribution to the algebraic entropy of $\f$ comes from its ``discrete component'' $\phi_{dd}$.

\begin{proposition}\label{prop:resdd}
%Let $V\cong V_c\oplus V_d$ be an l.l.c.\! vector space and $\f:V\to V$ a continuous endomorphism. Moreover, let $\f_{dd}\colon V_d\to V_d$ be the continuous endomorphism as defined in \eqref{eq:fdec}. Thus 
In the above notations, $\ent(\f)=\ent(\f_{dd})$.
\end{proposition}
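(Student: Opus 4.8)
The plan is to show the two inequalities $\ent(\f)\le\ent(\f_{dd})$ and $\ent(\f)\ge\ent(\f_{dd})$ separately, exploiting the decomposition $V=V_c\oplus V_d$ together with the monotonicity and (weak) additivity properties already established, and using the fact (Corollary~\ref{lc0}) that the linearly compact part carries no entropy.

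For the inequality $\ent(\f)\ge\ent(\f_{dd})$, I would observe that $\f_{dd}=\overline\f$ is the endomorphism induced by $\f$ on the quotient $V/V_c\cong V_d$: indeed $V_c$ need not be $\f$-invariant, but $p_d\circ\f\circ\iota_d=\f_{dd}$ is precisely the map on the quotient by $V_c$ provided $V_c$ is $\f$-invariant — which it is \emph{not} in general. So instead I would pass to a genuine $\f$-invariant closed linear subspace: since $V_c$ is linearly compact and open, its $\f$-trajectory $W=T(\f,V_c)$ is a closed (being open, by Proposition~\ref{rem:complete}(b) and because it is a union of cosets of $V_c$ it is open, hence closed) $\f$-invariant linear subspace containing $V_c$. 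Wait — $W$ need not be linearly compact. Let me instead use the following cleaner route: by Lemma~\ref{lem:kerim}, $\ker(\f_{cd})\in\BV$ and $\Im(\f_{cd})$ is finite-dimensional. One checks that $V_c':=\bigcap_{n\ge0}\f^{-n}(\ker(\f_{cd})+V_c)\cap V_c$, or more simply the largest $\f$-invariant subspace of $V$ contained in $V_c$, is closed, $\f$-invariant and linearly compact, and that $V/V_c'$ has a discrete part isomorphic (up to finite-dimensional pieces) to $V_d$, so that the endomorphism induced on $V/V_c'$ has the same entropy as $\f_{dd}$ by the remark after Proposition~\ref{monotonicity} (equality in Monotonicity(b) when the kernel subspace is linearly compact). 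Then Monotonicity(b) gives $\ent(\f)\ge\ent(\overline\f)=\ent(\f_{dd})$.

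For the reverse inequality $\ent(\f)\le\ent(\f_{dd})$, the idea is that the entropy can be computed on the cofinal family of $U\in\BV$ of the form $U=V_c\oplus F_d$ with $F_d\le V_d$ finite-dimensional (such $U$ are cofinal by Corollary~\ref{cor:base} together with Proposition~\ref{prop:basis} or a direct argument as in the weak Addition Theorem). For such a $U$ one analyzes $T_n(\f,U)/U$: writing an element of $\f^k U$ in coordinates, the $V_c$-component always lands back in $V_c\le U$ by construction, except for the bounded-rank contribution of $\f_{dc}$ applied to $F_d$, while the $V_d$-component is governed by $T_n(\f_{dd},F_d)$ together with a bounded contribution from $\f_{cd}(V_c)$, which is finite-dimensional by Lemma~\ref{lem:kerim}(a). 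Thus $\dim\big(T_n(\f,U)/U\big)\le \dim\big(T_n(\f_{dd},F_d)/F_d\big)+C$ for a constant $C$ independent of $n$ (absorbing the finitely many dimensions coming from $\Im\f_{cd}$ and the off-diagonal terms), and dividing by $n$ and letting $n\to\infty$ yields $H(\f,U)\le H(\f_{dd},F_d)\le\ent(\f_{dd})$. Taking the supremum over the cofinal family gives $\ent(\f)\le\ent(\f_{dd})$.

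The main obstacle I anticipate is the bookkeeping of the off-diagonal components $\f_{cd}$ and $\f_{dc}$ inside the partial trajectories $T_n(\f,U)$: because $V_c$ is not $\f$-invariant, applying $\f$ repeatedly mixes the two summands, and one must check carefully that all the ``leakage'' is of bounded dimension — uniformly in $n$ — so that it contributes $0$ after division by $n$. Concretely, the key finiteness inputs are $\dim\Im(\f_{cd})<\infty$ and that $\ker(\f_{cd})$ is open in $V_c$ (Lemma~\ref{lem:kerim}), which bound how far the trajectory can stray from the ``diagonal'' picture; making the induction on $n$ that tracks $T_n(\f,V_c\oplus F_d)$ modulo $V_c$ precise, and identifying the stabilized constant, is the delicate part, whereas once that is in place both inequalities follow from the already-proven monotonicity properties and Corollary~\ref{lc0}.
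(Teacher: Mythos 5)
Your second inequality contains the right idea, but your first inequality has a genuine gap, and the ``constant $C$'' claim in the second direction needs a hypothesis you did not impose.

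The gap: to get $\ent(\f)\geq\ent(\f_{dd})$ you pass to the largest $\f$-invariant linear subspace $V_c'$ of $V_c$ and invoke the remark after Proposition~\ref{monotonicity} to conclude $\ent(\f)=\ent(\overline\f)$ for the endomorphism induced on $V/V_c'$. That much is fine: $V_c'=\bigcap_{n\geq0}\f^{-n}V_c$ is closed, $\f$-invariant and linearly compact. But the next step --- that the endomorphism induced on $V/V_c'$ ``has the same entropy as $\f_{dd}$'' --- is exactly the statement you are trying to prove, now applied to the l.l.c.\ space $V/V_c'$ with decomposition $(V_c/V_c')\oplus V_d$: its linearly compact part $V_c/V_c'$ is again not invariant under the induced map (by maximality of $V_c'$ it contains no nonzero invariant subspace), and it need not be finite-dimensional, because $V_c'$ is an intersection of infinitely many open subspaces and so need not be open in $V_c$. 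The reduction is therefore circular, and the remark after Proposition~\ref{monotonicity} gives you nothing beyond $\ent(\f)=\ent(\overline\f)$.

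Second point: the bound $\dim\bigl(T_n(\f,U)/U\bigr)\leq\dim\bigl(T_n(\f_{dd},F)/F\bigr)+C$ with $C$ independent of $n$ is false for an arbitrary finite-dimensional $F\leq V_d$: although $\Im(\f_{cd})$ is finite-dimensional, its forward $\f_{dd}$-trajectory can grow linearly in $n$ (take $F=0$, so the right-hand side is $0$, while $H(\f,V_c)$ can well be positive); such leakage cannot be absorbed into a constant. The fix is to restrict to the cofinal subfamily $\{V_c\oplus F\mid \Im(\f_{cd})\leq F,\ \dim F<\infty\}$: there the leakage vanishes entirely and an easy induction, using $\f V_c\leq V_c+\Im(\f_{cd})\leq V_c\oplus F$ and $\f y\in V_c+\f_{dd}y$ for $y\in V_d$, gives the exact identity $T_n(\f,V_c\oplus F)=V_c\oplus T_n(\f_{dd},F)$ for all $n$. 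This yields $H(\f,V_c\oplus F)=H(\f_{dd},F)$ on a cofinal family, hence both inequalities at once via Corollary~\ref{cor:base}, and it also repairs your broken first half. For comparison, the paper works with the same cofinal family but goes through the Limit-free Formula of Section~\ref{lff-sec}, proving $U^{(n)}=V_c\oplus F^{(n)}$ for the backward iterates and applying Proposition~\ref{lf-ent}; your forward-trajectory computation, once corrected as above, is a more elementary alternative that avoids Section~\ref{lff-sec} altogether.
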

\begin{proof} 
By Lemma~\ref{lem:kerim}(a), $\Im(\f_{cd})\in\B(V_d)$; hence, letting $$\B_d=\{F\leq V_d\mid \Im(\f_{cd})\leq F,\dim F<\infty\}\subseteq\B(V_d),$$
Corollary~\ref{cor:base}(b) implies 
\begin{equation}\label{entfdd}
\ent(\f_{dd})=\sup\{H(\f_{dd},F)\mid F\in\B_d\}.
\end{equation}
%\begin{eqnarray}
%\ent(\f)&=&\sup\{H(\f,U)\mid U\in\B(V), V_c\leq U\},\\
%\ent(\f_{dd})&=&\sup\{H(\f_{dd},F)\mid F\in\B_d\}.
%\end{eqnarray}

Let $\mathcal B=\{U\in\BV\mid V_c\leq U\}$, which is cofinal in $\BV$.
For $U\in\B$, since $V_c$ has finite codimension in $U$ by Proposition~\ref{prop:lc properties}(d,e), there exists a finite dimensional linear subspace $F\leq V_d$ such that $U=V_c\oplus F$. Conversely, $V_c\oplus F\in\B$ for every finite dimensional linear subspace $F\leq V_d$.
Hence, $\mathcal B=\{V_c\oplus F\mid  F\in\B(V_d)\}$. Moreover, $\mathcal B'=\{V_c\oplus F\mid F\in\B_d\}$ is cofinal in $\B$ and so in $\BV$.
Thus, Corollary~\ref{cor:base}(b) yields
\begin{equation}\label{entf}
\ent(\f)=\sup\{H(\f,U)\mid U\in\B'\}.
\end{equation}

For $U=V_c\oplus F\in\B'$, as in Definition~\ref{def:uminus} let, for every $n\in\N$,
\begin{eqnarray}
U^{(0)}=U\quad&\mbox{and}&\quad F^{(0)}=F,  \nonumber\\
U^{(n)}=U+\f^{-1}U^{(n-1)}\quad&\mbox{and}&\quad F^{(n)}=F+\f_{dd}^{-1}F^{(n-1)}, \nonumber\\ 
U^-=\bigcup_{n\in\N}U^{(n)}\quad&\mbox{and}&\quad F^-=\bigcup_{n\in\N} F^{(n)}.\nonumber
\end{eqnarray}
Proposition~\ref{lf-ent}, together with \eqref{entf} and \eqref{entfdd} respectively, implies that
\begin{eqnarray}
\ent(\f)&=&\sup\left\{\dim\frac{U^-}{\f^{-1}U^-}\mid U\in\B'\right\},\label{1}\\
\ent(\f_{dd})&=&\sup\left\{\dim\frac{F^-}{\f_{dd}^{-1}F^-}\mid F\in\B_d\right\}.\label{2}
\end{eqnarray}

Let $U=V_c\oplus F\in\B'$. We show by induction on $n\in\N$ that
\begin{equation}\label{eq:EQ}
U^{(n)}= V_c\oplus F^{(n)}\quad \text{for every}\ n\in\N.
\end{equation}
For $n=0$, we have $U^{(0)}=U=V_c\oplus F=V_c\oplus F^{(0)}$. Assume now that $n\in\N$ and that $U^{(n)}=V_c\oplus F^{(n)}$. First note that $U^{(n+1)}=U+\f^{-1}U^{(n)}=U+\f^{-1}(V_c\oplus F^{(n)})$. Moreover, since $\Im(\f_{cd})\leq F\leq F^{(n)}$,
\begin{eqnarray}
\f^{-1}(V_c\oplus F^{(n)})&=&\{(x,y)\in V_c\oplus V_d\mid  \f_{cd}(x)+\f_{dd}(y)\in F^{(n)}\}\nonumber\\
&=&\{(x,y)\in V_c\oplus V_d\mid \f_{dd}(y)\in F^{(n)}\}\nonumber\\
&=&V_c\oplus \f_{dd}^{-1} F^{(n)}.\nonumber
\end{eqnarray}
Thus, $U^{(n+1)}= V_c\oplus F^{(n+1)}$ as required in \eqref{eq:EQ}. 

Now \eqref{eq:EQ} implies that $U^-= V_c\oplus F^-$; moreover, since $\Im(\phi_{cd})\leq F\leq F^-$, $$\f^{-1} U^-=\{(x,y)\in V_c\oplus V_d\mid\phi_{dd}(y)\in F^-\}=V_c\oplus \f_{dd}^{-1}F^-.$$ Therefore, $\frac{U^-}{\phi^{-1}U^-}=\frac{V_c\oplus F^-}{V_c\oplus\phi_{dd}^{-1}F^-}=\frac{F^-}{\phi_{dd}^{-1}F^-}$, so the thesis follows from \eqref{1} and \eqref{2}.
\end{proof}

We are now in position to prove the Addition Theorem.
 
\begin{theorem}[Addition Theorem]\label{thm:ATllc}
Let $V$ be an l.l.c.\! vector space, $\f\colon V\to V$ a continuous endomorphism and $W$ a closed $\phi$-invariant linear subspace of $V$. Then 
$$\ent(\phi)=\ent(\phi\restriction_W)+\ent(\overline\phi),$$
where $\overline\phi:V/W\to V/W$ is the continuous endomorphism induced by $\phi$. 
\end{theorem}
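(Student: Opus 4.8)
The strategy is to reduce the general Addition Theorem to the already-established discrete case, namely \cite[Theorem 5.1]{GBSalce}, by exploiting the structural reduction of Proposition~\ref{prop:resdd}. First I would invoke Theorem~\ref{thm:dec} to write $V=V_c\oplus V_d$ topologically, with $V_c$ linearly compact and $V_d$ discrete, and similarly (after passing to suitable decompositions) arrange compatible decompositions for $W$ and $V/W$. The point of Proposition~\ref{prop:resdd} is that $\ent(\phi)=\ent(\phi_{dd})$ depends only on the discrete component; so the plan is to show that the three quantities $\ent(\phi)$, $\ent(\phi\restriction_W)$, $\ent(\overline\phi)$ are respectively equal to the dimension entropies $\ent_{\dim}$ of the induced endomorphisms on the discrete quotients $V_d$, $W_d$, $(V/W)_d$, and then to apply the discrete Addition Theorem to these.

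The key steps, in order, would be: (1) fix a decomposition $V=V_c\oplus V_d$ and observe via Proposition~\ref{rem:complete}(c) that $W$ is itself l.l.c., hence $W\cong W_c\oplus W_d$ and $V/W\cong (V/W)_c\oplus (V/W)_d$; since $W$ is $\phi$-invariant and closed, $\phi\restriction_W$ and $\overline\phi$ are continuous endomorphisms of l.l.c.\ spaces. (2) Apply Proposition~\ref{prop:resdd} three times to get $\ent(\phi)=\ent(\phi_{dd})$, $\ent(\phi\restriction_W)=\ent((\phi\restriction_W)_{dd})$, $\ent(\overline\phi)=\ent(\overline\phi_{dd})$, where each ``$dd$'' component is an endomorphism of a \emph{discrete} vector space. (3) By Lemma~\ref{entdim}, each of these equals the corresponding dimension entropy $\ent_{\dim}$. (4) Establish that $(\phi\restriction_W)_{dd}$ is (conjugate to) the restriction of $\phi_{dd}$ to a $\phi_{dd}$-invariant subspace $\widetilde W$ of $V_d$, and that $\overline\phi_{dd}$ is (conjugate to) the induced map on $V_d/\widetilde W$ — this is the crux. (5) Conclude by the discrete Addition Theorem \cite[Theorem 5.1]{GBSalce} applied to $\phi_{dd}$ on $V_d$ with invariant subspace $\widetilde W$, together with invariance under conjugation (Proposition~\ref{conj}).

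The main obstacle is step (4): the decompositions $V=V_c\oplus V_d$, $W=W_c\oplus W_d$, $V/W=(V/W)_c\oplus (V/W)_d$ furnished by Theorem~\ref{thm:dec} are not canonical and need not be compatible with the inclusion $W\hookrightarrow V$ and the projection $V\twoheadrightarrow V/W$; one cannot simply assume $W_d\subseteq V_d$. To handle this I would argue at the level of the exact sequence $0\to W\to V\to V/W\to 0$: the subspace $W\cap V_c$ is linearly compact and the image of $W$ in $V_d$ (under $p_d$) differs from a discrete part of $W$ by something finite-dimensional, so up to finite-dimensional corrections — which do not affect entropy, by the monotonicity results of Section~\ref{ss:Properties} and the fact that entropy is unchanged by passing to finite-codimension invariant subspaces (as in the proof of Proposition~\ref{entvalue}) — one gets an induced short exact sequence of discrete spaces $0\to \widetilde W\to V_d\to \widetilde{V/W}\to 0$ with $\phi_{dd}$-equivariant maps, where $\widetilde W$ corresponds to $W_d$ and $\widetilde{V/W}$ to $(V/W)_d$ up to such corrections. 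Alternatively, and perhaps more cleanly, one can bypass the structure theorem entirely and prove the inequality $\ent(\phi)\le\ent(\phi\restriction_W)+\ent(\overline\phi)$ directly from the Limit-free Formula (Proposition~\ref{lf-ent}): choosing $U\in\BV$ with $U=U_W\oplus$ (complement), analyze $U^-$, $\phi^{-1}U^-$ and their images in $W$ and $V/W$ via Proposition~\ref{prop:basis}, obtaining $\dim\frac{U^-}{\phi^{-1}U^-}\le \dim\frac{(U\cap W)^-}{(\phi\restriction_W)^{-1}(U\cap W)^-}+\dim\frac{\overline U^-}{\overline\phi^{-1}\overline U^-}$ from a snake-lemma-type diagram chase; the reverse inequality then follows by combining Proposition~\ref{monotonicity} with Lemma~\ref{ent=H} in the finite case exactly as in the weak Addition Theorem. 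I would present the structure-theorem reduction as the primary route since it makes the hardest ingredient — the full discrete Addition Theorem — a black box, and flag the Limit-free-Formula diagram chase as the fallback for the upper bound.
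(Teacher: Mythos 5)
Your overall route is exactly the paper's: fix $V=V_c\oplus V_d$, use Proposition~\ref{prop:resdd} to replace $\ent(\phi)$, $\ent(\phi\restriction_W)$, $\ent(\overline\phi)$ by the entropies of the ``$dd$'' components on discrete spaces, identify these with $\ent_{\dim}$ via Lemma~\ref{entdim}, and quote the discrete Addition Theorem. You also correctly locate the crux in step (4), the compatibility of the three decompositions. But your proposed resolution of that crux is where the argument has a gap. The principle you invoke --- that passing to invariant subspaces differing by finite-dimensional pieces ``does not affect entropy'' --- is not among the established results and is false as a blanket statement: a finite-dimensional subspace $F$ can have an infinite-dimensional trajectory $T(\phi,F)$ and contribute positive entropy (this is precisely how the Bernoulli shift of Example~\ref{bernoulli} gets $\ent(\beta_\K)=1$ from a one-dimensional $F$). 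One can rescue it by running the discrete Addition Theorem itself on the finite-dimensional quotient $(W+F)/W$ and using Corollary~\ref{lc0}, but you would have to say so, and you would also have to verify that $p_d(W)$ is $\phi_{dd}$-invariant, which it is not: for $w\in W$ one gets $\phi_{dd}(p_dw)=p_d\phi(w)-\phi_{cd}(p_cw)$, so invariance only holds modulo $\Im(\phi_{cd})$, forcing yet another correction.

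The paper avoids all of this by choosing the decompositions in the right order rather than correcting incompatible ones afterwards. Fix $V_c\in\BV$ and set $W_c=W\cap V_c\in\B(W)$; apply Theorem~\ref{thm:dec} to $W$ to get a discrete $W_d$ with $W=W_c\oplus W_d$; since $W_d\cap V_c\leq W_c\cap W_d=0$, one may then choose the discrete complement $V_d$ of $V_c$ in $V$ so that $W_d\leq V_d$. With this choice the inclusion $W_d\hookrightarrow V_d$ is literally $\phi_{dd}$-equivariant, $W_d$ is $\phi_{dd}$-invariant, $(\phi\restriction_W)_{dd}=\phi_{dd}\restriction_{W_d}$ exactly (no finite-dimensional error), and $\pi(V_d)$ is a discrete complement of the open linearly compact $\pi(V_c)$ in $V/W$ with $\overline\phi_{dd}$ conjugate to the endomorphism induced by $\phi_{dd}$ on $V_d/W_d$. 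The discrete Addition Theorem and Proposition~\ref{conj} then finish the proof with no diagram chase. Your fallback via the Limit-free Formula is not needed for the theorem itself (it is already consumed inside Proposition~\ref{prop:resdd}); if you keep the structure-theorem route, replace the ``finite-dimensional corrections'' step by this ordered choice of $W_d$ and $V_d$.
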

\begin{proof} 
Let $V_c\in\BV$ and $W_c=W\cap V_c\in\B(W)$. By Theorem~\ref{thm:dec}, there exists a discrete linear subspace $W_d$ of $W$ such that $W= W_c\oplus W_d$. Let $V_d$ be a linear subspace of $V$ such that $V=V_c\oplus V_d$ and $W_d\leq V_d$. Clearly, $V_d$ is discrete, since $V_c$ is open and $V_c\cap V_d=0$. By construction, the diagram
\begin{equation*}
\xymatrix{0\ar[r]&W_d\ar@/^2pc/[rrr]^{(\phi\restriction_{W})_{dd}} \ar[r]^{\iota^W_d}\ar@{^{(}->}[d]&W\ar[r]^{\f\restriction_W}&W\ar[r]^{p^W_d}&W_d\ar@{^{(}->}[d]\ar[r]&0\\
0\ar[r]&V_d\ar@/_2pc/[rrr]_{\phi_{dd}} \ar[r]^{\iota^V_d}&V\ar[r]^{\f}&V\ar[r]^{p^V_d}&V_d\ar[r]&0}
\end{equation*}
commutes, where $\iota^W_d, \iota^V_d,p^W_d,p^V_d$ are the canonical embeddings and projections of $W$ and $V$, respectively. This yields that $W_d$ is a $\f_{dd}$-invariant linear subspace of $V_d$ and that $$(\f\restriction_W)_{dd}=\f_{dd}\restriction_{W_d}.$$

Now, let $\pi\colon V\to V/W$ be the canonical projection and let $\overline V=V/W$. Let $\overline V_c=\pi(V_c)$ and $\overline V_d=\pi(V_d)$; then $\overline V_c$ is linearly compact and open, while $\overline V_d$ is discrete. Since $\overline V_c$ is open in $\overline V$, we have $\overline V=\overline V_c\oplus\overline V_d$.
%Since $\overline V\cong p(V_c)\oplus p(V_d)$ topologically, let $\bar V_c=p(V_c)$ and $\bar V_d=p(V_d)$ and assume without loss of generality that $. Thus $\bar V$ decomposes as $ \bar V_c\oplus\bar V_d$, where $\bar V_c$ is a linearly compact open subspace and $\bar V_d$ is discrete. 

Clearly, there exists a canonical isomorphism $\alpha\colon \overline V_d\to V_d/W_d$ of discrete vector spaces making  the following diagram
\begin{equation*}
\xymatrix{
\overline V_d\ar@/^2pc/[rrr]^{\overline\f_{dd}} \ar[r]^{\iota^{\overline V}_d}\ar[d]^\alpha&\overline V\ar[r]^{\overline\f}&\overline V\ar[r]^{p^{\overline V}_d}&\overline V_d\ar[d]^\alpha\\
V_d/W_d \ar[rrr]^{\overline{\phi_{dd}}}&&&V_d/W_d
}
\end{equation*}
commute, where  $\overline{\f_{dd}}$ is the endomorphism induced by $\phi_{dd}$.
Then, by Propositions~\ref{prop:resdd} and \ref{conj}, $$\ent(\phi)=\ent(\phi_{dd}),\quad \ent(\f\restriction_W)=\ent(\f_{dd}\restriction_{W_d})\quad \text{and}\quad \ent(\overline{\f})=\ent(\overline{\f_{dd}}).$$
Since $\ent(\phi_{dd})=\ent(\f_{dd}\restriction_{W_d})+\ent(\overline{\f_{dd}})$, in view of the Addition Theorem for $\ent_{\dim}$ (see \cite[Theorem 5.1]{GBSalce}) and Lemma~\ref{entdim}, we can conclude that $\ent(\phi)=\ent(\phi\restriction_W)+\ent(\overline\phi)$.
\end{proof}

\end{document}